\documentclass[12pt,table]{article}

\def\dfrac#1#2{\lower0.15ex\hbox{\large$\frac{#1}{#2}$}}

\usepackage{tikz}

\usepackage{diagbox}
\usepackage[table]{xcolor}

\usepackage{enumerate}
\usepackage{amsmath}
\usepackage{amssymb}
\usepackage{amsthm}

\newtheorem{theorem}{Theorem}[section]
\newtheorem{lemma}[theorem]{Lemma}
\newtheorem{conjecture}[theorem]{Conjecture}

\newtheorem{corollary}[theorem]{Corollary}

\theoremstyle{definition}

\usepackage{mathtools}

\def\Det{\textup{det}}

\def\pairing{\Omega}  
\def\opairing{\Gamma} 

\def\dfrac#1#2{\lower0.15ex\hbox{\large$\frac{#1}{#2}$}}

\usepackage{hyperref}  
\usepackage[bb=dsserif]{mathalpha}
\usepackage{bm}

\def\coloneq{\coloneqq} 

\allowdisplaybreaks

\def\Exp{\mathbf{E}}
\def\Pr{\mathbf{P}}

\usepackage[english]{babel}

\title{Jaeger-type orientations of random regular graphs}

\author{Catherine Greenhill\thanks{Supported by Australian Research Council Discovery Project DP250101611.}\\
\small School of Mathematics and Statistics\\[-0.5ex]
\small UNSW Sydney\\[-0.5ex]
\small NSW 2052, Australia\\
\small c.greenhill@unsw.edu.au
\and Mikhail Isaev${}^*$ \\
\small School of Mathematics and Statistics\\[-0.5ex]
\small UNSW Sydney\\[-0.5ex]
\small NSW 2052, Australia\\
\small m.isaev@unsw.edu.au
\and Charles Lewis\\
\small UNSW Sydney\\[-0.5ex]
\small NSW 2052, Australia\\
\small charles.lewis@student.unsw.edu.au
}

\date{24 April 2026}

\begin{document}

\maketitle

\begin{abstract}
We consider $p$-orientations, which are defined to be orientations of $d$-regular graphs such that every vertex either has in-degree $p$ or out-degree $p$.  These generalise the orientations considered in Jaeger's conjecture, where $d=4p+1$. Working with random $d$-regular graphs using the small subgraph conditioning method, we prove that a $d$-regular graph has a $p$-orientation with high probability for several values of $(d,p)$, including the $p=3,4$ cases of
Jaeger's conjecture (known to be deterministically false). Some negative results are obtained by 
exploiting a connection with maximum bisection size.
\end{abstract}

\section{Introduction}\label{s:introduction}

Our work starts with Jaeger's conjecture~\cite{jaeger}, which can be stated as follows.
\begin{conjecture}
  Let \(p\in \mathbb{N} \). Every \((4p+1)\)-regular \(4p\)-edge-connected graph has an orientation such that the
  in-degree of every vertex is either \(p\) or \(3p+1\).
\end{conjecture}
Such orientations are sometimes called \textit{valid orientations}, for instance in \cite{pralat15}, but we will call them \textit{Jaeger orientations}.
The $p=1$ case of Jaeger's conjecture corresponds to Tutte's nowhere-zero 3-flow conjecture (see for example~\cite[Open Problem 48]{bondymurty}).

The $p=1$ and $p=2$ cases of Jaeger's conjecture
(including Tutte's nowhere-zero 3-flow conjecture) are still open.
However, Han et al.~\cite{HAN20181} produced counterexamples to Jaeger's conjecture for all $p\geq 3$. 
Are these counterexamples rare?  Or would a typical $(4p+1)$-regular $4p$-edge-connected graph be expected to admit a Jaeger orientation?
To investigate this, we can study random $(4p+1)$-regular graphs, which are known
to be $(4p+1)$-connected (and hence $4p$-edge-connected) with high probability~\cite{wormald81}.

Let $\mathcal{G}_{n,d}$ denote the uniform model of random $d$-regular graphs on the vertex set $[n]:=\{1,2,\ldots, n\}$.
We say that an event holds asymptotically almost surely (a.a.s.) in $\mathcal{G}_{n,d}$ if the
probability that the event holds tends to~1 as $n$ tends to infinity.
Pra{\l}at and Wormald~\cite{pralat15} proved that Jaeger's conjecture holds a.a.s.\ for the case $p=1$, and the corresponding result for $p=2$ was established by Delcourt, Huq and Pra{\l}at~\cite{delcourt25}, both using the small subgraph conditioning method (SSCM). Using a completely different approach involving the Expander Mixing Lemma,  
Alon and Pra{\l}at~\cite{alon11} proved that Jaeger's conjecture holds a.a.s.\ when $p\geq p_0$, where $p_0$ is a large but unspecified constant. We conjecture that this behaviour holds for all~$p$.

\begin{conjecture}
\label{c:jaeger-aas}
For all $p\geq 1$, Jaeger's conjecture a.a.s.\ holds. That is, for any fixed $p\geq 1$, a.a.s.\ $\mathcal{G}_{n,4p+1}$ has a Jaeger orientation.
\end{conjecture}

We now define a generalisation of the orientations considered in Jaeger's conjecture.
A \emph{\(p\)-orientation} of a \(d\)-regular graph \(G\) is an orientation of \(G\) such that every vertex has in-degree either \(p\) or \(d-p\).  (Equivalently, every vertex either has in-degree $p$ or out-degree $p$.) A $p$-orientation is a Jaeger orientation if $d=4p+1$.  We will use the small subgraph conditioning method, and other arguments, to study the a.a.s.\ existence of $p$-orientations in $\mathcal{G}_{n,d}$. As we see in Section~\ref{ss:bisection}, if a graph has a $p$-orientation then it has an even number of vertices. Hence our asymptotics are as $n\to\infty$ along the positive even integers, unless specified otherwise.

Our main result is the following.
\begin{theorem}
  \label{thm:small-dp}
  Let $d\geq 3$ and $p\geq 1$ be constants.
  \vspace*{-0.5\baselineskip}
\begin{itemize}
\item[\emph{(i)}] If $\binom{d}{p} \leq  2^{d/2-1}$ then a.a.s.\ 
   \(G\in\mathcal{G}_{n,d}\) has no \(p\)-orientation.
\item[\emph{(ii)}]  If $p\in \{1,2,3,4\}$ and 
\begin{equation}
    \label{eq:inequality}
    \big(d^2-4dp+4p^2-d\big)^2 < d^2(d-1)
\end{equation}
  then a.a.s. \(G\in\mathcal{G}_{n,d}\) has a \(p\)-orientation.
\end{itemize}
\end{theorem}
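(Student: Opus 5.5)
For part (i) I will use the bisection connection of Section~\ref{ss:bisection}. Given a $p$-orientation, let $A$ be the set of vertices of in-degree $p$ and $B$ the set of vertices of in-degree $d-p$. Counting arcs gives $|A|p+|B|(d-p)=nd/2=|A|(d-p)+|B|p$, so $|A|=|B|=n/2$ when $d\neq 2p$. The pair $(A,B)$ is therefore a bisection.

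Rather than bound the cut size, I will run a first moment count directly in the configuration model, which transfers to $\mathcal{G}_{n,d}$ by contiguity/simplicity. An orientation is the same as a choice, at each vertex, of which points are ``in''. A $p$-orientation is specified by three things: choosing $A$ (about $2^n$ ways), choosing at each vertex which $p$ points form the minority side (giving $\binom dp^n$), and a pairing that matches every ``in'' point to an ``out'' point. There are $nd/2$ in-points and $nd/2$ out-points, so the number of such pairings is $(nd/2)!$. The total number of pairings is $(nd-1)!!$. Hence
\[
\Exp X \approx 2^n\binom dp^n \frac{(nd/2)!}{(nd-1)!!}=\mathrm{poly}(n)\Big(2\binom dp 2^{-d/2}\Big)^n .
\]
This tends to $0$ when $\binom dp<2^{d/2-1}$, and Markov's inequality finishes the strict case. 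The boundary case of equality needs the polynomial factor: the Stirling-level terms give roughly $n^{-1/2}$ decay from the balance constraint, and I expect this to be the delicate point of (i).

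For part (ii) I will apply the small subgraph conditioning method to $Y$, the number of $p$-orientations, in the pairing model. The first moment is computed exactly as above, giving $\Exp Y\sim c\,(2\binom dp2^{-d/2})^n n^{\alpha}$. The cycle counts $X_k$ have $\lambda_k=(d-1)^k/(2k)$. The factor $\delta_k$ comes from a transfer-matrix computation along a $k$-cycle, with states given by the vertex type ($A$ or $B$) together with the orientation of the two cycle edges. This should give $\delta_k=\sum_i (\mu_i/\ldots)^k$, where the nontrivial eigenvalue ratio is $\mu=(d^2-4dp+4p^2-d)/(d(d-1))$ after normalisation. Then $\sum\lambda_k\delta_k^2<\infty$ requires $(d-1)\mu^2<1$, which is exactly \eqref{eq:inequality}.

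The second moment is the main obstacle. Pairs of $p$-orientations are classified by overlap variables: the fractions of vertices in each of the four type combinations, the intersection sizes of their minority point sets, and the pair-types of edges. This yields $\Exp Y^2=\sum \exp(n f(\cdot)+o(n))$. I must show that $f$ has its unique global maximum at the ``independent'' point, and then perform Laplace summation to get $\Exp Y^2/(\Exp Y)^2\to\exp(\sum\lambda_k\delta_k^2)$. After optimising out the edge variables, the maximisation reduces to a finite-dimensional problem indexed by the overlap $j$ at each vertex ($0\le j\le p$). This is why the statement restricts to $p\le4$: for these $p$ I would verify global maximality with a combination of analytic reduction (symmetry, and Lagrange equations reduced to one parameter) and a rigorous, possibly computer-assisted, check over the compact parameter region. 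Local maximality at the centre is where \eqref{eq:inequality} enters via the Hessian, matching the $\delta_k$ condition. Having both moments, the SSCM theorem gives $Y>0$ a.a.s. in the pairing model, and this transfers to $\mathcal{G}_{n,d}$ by conditioning on no loops or multiple edges.
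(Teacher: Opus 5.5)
Your part (i) fails at the equality case. Your first-moment count agrees with the paper's, but the claimed $n^{-1/2}$ decay does not exist. The factor $\binom{n}{n/2}\sim 2^n\sqrt{2/(\pi n)}$ is exactly cancelled by $(dn/2)!/(dn-1)!!=2^{dn/2}/\binom{dn}{dn/2}\sim\sqrt{\pi dn/2}\,2^{-dn/2}$. So $\Exp Y\sim\sqrt d\,\bigl(2^{1-d/2}\binom dp\bigr)^n$, with no decaying polynomial factor.

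Equality $\binom dp=2^{d/2-1}$ with $p<d/2$ occurs only at $(d,p)=(8,1)$. There $\Exp Y\to\sqrt8$, so Markov's inequality gives nothing, and the statement's ``$\leq$'' case stays unproved. A new idea is needed; the paper's is a planted-model argument:
\begin{itemize}
\item Consider the uniform space of pairs $(F,O)$ with $O$ a $1$-orientation of $F$. The number $W_r$ of directed $r$-cycles has mean and variance $\sim\mu_r=\frac1r\bigl(2p(d-p)/d\bigr)^r$.
\item Since $\mu_r\to\infty$ as $r\to\infty$, Chebyshev shows that a.a.s.\ there are more than $k$ directed cycles, for any fixed $k$.
\item Reversing a directed cycle preserves every in-degree, so it yields a new $1$-orientation. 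Hence if $1\le Y(F)\le k$, every $1$-orientation of $F$ has at most $k$ directed cycles.
\item This gives $\Pr(1\le Y\le k)\le \Exp Y\cdot o(1)=o(1)$, and so $\Pr(Y>0)\le \Exp Y/k+o(1)=\sqrt8/k+o(1)$ for every $k$.
\end{itemize}

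For (ii), your SSCM outline matches the paper: the values of $\lambda_i$ and $|\delta_i|$, the equivalence of (A3) with \eqref{eq:inequality}, and Laplace summation with the Hessian. However, the decisive step is only promised: that $\varphi$ on the $(4p+1)$-dimensional overlap polytope has its unique global maximum at the independent point. You also propose to verify this for every pair with $p\le4$ satisfying \eqref{eq:inequality}. That is $27$ pairs with $p<d/2$, including the degenerate cases $d=4p$ (where $\partial\varphi/\partial z$ loses its $(d-4p)\log(A/B)$ term) and many with $d<4p$. Nothing in your proposal establishes this, and the paper explicitly leaves the general case open.

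The missing ingredient is the reduction of Lemma~\ref{lem:list}. The paper proves the unique maximum only for $(6,1),(10,2),(13,3),(14,3),(17,4)$:
\begin{itemize}
\item interior critical points correspond to roots of an explicit polynomial $f$ on $(1,\infty)$;
\item faces with $0<z<1/2$ are excluded by a $\log\varepsilon$ directional-derivative argument;
\item the face $z=0$ reduces to a one-variable function, checked using Descartes' rule and the mean value theorem.
\end{itemize}
It then obtains every other affirmative pair from:
\begin{itemize}
\item Hamilton cycles in $\mathcal{G}_{n,3}$ and Hamilton decompositions of $\mathcal{G}_{n,4}$;
\item the known cases $(5,1)$ and $(9,2)$;
\item Euler tours for $(2p,p)$, and a perfect matching plus an Euler tour for $(2p+1,p)$;
\item the symmetry $p\leftrightarrow d-p$;
\item contiguity of $\mathcal{G}_{n,d+2}$ with $\mathcal{G}_{n,d}$ plus a random Hamilton cycle, which lifts $(d,p)$ to $(d+2,p+1)$.
\end{itemize}

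Finally, for $d=2p$ every vertex is simultaneously an in- and out-vertex, so your setup for $Y$ does not apply. That case is deterministically affirmative, but it must be separated.
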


In particular, this theorem implies that even though counterexamples to Jaeger's conjecture exist when $p\in \{3,4\}$, see~\cite{HAN20181},  they are very rare.

\begin{corollary}
Jaeger's conjecture holds a.a.s.\ when $p=3$ or $p=4$.
That is, $\mathcal{G}_{n,13}$ a.a.s.\ has a 3-orientation and $\mathcal{G}_{n,17}$ a.a.s.\ has a 4-orientation.
\end{corollary}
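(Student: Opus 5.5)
The corollary follows from Theorem~\ref{thm:small-dp}(ii) by substituting $d=4p+1$ for $p=3$ and $p=4$ and checking inequality~\eqref{eq:inequality}. Random $(4p+1)$-regular graphs are a.a.s.\ $4p$-edge-connected by~\cite{wormald81}. Since a Jaeger orientation is exactly a $p$-orientation when $d=4p+1$, it then suffices to know that $\mathcal{G}_{n,4p+1}$ a.a.s.\ has a $p$-orientation. As noted before the theorem, asymptotics are taken along even $n$.

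With $d=4p+1$ the left-hand side of~\eqref{eq:inequality} simplifies. We have $d^2-4dp+4p^2 = (d-2p)^2 = (2p+1)^2$, so
\[
d^2-4dp+4p^2-d = (2p+1)^2-(4p+1) = 4p^2 .
\]
Hence~\eqref{eq:inequality} becomes $16p^4 < (4p+1)^2\cdot 4p$, or equivalently $4p^3 < (4p+1)^2$.

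For $p=3$ this reads $108 < 169$, which holds. Directly, the left side of~\eqref{eq:inequality} is $36^2=1296$ and the right side is $169\cdot 12=2028$. For $p=4$ it reads $256 < 289$, which also holds. Directly, the left side is $64^2=4096$ and the right side is $289\cdot 16=4624$. Since $p\in\{3,4\}\subseteq\{1,2,3,4\}$, Theorem~\ref{thm:small-dp}(ii) applies and gives the claim.

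There is no real obstacle here; the only step needing care is the algebraic simplification. It is worth noting that the condition fails at $p=5$, where $500 > 441$. This explains why the corollary stops at $p=4$, and is consistent with the general result of~\cite{alon11} being needed for large $p$.
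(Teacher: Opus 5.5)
Your proposal is correct and follows the paper's route: the corollary is read off from Theorem~\ref{thm:small-dp}(ii), and your reduction of \eqref{eq:inequality} at $d=4p+1$ to $4p^3<(4p+1)^2$ is accurate. In the paper, $(13,3)$ and $(17,4)$ are also among the base pairs in \eqref{eq:list}, which are handled directly by the small subgraph conditioning method rather than by the reductions in Lemma~\ref{lem:list}.
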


\begin{table}[ht!]
\begin{center}
\renewcommand{\arraystretch}{1.2}
  \begin{tabular}{|c|c|c|c|c|c|c|c|c|c|c|c|c|c|c|c|c|c|c|}
  \hline
    \backslashbox{\(p\)}{\(d\)} & 3 & 4 & 5 & 6 & 7 & 8 & 9 & 10 & 11 & 12 & 13 & 14 &  15 & 16 & 17 & 18 & 19 & 20\\ \hline
    1 & Y & Y & \cellcolor{gray!15}Y & Y & ? & N & N & N & N & N & N & N & N & N & N & N & N & N \\ \hline
    2 & Y & Y & Y & Y & Y & Y & \cellcolor{gray!15}Y & Y & ? & ? & ? & N${}^*$ & N${}^*$ & N & N & N & N & N\\ \hline
    3 & $\blacksquare$  & Y & Y & Y & Y & Y & Y & Y & Y & Y & \cellcolor{gray!15}Y & Y & ? & ? & ? & ? & N${}^*$ & N${}^*$\\ \hline
    4 & $\blacksquare$  & $\blacksquare$ & Y & Y & Y & Y & Y & Y & Y & Y & Y & Y & Y & Y & \cellcolor{gray!15}Y & ? & ? & ?\\ \hline
  \end{tabular}
\caption{Summary of our results: ``Y'' means that a.a.s.\ $\mathcal{G}_{n,d}$ has a $p$-orientation; ``N'' means that a.a.s.\ $\mathcal{G}_{n,d}$ has no $p$-orientation, by Theorem~\ref{thm:small-dp}(i); ``N${}^*$'' means that Theorem~\ref{thm:small-dp}(i) does not apply but a.a.s.\ $\mathcal{G}_{n,d}$ does not have large enough maximum bisection size to have a $p$-orientation (see Section~\ref{ss:bisection}). A black square denotes infeasible parameters.  The highlighted (grey) cell in each row corresponds to the Jaeger conjecture. } 
\label{t:results}
\end{center}
\end{table}

The positive results from Theorem~\ref{thm:small-dp}(ii) are displayed in Table~\ref{t:results}, together with some relevant negative results. 
We see from Table~\ref{t:results} that for $p=1,2,3$, the value of $d$ from the Jaeger conjecture ($d=4p+1)$ is not the largest value of $d$ for which $\mathcal{G}_{n,d}$ a.a.s.\ has a  $p$-orientation.  (See end of Section~\ref{ss:bisection} for more discussion.)

We will prove some key positive results (see Lemma~\ref{lem:list})
using the SSCM, working in the configuration model.
Specifically, we will analyse the random variable $Y$ defined
to be the number of $p$-orientations of a random configuration.
The inequality (\ref{eq:inequality}) in Theorem~\ref{thm:small-dp}(ii) is a technical condition which is required
for the SSCM to apply to the random variable $Y$.

We know from~\cite{alon11} that Jaeger's condition holds a.a.s.\ for all $p$ sufficiently large. Indeed, Alon and Pra{\l}at's argument from~\cite{alon11} can be easily modified 
(in particular by using~\cite[Theorem~2.3]{LSWZ} rather than~\cite[Corollary~2.2]{LSWZ})
to prove that there exists some sufficiently large positive integer $d_0$ such that
$\mathcal{G}_{n,d}$ a.a.s.\ has a $p$-orientation for all fixed $d\geq d_0$
and $(d-1)/4\leq p\leq d/2$.

For these reasons, we restrict our attention to $p\in \{1,2,3,4\}$ and $d\leq 20$, other than some brief remarks at the end of Section~\ref{ss:bisection}.
However, we conjecture that $\mathcal{G}_{n,d}$ a.a.s.\ has a $p$-orientation for any $d\geq 3$ and $p$ which satisfy (\ref{eq:inequality}).
Indeed, we perform almost all calculations for the SSCM in
generality, with the exception of the proof of Lemma~\ref{eqn:hard-phi-maximum-lemma}.

The structure of the paper is as follows.
After discussing connections with maximum bisection size in Section~\ref{ss:bisection}, the configuration model and the SSCM are introduced in Section~\ref{s:preliminaries}. 
The expected number of $p$-orientations in the configuration
model is calculated in Lemma~\ref{lem:the-first-mom}. Combining this with a special case analysed in Lemma~\ref{l:dp81} establishes
Theorem~\ref{thm:small-dp}(i).  The remainder of the paper
is devoted to using the SSCM to prove that a.a.s\ $\mathcal{G}_{n,d}$
has a $p$-orientation for all
$(d,p)$ in \eqref{eq:list}.
Working in the configuration model, the impact of short cycles is analysed in Section~\ref{s:short-cycles} and
the second moment is investigated in Section~\ref{s:mom2}, with  the proof of some
technical results deferred to Appendix~\ref{s:hard}.
The proof of Theorem~\ref{thm:small-dp}(ii) can be
found at the end of Section~\ref{ss:laplace}.

Before continuing, we make some elementary observations.
If $d=2p$ then a $p$-orientation is an Eulerian orientation, which exists for every $d$-regular graph. Here (\ref{eq:inequality}) holds and so Theorem~\ref{thm:small-dp}(ii) is true (deterministically) when $p=d/2$. Hence we can assume that $d\neq 2p$. Furthermore
 every $(d-p)$-orientation is also
a $p$-orientation, so we may assume that $p < d/2$.  
It is not clear whether the property of a.a.s.\ existence of $p$-orientations is monotonically decreasing with respect to $d$, though this seems likely.

\subsection{Connections with maximum bisection size}\label{ss:bisection}

If a graph $G$ has an even number of vertices then a \emph{bisection} is a partition of the vertices into two sets of equal size, and the \emph{size} of the bisection is the number of edges in $G$ which go between the two parts.
The maximum bisection size in $G$ is the maximum size of any bisection of $G$,
which equals the maximum number of edges in any bipartite subgraph of $G$.

Now assume that $p<d/2$ and fix a $p$-orientation of a graph $G$ on vertex set $[n]$. Let $U$ be the set of vertices with in-degree $p$ and let $W$ be the set of vertices with out-degree $p$.  Then $|U|=|W|$, as can
be seen by double-counting the edges of $G$, first by in-degree and then by out-degree.
Hence a $p$-orientation determines a bisection of $G$, and in particular,
if $G$ has a $p$-orientation then $G$ has an even number of vertices.
The maximum number of edges which can be oriented from $W$ to $U$ is $pn/2$, which
is the total in-degree capacity of $U$.  Hence at least $(d-p)n/2 - pn/2 = (d-2p)n/2$ edges must be
oriented from $U$ to $W$.
Therefore, if $\mathcal{G}_{n,d}$ a.a.s.\ has a $p$-orientation and $p < d/2$ then
the maximum bisection size of $\mathcal{G}_{n,d}$ is a.a.s.\ at least $nd/2-pn$, which is a $1-2p/d$ fraction
of the total number of edges.

Coja-Oghlan et al~\cite[Corollary~1.3]{MaxBisection} gave an formula for an a.a.s.\ upper bound on the maximum cut size of 
$\mathcal{G}_{n,d}$, presented as a fraction of the total number of edges $dn/2$.
In particular, this gives an a.a.s.\ upper bound on the maximum bisection size of $\mathcal{G}_{n,d}$. We can use their formula to show that a.a.s.\ $\mathcal{G}_{n,d}$ has no
$p$-orientations for the following pairs $(d,p)$:

\vspace*{-0.5\baselineskip}
\begin{itemize}
\item $(14,2)$, since $1-4/14 \approx 0.7142$ is larger than the a.a.s.\ upper bound  $0.7028$ for $\mathcal{G}_{n,14}$.
\item $(15,2)$, since $1-4/15 \approx 0.7333$  is larger than the a.a.s.\ upper bound $0.6965$ for $\mathcal{G}_{n,15}$.
\item $(19,3)$, since $1-6/19 \approx 0.6842$ is larger than the a.a.s.\ upper bound  $0.6749$ for $\mathcal{G}_{n,19}$.
\item $(20,3)$, since $1-6/20 =0.7$ is larger than the a.a.s.\ upper bound $0.6703$ for $\mathcal{G}_{n,20}$.
\end{itemize}
These negative results do not follow from our first moment argument
Theorem~\ref{thm:small-dp}(i).  In particular, when $p=2$ the smallest value of $d$ for which we can deduce
a negative result from Theorem~\ref{thm:small-dp}(i) is $d=16$, while when $p=3$ the smallest value of $d$ for which we can deduce
a negative result from Theorem~\ref{thm:small-dp}(i) is $d=24$. 

A recent paper by Harangi~\cite{harangi-2RSB} gave 2-RSB upper bounds on the maximum cut size for $d=3,\ldots, 7$,
which improve on those from~\cite{MaxBisection}, together with a link to SAGE code for investigations for higher degrees. The improved bound for $d=7$ does not imply a negative result for the case $(d,p)=(7,1)$, and unfortunately we were not able to
resolve any unknown cases in Table~\ref{t:results} using the code.

It is possible that the a.a.s.\ existence of $p$-orientations in $\mathcal{G}_{n,d}$ is essentially determined by this connection with maximum bisection size.
Let $p^*(d)$ denote the minimum value of $p\in \{1,2,\ldots, \lfloor d/2\rfloor \}$ such that $\mathcal{G}_{n,d}$ a.a.s.\ has a $p$-orientation.
Further, define 
\[ p_{\text{bis}}(d):= \min \big\{\,  p \mid  1\leq p < d/2\,\,\, \text{and a.a.s.}\,\, \mathcal{G}_{n,d}\,\,\,  \text{has a bisection of size } \geq (d-2p)n/2\big\}.\]
We make the following conjecture, which is consistent with our data.

\begin{conjecture}
\label{our-conjecture}
For all $d\geq 3$ we have $p^*(d)\in \{ p_{\text{\emph{bis}}}(d),\, p_{\text{\emph{bis}}}(d) + 1\}$. 
\end{conjecture}

If (\ref{eq:inequality}) does not hold then SSCM method cannot be applied to the random variable $Y$ (defined in Section~\ref{s:SSCM}), because the variance of $Y$ becomes too large. 
One approach that could be used to tackle Conjecture~\ref{our-conjecture} would be to apply the SSCM to a different
random variable with smaller variance. 
Alternatively, it may be possible to further exploit the connection between maximum bisection size and existence of $p$-orientations, which is reminiscent of the
connection between the independence ratio and existence of star decompositions discussed in~\cite{stars,harangi1}.
In particular, it might be possible to adapt some of the ideas from
Harangi~\cite{harangi1, harangi2} or Gerencs{\' e}r and Harangi~\cite{GH} on the connection between independence number,
orientations and cuts.

\bigskip

To conclude this section we discuss the behaviour of $p^*(d)$ for large~$d$.
It follows from the proof of Bollob{\' a}s~\cite[Theorem~1]{bollobas1988} that the
expected number of bisections of size $(1-\eta) dn/4$ in $\mathcal{G}_{n,d}$
tends to zero whenever
\begin{equation}
\label{eq:bollobas}
 (1-\eta)\log(1-\eta) + (1+\eta)\log(1+\eta) > \frac{4\log 2}{d}.
\end{equation}
Here and throughout the paper, $\log$ denotes the natural logarithm.
(Note that Bollob{\' a}s works in the configuration model but the conclusion for $\mathcal{G}_{n,d}$
holds using standard arguments, see Section~\ref{s:preliminaries} and (\ref{eqn:pndsimple}).)
The left hand side of (\ref{eq:bollobas}) is bounded below by $\eta^2$ (and when $\eta$ is
small, $\eta^2$ is a good approximation).
Applying
Markov's Lemma implies that a.a.s.\ no bisections of size $(1-\eta)dn/4$ exist when
$\eta^2 > \frac{4\log 2}{d}$.
Setting $(1+\eta)/2 = 1-2p/d$, from this we can conclude that 
\[ p^*(d) > \frac{d}{4} - \frac{\sqrt{d\log 2}}{2},\]
at least when $d$ is large. 
Combining this with the analogue of Alon and Pra{\l}at's result from~\cite{alon11}, discussed earlier, this implies that
\[ \lim_{d\to\infty} \frac{p^*(d)}{d} = \frac{1}{4},\]
which asymptotically matches the Jaeger case $d=4p+1$.

\section{Preliminaries}\label{s:preliminaries}

We say that the pair $(d,p)$ is \emph{affirmative} if a.a.s.\ 
  \(G\in\mathcal{G}_{n,d}\) has a \(p\)-orientation.  First we show how to reduce
 Theorem~\ref{thm:small-dp}(ii) to a small number of cases.

\begin{lemma}\label{lem:list}
Theorem~\emph{\ref{thm:small-dp}(ii)} holds if the following
pairs are affirmative:
\begin{equation}
\label{eq:list}
(d,p)\in\{(6,1),\, (10,2),\, (13,3),\, (14,3),\, (17,4)\}.  
\end{equation}
\end{lemma}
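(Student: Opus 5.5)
The plan is to reduce every pair $(d,p)$ covered by Theorem~\ref{thm:small-dp}(ii) to the listed pairs, to the previously known cases of Jaeger's conjecture, or to trivial cases. The reduction uses one deterministic lifting step together with the contiguity of random regular graphs.

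\textbf{Step 1: which pairs must be covered.} Write the left side of (\ref{eq:inequality}) as $\big((d-2p)^2-d\big)^2$ and check it against $d^2(d-1)$. Recall that we may assume $p<d/2$, since $d=2p$ is the Eulerian case, and $p\ge 1$.
\begin{itemize}
\item For $p=1$ the inequality holds exactly for $3\le d\le 6$; at $d=7$ we get $18^2=324>294$.
\item For $p=2$ it holds exactly for $5\le d\le 10$; at $d=11$ we get $38^2>1210$.
\item For $p=3$ it holds exactly for $7\le d\le 14$; at $d=15$ we get $66^2>3150$.
\item For $p=4$ it holds exactly for $9\le d\le 17$; at $d=18$ we get $82^2>5508$.
\end{itemize}
In each case the left side is increasing in $d$ faster than the right side, so only the boundary checks matter.

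\textbf{Step 2: lifting lemma.} If $H$ is $(d-2)$-regular with a $p$-orientation and $C$ is a Hamilton cycle edge-disjoint from $H$, orient $C$ cyclically. Every vertex then gains exactly one in-edge and one out-edge. So in $H\cup C$ each in-degree lies in $\{p+1,\,d-2-p+1\}=\{p+1,\,d-(p+1)\}$, which is a $(p+1)$-orientation.

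Now use the standard contiguity $\mathcal{G}_{n,d}\approx \mathcal{G}_{n,d-2}\oplus\mathcal{H}_n$ (random regular graph plus an edge-disjoint random Hamilton cycle; Janson, Robinson--Wormald, Kim--Wormald). Contiguity transfers a.a.s.\ statements. Hence if $(d-2,p)$ is affirmative then so is $(d,p+1)$. The same argument works when the base is itself a contiguous sum such as $\mathcal{G}_{n,1}\oplus\mathcal{H}_n\approx\mathcal{G}_{n,3}$; here $n$ is even throughout.

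\textbf{Step 3: base cases and chains.}
\begin{itemize}
\item $p=1$. For $(3,1)$: a perfect matching oriented arbitrarily is a $0$-orientation of a $1$-regular graph, and $\mathcal{G}_{n,3}\approx\mathcal{G}_{n,1}\oplus\mathcal{H}_n$, so Step 2 gives $(3,1)$. For $(4,1)$: an even Hamilton cycle oriented alternately has all in-degrees in $\{0,2\}$, which is a $0$-orientation; combined with $\mathcal{G}_{n,4}\approx\mathcal{H}_n\oplus\mathcal{H}_n$, Step 2 gives $(4,1)$. The pair $(5,1)$ is the Pra{\l}at--Wormald theorem~\cite{pralat15}, and $(6,1)$ is listed.
\item $p=2$. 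The pairs $(5,2),\dots,(8,2)$ come from $(3,1),\dots,(6,1)$ via Step 2. The pair $(9,2)$ is~\cite{delcourt25}, and $(10,2)$ is listed.
\item $p=3$. The pairs $(7,3),\dots,(12,3)$ come from $(5,2),\dots,(10,2)$. The pairs $(13,3)$ and $(14,3)$ are listed.
\item $p=4$. The pairs $(9,4),\dots,(16,4)$ come from $(7,3),\dots,(14,3)$. The pair $(17,4)$ is listed.
\end{itemize}

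\textbf{Main obstacle.} The main care point is the contiguity input. I must make sure the decompositions used (for example $\mathcal{G}_{n,d}\approx\mathcal{G}_{n,d-2}\oplus\mathcal{H}_n$ for all $d\ge 3$, and the base decompositions for $d=3,4$) are available in the literature in exactly this form along even $n$. I also need that ``a.a.s.\ in $\mathcal{G}_{n,d-2}$'' transfers to the superposition. This transfer holds because the superposed model conditioned on the first summand is just a uniformly random compatible Hamilton cycle, and the lifting works for every such cycle.
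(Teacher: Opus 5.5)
Your proposal is correct and follows essentially the paper's route. It uses the same base cases $(3,1)$, $(4,1)$, $(5,1)$, $(9,2)$ with the same Hamilton-cycle constructions for $d=3,4$, the same lifting $(d-2,p)\Rightarrow(d,p+1)$ via contiguity of $\mathcal{G}_{n,d}$ with $\mathcal{G}_{n,d-2}\oplus\mathcal{H}_n$, and the same check of which pairs satisfy \eqref{eq:inequality}. The differences are cosmetic. You reach $(2p+1,p)$ by chaining up from $(3,1)$ rather than by the paper's perfect-matching-plus-Euler-tour argument. You should also say explicitly that pairs with $d/2<p<d$ reduce to $(d,d-p)$ because \eqref{eq:inequality} is invariant under $p\mapsto d-p$, and that pairs with $p\ge d$ violate \eqref{eq:inequality}.
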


\begin{proof}
It is easy to check that for $p\in \{1,2,3,4\}$, the only pairs $(d,p)$ which satisfy (\ref{eq:inequality}) are those which have a `Y' entry in Table~\ref{t:results}.
First we prove that $(d,p)=(3,1)$ is affirmative. We know that $\mathcal{G}_{n,3}$ 
a.a.s.\ contains a Hamilton cycle~\cite{RW92}. Given a cubic graph $G$ with a Hamilton cycle $H$,
we can orient $H$ to form a directed Hamilton cycle, then orient the perfect matching $G \setminus H$ arbitrarily, giving a 1-orientation of $G$.  Hence $(d,p)=(3,1)$ is affirmative.  
Similarly, the edges of $\mathcal{G}_{n,4}$ can a.a.s.\ be partitioned into two Hamilton cycles~\cite{KW},
and as $n$ is even we can orient one Hamilton cycle to give a directed cycle, and orient the other in an
alternating fashion so that half of the vertices are sinks and half are sources. Overall this
shows that $(4,1)$ is affirmative.
Also note that $(5,1)$ is affirmative~\cite{pralat15} and $(9,2)$ is affirmative~\cite{delcourt25}.

All Eulerian cases $(2p,p)$ are affirmative, as can be seen by orienting an Eulerian cycle.  
By deleting a perfect matching from a $(2p+1)$-regular graph we obtain an Eulerian graph,
and random regular graphs with degree at least three a.a.s\ contain a perfect matching~\cite{RW94},
so the $(2p+1,p)$ case is affirmative. 

Finally, we know that $\mathcal{G}_{n,d+2}$ is contiguous with the superposition of a random Hamilton
cycle and $\mathcal{G}_{n,d}$ when $d\geq 3$, see~\cite{janson95}.  If $\mathcal{G}_{n,d}$ a.a.s.\ has a $p$-orientation
then, by orienting the Hamilton cycle we obtain a $(p+1)$-orientation of $\mathcal{G}_{n,d+2}$ a.a.s..  Hence $(d+2,p+1)$ is affirmative whenever $(d,p)$ is affirmative. These arguments, together with the fact that $(d,p)$ is affirmative if and only if $(d,d-p)$ is affirmative, allow us to deduce all remaining affirmative cases in Table~\ref{t:results}.  
\end{proof}

We will perform our calculations in the configuration model. Take $dn$ points arranged
into $n$ cells, where each cell contains $d$ points.  All points are labelled,
and the cells are labelled with $1,\ldots, n$.  We assume that $dn$ is even.
A \emph{pairing} is a partition of the $dn$ points into pairs.  Let $\pairing_{n,d}$
denote the set of all such pairings, and let
$\mathcal{P}_{n,d}$ denote the uniform model on $\pairing_{n,d}$. 
We write
$F\in\mathcal{P}_{n,d}$ to denote that $F$ is chosen uniformly at random from $\pairing_{n,d}$.
By shrinking each cell to a vertex, a pairing $F$ gives rise to a $d$-regular multigraph
$G(F)$, which may have loops or multiple edges. Say that $F$ is simple if $G(F)$ is
simple (no loops or multiple edges).
Conditioned on being simple,
$G(F)$ is uniformly random over $\mathcal{G}_{n,d}$ when $F\in \mathcal{P}_{n,d}$.
The number of ways to pair up $2a$ points is
\begin{equation}
\label{eq:M-def}
    M(2a):= \frac{(2a)!}{a!\, 2^a},
\end{equation}
so $|\pairing_{n,d}| = M(dn)$.

Let ``Simple'' denote the set of simple pairings in $\pairing_{n,d}$.
Bender and Canfield \cite{bandc78}
proved that for constant $d$, as \(n\rightarrow \infty \),
\[\Pr (\text{Simple})\sim \exp\left(\frac{1-d^2}{4}\right).\]
Let $Y$ be a random variable on $\pairing_{n,d}$ which satisfies $Y(F) = Y(F')$ for all $F,F'\in\pairing_{n,d}$ such that $G(F) = G(F')$.  The corresponding random variable on
graphs, $Y_{\mathcal{G}}$, is defined by $Y_{\mathcal{G}}\big(G(F)\big) = Y(F)$. 
Using the fact that every $G\in\mathcal{G}_{n,d}$ corresponds to exactly the same 
number of pairings (namely, $(d!)^n$ pairings), we have
\begin{equation}
  \label{eqn:pndsimple}
  \Pr(Y_{\mathcal{G}}=0) = \Pr\big(Y=0\mid \text{Simple}\big) \leq
  \frac{\Pr(Y=0)}{\Pr (\text{Simple})} = O\big(\Pr(Y=0)\big).
\end{equation}
Therefore, to prove that a.a.s.\ $Y_{\mathcal{G}} > 0$ in $\mathcal{G}_{n,d}$, it 
suffices to prove that a.a.s.\ $Y>0$ in $\mathcal{P}_{n,d}$.  We will follow this approach
and study the number of $p$-orientations of a random pairing.

\subsection{The small subgraph conditioning method}\label{s:SSCM}

We will use the small subgraph conditioning method in the following form, adapted slightly from~\cite[Theorem~4.1]{wormald99}.  
\begin{theorem}
  \label{thm:sscm}
  Let \(\lambda _i>0\) and \(\delta _i\ge -1\), \(i=1,2,\dots,\) be real numbers and suppose that
  for each \(n\) there are random variables \(X_i=X_i(n)\), \(i=1,2,\dots,\) and
  \(Y=Y(n)\) defined on the same probability space \(\mathcal{G} =\mathcal{G} (n)\) such that
  \(X_i\) is nonnegative, integer-valued, \(Y\) is nonnegative and
  \(\Exp Y>0\) (for \(n\) sufficiently large). Suppose, further that
  \begin{enumerate} 
  \item[\emph{(A1)}] For each \(k\ge 1\), the random variables \(X_1,\dots,X_k\) are asymptotically independent
    and Poisson
    such that \({\Exp}X_i\rightarrow \lambda _i\) for all \(i\in \mathbb{N} \).
  \item[\emph{(A2)}]
    \[
    \frac{\Exp \left[Y(X_1)_{j_1}(X_2)_{j_2}\dots(X_k)_{j_k}\right]}{\Exp Y}\rightarrow \prod _{i=1}^k\big(\lambda _i(1+\delta _i)\big)^{j_i}
    \]
    for every finite sequence \(j_1,j_2,\dots,j_k\) of nonnegative integers.
  \item[\emph{(A3)}] \(\sum _{i=1}^{\infty}\lambda _i\delta _i^2<\infty \).
  \item[\emph{(A4)}]
    \[
    \frac{\Exp (Y^2)}{(\Exp Y)^2}\le \exp\left(\sum _{i=1}^{\infty}\lambda _i\delta _i^2\right)+o(1)\quad\text{as }n\rightarrow \infty .
    \]
  \end{enumerate}
  Then
  \[\Pr (Y_n>0)=\exp\left(-\sum _{\delta _i=-1}\lambda _i\right)+o(1).\]
\end{theorem}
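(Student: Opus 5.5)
The statement just given is Theorem~\ref{thm:sscm}, adapted from~\cite[Theorem~4.1]{wormald99}. I will not redo the general theory; I will reproduce the standard Janson--Robinson--Wormald argument in outline, so that the adaptation is visibly harmless.

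\emph{Step 1: the conditional mean.} Let $\mathcal{F}_k$ be the $\sigma$-algebra generated by $X_1,\dots,X_k$. Using (A1) and (A2), and the fact that factorial moments determine Poisson limits, I will show that for fixed $x_1,\dots,x_k$
\[
\frac{\Exp\big(Y \mid X_1=x_1,\dots,X_k=x_k\big)}{\Exp Y}\to \prod_{i=1}^k (1+\delta_i)^{x_i}e^{-\lambda_i\delta_i}.
\]
The method of moments gives this: the $Y$-weighted measure $\Exp[Y\,\mathbb{1}\{\cdot\}]/\Exp Y$ has factorial moments converging to those of independent Poisson variables with means $\lambda_i(1+\delta_i)$. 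Its ratio to the unweighted law, which is Poisson$(\lambda_i)$ by (A1), is the displayed product.

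\emph{Step 2: the variance decomposition.} Set $W_k=\Exp(Y\mid\mathcal{F}_k)/\Exp Y$. By Step 1, and with uniform integrability supplied by the second-moment bound (A4), we get
\[
\Exp W_k^2\to \exp\Big(\sum_{i\le k}\lambda_i\delta_i^2\Big).
\]
Write $\tilde Y=Y/\Exp Y$. Since $\Exp\tilde Y^2-\Exp W_k^2=\Exp(\tilde Y-W_k)^2$, (A4) gives
\[
\limsup_n \Exp(\tilde Y-W_k)^2\le \exp\Big(\sum_{i\ge1}\lambda_i\delta_i^2\Big)-\exp\Big(\sum_{i\le k}\lambda_i\delta_i^2\Big).
\]
By (A3) this tends to $0$ as $k\to\infty$.

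\emph{Step 3: conclusion.} The variable $W_k$ converges in distribution to
\[
W^{(k)}=\prod_{i\le k}(1+\delta_i)^{Z_i}e^{-\lambda_i\delta_i},
\]
where the $Z_i$ are independent Poisson$(\lambda_i)$. As $k\to\infty$, (A3) makes $W^{(k)}$ an $L^2$-bounded martingale, so it converges to a limit $W$. Hence $\tilde Y\to W$ in distribution.

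The limit vanishes exactly when some $Z_i\ge1$ with $\delta_i=-1$. Otherwise the infinite product converges to a positive value almost surely, again by (A3), through $\sum|\log(1+\delta_i)-\delta_i|Z_i<\infty$. It follows that $\Pr(W=0)=1-\exp(-\sum_{\delta_i=-1}\lambda_i)$.

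\emph{The main obstacle.} Distributional convergence only gives $\liminf \Pr(Y>0)\ge\Pr(W>0)$. For the matching upper bound, I use that on $\{X_i\ge1$ for some $i$ with $\delta_i=-1\}$ the conditional mean of $Y$ is $o(\Exp Y)$. Because $Y$ is a nonnegative integer-type count in the standard setting, this forces $Y=0$ with probability $1-o(1)$ on that event.

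The delicate point is justifying this last step with $\delta_i=-1$ allowed. The paper's version presumably only needs the lower bound in any case, since it aims to show $Y>0$ a.a.s. where all $\delta_i>-1$. I would therefore follow~\cite{wormald99} verbatim for these details.
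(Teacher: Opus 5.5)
The paper gives no proof of this statement; it imports \cite[Theorem~4.1]{wormald99}, and your Steps~1--3 are the standard Janson argument behind that result. They are sound. They give $Y/\Exp Y\to W$ in distribution, hence $\liminf_{n}\Pr(Y>0)\ge\Pr(W>0)=\exp(-\sum_{\delta_i=-1}\lambda_i)$. Two small repairs are needed:
\begin{itemize}
\item In Step~2, (A4) does not give uniform integrability of $W_k^2$. You only need $\liminf_n\Exp W_k^2\ge\exp(\sum_{i\le k}\lambda_i\delta_i^2)$, and this follows from Fatou applied termwise to $\sum_x\Pr(X=x)\big(\Exp(Y\mid X=x)/\Exp Y\big)^2$.
\item In Step~3, positivity of $W$ also needs a.s.\ convergence of $\sum_i\delta_i(Z_i-\lambda_i)$, which is an $L^2$-bounded martingale by (A3). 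It also needs Borel--Cantelli for the indices with $\delta_i$ close to $-1$.
\end{itemize}

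The genuine gap is the matching upper bound when some $\delta_i=-1$. Let $E=\{X_i\ge 1 \text{ for some } i \text{ with } \delta_i=-1\}$. Knowing $\Exp(Y\mathbb{1}_E)=o(\Exp Y)$ does not force $\Pr(Y>0,\,E)=o(1)$, even for integer-valued $Y$, because $\Exp Y$ may tend to infinity.

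In fact no argument can supply this step from (A1)--(A4). Take independent $X_i\sim\mathrm{Po}(\lambda_i)$ with $\lambda_1=1$, $\delta_1=-1$ and $\delta_i=0$ for $i\ge 2$, and let $Y'=2^n\mathbb{1}\{X_1=0\}$. Then $Y'$ satisfies (A1)--(A4). So does the integer-valued $Y=Y'+1$, since the extra terms are $O(1/\Exp Y')$ in both (A2) and (A4). Yet $\Pr(Y>0)=1\ne e^{-1}$.

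So the equality in the conclusion needs an extra hypothesis, for instance that $Y=0$ whenever $X_i>0$ for some $i$ with $\delta_i=-1$. Deferring to \cite{wormald99} does not close this gap. As you observe, it does not affect the paper: Lemma~\ref{lemma:joint-moments} gives $\delta_i>-1$ for all $i$. The conclusion the paper needs, $\Pr(Y>0)=1-o(1)$, is therefore exactly the lower bound your Steps~1--3 establish.
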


We will apply this method with  \(Y\) given by the number of \(p\)-orientations of
\(F\in \mathcal{P} _{n,d}\) and where \(X_i\) is the number of \(i\)-cycles in \(\mathcal{P} _{n,d}\) for all $i\geq 1$.  (A set of pairs is called an $i$-cycle if the graph induced by this set of pairs form an $i$-cycle.)
Bollob{\' a}s \cite{BOLLOBAS1980311}, proved that the random variables
\(X_1,X_2,\dots\) are asymptotically independent and Poisson, with
such that $\lim_{n\rightarrow \infty }\Exp (X_i) = \lambda_i$ where
\begin{equation}
    \label{eq:lambda-def}
\lambda_i:=\frac{(d-1)^i}{2i}.
\end{equation}
Hence assumption (A1) holds.  In the next section we establish (A2) and (A3),
under the conditions of Theorem~\ref{thm:small-dp}(ii).

\section{Expected value and short cycles} \label{s:first-steps}

Fix natural numbers \(d,p\in \mathbb{N} \) with $d\geq 4$ and $1\leq p < d/2$. 

The pairs of a pairing $F$ can be oriented by specifying one point of each pair as the \emph{in-point} and the other as the \emph{out-point}.  Such an assignment of points is called a $p$-\emph{orientation} if it corresponds to a $p$-orientation of $G(F)$, where this notion is extended to multigraphs in the natural way. 
Given a pairing $F\in\pairing_{n,d}$ and a $p$-orientation of that pairing, say that a cell containing $p$ in-points is an \emph{in-vertex} and that a cell
containing $p$ out-points is an \emph{out-vertex}.
(The terminology is consistent with Delcourt et al.~\cite{delcourt25}.)
There are $n/2$ vertices of each type, as can be seen by equating the number of in-points and out-points.

The $p$ in-points in an in-vertex are called \emph{special points}, and similarly the $p$ out-points in an out-vertex are \emph{special points}.

Recall that $Y$ denotes the number of $p$-orientations of $F\in\mathcal{P}_{n,d}$.

\begin{lemma}
\label{lem:the-first-mom}
Suppose that $d\geq 4$ and $1\leq p < d/2$.  Then
\begin{equation}
  \label{eq:EY}
  \Exp (Y)=\frac{\binom{n}{n/2}{\binom{d}{p}}^n(dn/2)!}{M(dn)} 
  \sim \sqrt{d}\left(2^{1-d/2}\binom{d}{p}\right)^n.
\end{equation}
In particular, Theorem~\emph{\ref{thm:small-dp}(i)} holds when $2^{1-d/2}\binom{d}{p} < 1$.
\end{lemma}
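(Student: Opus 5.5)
We prove the exact formula by counting pairs $(F,\sigma)$, where $F\in\pairing_{n,d}$ and $\sigma$ is a $p$-orientation of $F$. Then we obtain the asymptotics from Stirling's formula, and the consequence for Theorem~\ref{thm:small-dp}(i) from Markov's inequality together with (\ref{eqn:pndsimple}).

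\emph{Counting.} A pair $(F,\sigma)$ is determined by the following choices, made in order:
\begin{itemize}
\item[(a)] the set of in-vertices, which must be exactly $n/2$ cells as noted above, giving $\binom{n}{n/2}$ choices;
\item[(b)] in each cell, the $p$ special points. In an in-vertex these are the in-points and the other $d-p$ points are out-points; in an out-vertex the roles are reversed. This gives $\binom{d}{p}^n$ choices.
\item[(c)] a bijection from the set of in-points to the set of out-points.
\end{itemize}
For (c), the total number of in-points is $(n/2)p+(n/2)(d-p)=dn/2$, and likewise for out-points, so there are $(dn/2)!$ bijections.

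Each such choice gives a distinct pairing together with an orientation. Conversely, every $p$-orientation of every pairing arises exactly once: the in/out labelling of the points recovers choices (a) and (b), because $p\neq d-p$ since $p<d/2$, and the oriented pairs recover choice (c). Dividing the number of pairs by $|\pairing_{n,d}|=M(dn)$ gives the exact formula in (\ref{eq:EY}).

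\emph{Asymptotics.} Apply Stirling's formula to each factor:
\begin{itemize}
\item $\binom{n}{n/2}\sim 2^n\sqrt{2/(\pi n)}$;
\item $(dn/2)!\sim \sqrt{\pi dn}\,(dn/(2e))^{dn/2}$;
\item $M(dn)=(dn)!/((dn/2)!\,2^{dn/2})\sim \sqrt2\,(dn/e)^{dn/2}\,2^{-dn/2}\cdot 2^{dn/2}\cdot 2^{-dn/2}$, which simplifies to $\sqrt{2}\,(dn/e)^{dn/2}$ up to checking the powers of $2$ carefully.
\end{itemize}
Dividing, the ratio $(dn/2)!/M(dn)$ is $\sim \sqrt{\pi d n/2}\,2^{-dn/2}$. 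Multiplying by $\binom{n}{n/2}\sim 2^n\sqrt{2/(\pi n)}$ gives
\[
\Exp(Y)\sim\sqrt{d}\,2^{n}2^{-dn/2}\binom{d}{p}^n=\sqrt d\,\bigl(2^{1-d/2}\tbinom dp\bigr)^n .
\]
The only real care needed anywhere in the proof is bookkeeping the constant factors of $2$ and $\pi$ in this step.

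\emph{Consequence.} If $2^{1-d/2}\binom dp<1$, then $\Exp Y\to0$ exponentially fast. By Markov's inequality $\Pr(Y>0)\le \Exp Y=o(1)$ in $\mathcal{P}_{n,d}$. Since $\Pr(\text{Simple})$ is bounded away from $0$, the argument of (\ref{eqn:pndsimple}), applied to the event $Y>0$ instead of $Y=0$, gives $\Pr(Y_{\mathcal G}>0)=O(\Pr(Y>0))=o(1)$. Hence a.a.s.\ $G\in\mathcal{G}_{n,d}$ has no $p$-orientation, which is Theorem~\ref{thm:small-dp}(i) in the strict-inequality case.

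For odd $n$ there is nothing to prove, since $Y=0$ deterministically. The boundary case $\binom dp=2^{d/2-1}$, where $\Exp Y$ does not tend to zero, is handled separately (Lemma~\ref{l:dp81}) and is not part of this lemma.
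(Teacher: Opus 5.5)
Your proof is correct and follows the paper's argument: the same three-factor count ($\binom{n}{n/2}$ choices of in-vertices, $\binom{d}{p}^n$ choices of special points, $(dn/2)!$ in-point/out-point matchings), then Stirling, then Markov. Your explicit transfer to $\mathcal{G}_{n,d}$ via the bound in (\ref{eqn:pndsimple}) is a detail the paper leaves implicit.

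The only blemish is your displayed intermediate product for $M(dn)$, which carries one spurious factor $2^{-dn/2}$. The value $M(dn)\sim\sqrt{2}\,(dn/e)^{dn/2}$ that you actually use is correct, so the final asymptotic stands.
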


\begin{proof}
Observe
that \(|\mathcal{P} _{n,d}|\, \Exp  Y\) counts the number of ways to choose a pairing together with a $p$-orientation of that pairing. There are
\(\binom{n}{n/2}\) ways to specify the in-vertices
and out-vertices.  For each cell there are \(p\) special points,
so there are \({\binom dp}^n\) ways of choosing all of the special
points.  Finally, since the in-points must be matched to the out-points,
there are \((dn/2)!\) possible ways of forming a pairing \(F\) from a
choice of all in-vertices, out-vertices and all special points.
This proves the combinatorial expression for $\Exp (Y)$, recalling that $|\mathcal{P}_{n,d}| = M(dn)$,
and the asymptotic
expression follows by applying Stirling's formula.

Finally, suppose that $2^{1-d/2}\binom{d}{p} < 1$. Then (\ref{eq:EY}) implies that $\Exp(Y) = o(1)$, and the second statement of the lemma follows by applying Markov's inequality. 
\end{proof}

It follows from (\ref{eq:EY}) that $\Exp(Y)$ tends to infinity whenever $2^{1-d/2}\binom{d}{p}>1$.  This occurs for all $(d,p)$ given in (\ref{eq:list}), as can be checked by substitution.
We now show that there is a unique pair $(d,p)$ with $1\leq p < d/2$ for which $\Exp(Y)$ is finite and positive, and complete the proof of Theorem~\ref{thm:small-dp}(i).

\begin{lemma}
\label{l:dp81}
  The identity $\binom{d}{p}=2^{d/2-1}$ has a unique solution with $d\geq 4$ and $1 \leq p < d/2$, namely   \((d,p)=(8,1)\). Furthermore, $F\in\mathcal{P}_{n,8}$ a.a.s.\ admits no $1$-orientation.
\end{lemma}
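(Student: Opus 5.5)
The proof has two parts: a number-theoretic uniqueness statement, and a probabilistic statement for the boundary case $(8,1)$, where the first moment of Lemma~\ref{lem:the-first-mom} gives $\Exp(Y)\sim\sqrt{8}$, bounded but not tending to zero.

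\emph{Uniqueness.} Since $\binom{d}{p}$ is an integer, $d$ must be even; write $d=2m$ with $m\ge 2$. Then $\binom{2m}{p}=2^{m-1}$ with $1\le p<m$. For $p=1$ this reads $2m=2^{m-1}$, whose only solution with $m\ge2$ is $m=4$ (check $m=2,3,4$ directly; for $m\ge5$ the right side grows faster, by induction). For $p\ge 2$ the left side must be a power of $2$. I would use Kummer/Sylvester-type facts: for $2\le p\le 2m-2$ the binomial $\binom{2m}{p}$ has an odd prime factor. Concretely, by Sylvester's theorem $\binom{2m}{p}$ with $p\le m$ has a prime factor greater than $p\ge2$, hence an odd prime factor, so it is never a power of $2$. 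A cheaper alternative is to note that $\binom{2m}{2}=m(2m-1)$ has the odd factor $2m-1>1$, and to treat $p\ge3$ by a growth comparison together with a finite check. I would use Sylvester to keep the argument short.

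\emph{No $1$-orientation for $d=8$.} Here the first moment alone fails, so I would sharpen it by conditioning on a local structure that a $1$-orientation forbids or penalises. In a $1$-orientation every in-vertex has exactly one in-point, and the other $7$ edges point outward. Each edge joins a special point to a nonspecial one, or two nonspecial points, and so on. The bisection picture of Section~\ref{ss:bisection} forces at least $3n$ of the $4n$ edges to cross between $U$ and $W$, which is a rigid structure.

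The natural tool is short cycles. I would compute $\Exp\big(Y(X_1)_{j_1}\cdots(X_k)_{j_k}\big)/\Exp Y\to\prod(\lambda_i(1+\delta_i))^{j_i}$ as in the SSCM computation of Section~\ref{s:short-cycles} (assumed valid in general), and then use the standard consequence that, on the event of an atypical cycle count, $\Exp(Y\mid X)$ behaves like $\Exp Y\prod_i(1+\delta_i)^{X_i}e^{-\lambda_i\delta_i}$. Because $\Exp Y=\Theta(1)$ while $\sum\lambda_i\delta_i^2=\infty$ in this regime, conditioning on typical cycle counts, which form a set of probability $1-o(1)$, pushes the conditional expectation to $0$. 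Formally, I would choose $K$ large and set $Z=\prod_{i\le K}(1+\delta_i)^{-X_i/2}$-type weights. Alternatively, the simplest option is to use loops ($i=1$) and double edges ($i=2$) if $\delta_1=-1$ or $\delta_2\neq0$ produce enough discrepancy. A cleaner route is a truncated Cauchy--Schwarz: $\Pr(Y>0)\le\Pr(A^c)+\Exp(Y\mathbf 1_A)$, where $A$ is the event $\{\prod_{i\le K}(1+\delta_i)^{X_i}e^{-\lambda_i\delta_i}<\varepsilon\}$. The key point is that $\prod_{i\le K}(1+\delta_i)^{X_i}e^{-\lambda_i\delta_i}$ under the unweighted Poisson law tends to $0$ in probability as $K\to\infty$, exactly because $\sum\lambda_i\delta_i^2$ diverges.

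\emph{Main obstacle.} The hard step is this last divergence argument: it needs explicit $\delta_i$ for $(8,1)$ and a verification that $\sum_i\lambda_i\delta_i^2=\infty$. I expect $\delta_i\approx c\,\mu^i$ with $|\mu|(d-1)^{1/2}\ge1$, i.e.\ (\ref{eq:inequality}) failing; indeed $(64-32+4-8)^2=784>448$. I would compute $\delta_i$ via the $2\times2$ (or $4$-type) transfer matrix of point types along a cycle, as is standard, and then show that the weighted Poisson measure and the unweighted one are mutually singular.
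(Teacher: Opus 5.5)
Your proof is correct, but the probabilistic half takes a genuinely different route from the paper's. The uniqueness half differs only mildly. Your use of Sylvester's theorem gives an odd prime factor of $\binom{2m}{p}$ for every $p\ge 2$ at once. The paper instead cites Erd\H{o}s's theorem that $\binom{d}{p}$ is a perfect power only for $p\le 3$, and checks $p=2,3$ by hand. Either works.

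For $(8,1)$ the paper never uses undirected cycle counts. It works in the planted space of pairs $(F,O)$ and shows that the number of \emph{directed} $r$-cycles has mean $\mu_r=\frac1r\bigl(2p(d-p)/d\bigr)^r\to\infty$ and is concentrated. Reversing any one of these cycles yields another $1$-orientation. A double count then gives $\Pr(1\le Y\le k)\le \Exp(Y)\cdot o(1)$, and Markov's inequality $\Pr(Y>k)\le\Exp(Y)/k$ finishes.

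That argument exploits a symmetry specific to orientations: solutions come in large clusters, so $Y$ is a.a.s.\ either $0$ or larger than any fixed $k$. It needs only first and second moments of one family of counts. Your argument is the generic non-contiguity argument and needs no combinatorial trick. It does need the full joint factorial-moment statement, Lemma~\ref{lemma:joint-moms}, which the paper only sketches and whose derivation does not use the standing assumption $\Exp Y\to\infty$. It also needs $\sum_i\lambda_i\delta_i^2=\infty$.

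To make it a proof, commit to the truncated version and write the key bound explicitly. Let $\tilde{\Pr}$ be the $Y$-weighted measure, $L_K=\prod_{i\le K}(1+\delta_i)^{X_i}e^{-\lambda_i\delta_i}$ and $A=\{L_K<\varepsilon\}$. Then $\Pr(Y>0)\le\Pr(A^c)+\Exp(Y\mathbf{1}_A)=\Pr(A^c)+\Exp(Y)\,\tilde{\Pr}(A)$; this step is Markov, not Cauchy--Schwarz. By Lemma~\ref{lemma:joint-moms} and the method of moments, $\tilde{\Pr}(A)\to\Exp\bigl[L_K(Z)\mathbf{1}\{L_K(Z)<\varepsilon\}\bigr]\le\varepsilon$, where the $Z_i\sim\mathrm{Po}(\lambda_i)$ are independent. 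This averaged identity is the rigorous form of your pointwise heuristic for $\Exp(Y\mid X)$. Then let $K\to\infty$ (so $\Pr(A^c)\to 0$ by the divergence) and finally $\varepsilon\to0$.

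The ``loops and double edges only'' option cannot work. For any fixed $K$, the two limiting laws of $(X_1,\dots,X_K)$ are mutually absolutely continuous, so $\Pr(A^c)+\sqrt8\,\tilde{\Pr}(A)$ stays bounded away from $0$.

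Finally, no transfer-matrix computation is needed. Lemma~\ref{lemma:joint-moments} already gives $\delta_i=(-1/2)^i$ for $(8,1)$, so $\lambda_i\delta_i^2=(7/4)^i/(2i)$. This is consistent with Lemma~\ref{lemma:expsum-lemma}, since (\ref{eq:inequality}) fails.
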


\begin{proof} 
 By a theorem of Erd{\H o}s (\cite[Chapter~3]{pftb})
  the binomial coefficient \(\binom dp\) is a perfect power only if \(p\le 3\).
  It is easy to check that for $p\in \{2,3\}$ there are no solutions,
  while for $p=1$ the only solution is $d=8$.

To prove the second statement, let
\(\opairing_{n,d,p}\) be the uniform probability space of ordered pairs
  \(\vec{F}=(F,O)\), where \(O\) is a $p$-orientation of \(F\in \mathcal{P} _{n,d}\).  
It follows from (\ref{eq:EY}) that 
\begin{equation}
  \label{eqn:omega-size}
  |\opairing_{n,d,p} |=\Exp (Y)|\mathcal{P} _{n,d}|=\binom{n}{n/2}\binom{d}{p}^n\left(\frac{dn}2\right)!.
\end{equation}
A directed cycle in $\vec{F}\in\opairing_{n,d,p}$ is a 
subpairing corresponding to a cycle \(C\) oriented such that
every vertex of \(C\) has in-degree \(1\) in \(C\). 
For \(r\in \mathbb{N} \),
let \(W_r\) be the number of directed
cycles of length \(r\) in \(\vec{F}\in \opairing_{n,d,p} \) and let \(W=\sum _{r=1}^{\infty}W_r\)
be the number of directed cycles in \(\vec{F}\in \opairing_{n,d,p} \).

\noindent {\bf Claim:}\ We claim that for any fixed positive integer $k$ we have a.a.s\ $W\geq k$.
Observe that 
\begin{align*}
\Exp (W_r) &= \frac{1}{|\opairing_{n,d,p}|}\, \frac{(n)_r}{r}\, \big( d(d-1)\big)^r\,
 \binom{n}{n/2}\, \binom{d-2}{p-1}^r\, \binom{d}{p}^{n-r}\, (dn/2-r)!\\
  &\sim \frac{1}{r}\left(\frac{2p(d-p)}{d}\right)^r =: \mu_r.
\end{align*}
To see this, note that the cells in a directed $r$-cycle can be chosen
in $(n)_r/r$ ways, then points selected for the $r$-cycle in $\big( d(d-1)\big)^r$ ways, one as an in-point and one as an out-point.  The in-vertices can be
specified in $\binom{n}{n/2}$ ways, and remaining special points from cells in the $r$-cycle can be chosen in $\binom{d-2}{p-1}$ ways, or $\binom{d}{p}$ ways for cells outside the $r$-cycle. Finally, there are $(dn/2-r)!$ ways to pair up the remaining in-points and out-points, and we divide by (\ref{eqn:omega-size}) to
give an expected value.
Similar calculations show that $\Exp[ (W_r)_2 ]\sim \mu_r^2$, and hence that
$\textbf{Var}(W_r)\sim \mu_r$. (In fact, the $W_r$ are asymptotically independent Poisson random variables with mean $\mu_r$, though we do not need this here.)
Since \(\mu_r\to\infty\) as \(r\to\infty\), we can assume that
  \(k<\mu_r/2\). Then, Chebyshev's Inequality implies that
  \[ \lim_{n\to\infty} \Pr(W_r\le k)\le \lim_{n\to\infty}  \Pr\left(|W_r-\mu_r|\ge \frac{\mu_r}{2}\right)\le\frac{4}{\mu_r},\]
  hence,
  \[ \lim_{n\to\infty}  \Pr(W>k)\ge 1- \lim_{n\to\infty}  \Pr(W_r\le k)\ge 1-\frac{4}{\mu_r}.\]
  The proof of the claim is completed by taking \(r\) arbitrarily large.

  Finally, fix a positive integer $k$ and let $(d,p)=(8,1)$.
  Let \(O\) be an orientation of \(F\in \mathcal{P}_{n,d}\) such that
  \(W(F,O)>k\). 
  Then, reversing the direction of the edges along different \(k\) cycles, we can
  obtain at least \(k\) further \(p\)-orientations of \(F\). Thus,
  if \(Y(F)\le k\), then \(W(F,O)\le k\) for any orientation \(O\).
  Applying this observation we see that
  \[ |\{F\in \mathcal{P} _{n,8} \mid  1\le Y(F)\le k\}|
  \le  |\big\{\vec{F}\in \opairing_{n,8,1}  \mid  W\big(\vec{F}\big)\le k\big\}|.\]
  However, the left hand side of this inequality
  is \(|\mathcal{P} _{n,8}|\, \Pr \left(1\le Y(F)\le k\right)\)
  and the right hand side is \(|\opairing_{n,8,1} |\, \Pr \left(1\le W(\vec{F})\le k\right)\). So, applying our claim gives
  \[\Pr \left(1\le Y(F)\le k\right)=o\left(\frac{|\opairing_{n,8,1} |}{|\mathcal{P} _{n,8}|}\right)=o(1),\]
  since \(|\opairing_{n,8,1} |/|\mathcal{P} _{n,8}|=\Exp (Y)=O(1)\).
  Hence, by Markov's inequality,
  \[\limsup_{n\rightarrow \infty }\Pr (Y>0)=\limsup_{n\rightarrow \infty }\Pr (Y>k)\le \lim_{n\rightarrow \infty }\frac{\Exp (Y)}{k}=\frac{\sqrt{8}}{k}.\]
  Since this holds for arbitrary \(k\in \mathbb{N} \), the result follows by taking  \(k\)  to infinity.
\end{proof}

\subsection{The effect of short cycles} \label{s:short-cycles}

In order to establish that condition (A2) of Theorem~\ref{thm:sscm} holds,
we must investigate how the presence of short cycles affects the number of
$p$-orientations. For the remainder of the paper we assume that $\Exp Y\to\infty$, which in particular holds for all $(d,p)$ in \eqref{eq:list}.

\begin{lemma}
  \label{lemma:joint-moments}
  Suppose that $d,p$ are fixed positive integers such that  \(d\ge 4\) and $p < d/2$. Then for all positive integers $i$,
  \[\frac{\Exp (YX_i)}{\Exp (Y)}\rightarrow \lambda _i(1+\delta _i)\]
  as \(n\rightarrow \infty \), where \(\delta _i\) is defined by
  \begin{equation}
      \label{eq:delta-def}
  \delta _i\coloneq \left(-\frac{d^2-4dp+4p^2-d}{d(d-1)}\right)^i.
  \end{equation}
  Furthermore, $\delta_i>-1$ for all $i\geq 1$.
\end{lemma}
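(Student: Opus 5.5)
The plan is to compute $\Exp(YX_i)$ directly in the configuration model. I would count triples consisting of a pairing, a $p$-orientation of it, and a distinguished $i$-cycle, and then divide by the count for $\Exp(Y)$ from Lemma~\ref{lem:the-first-mom}. The local constraints at the cycle vertices will be handled with a $2\times 2$ transfer matrix. The answer then becomes a trace $\operatorname{tr}(B^i)$ whose two eigenvalues are $1$ and the base of $\delta_i$ in \eqref{eq:delta-def}.

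\textbf{Counting.} Choose the cells of the cycle in $(n)_i/(2i)$ ways, with a cyclic order and traversal direction. Choose the two cycle points in each cell in $(d(d-1))^i$ ways. Then orient each cycle pair as forward (F) or backward (B) relative to the traversal.

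Next choose the set of in-vertices. Suppose $j$ of the cycle cells are in-vertices. The number of ways to complete this to a set of $n/2$ in-vertices is $\binom{n-i}{n/2-j}\sim \binom{n}{n/2}2^{-i}$, uniformly in $j$ since $i$ is fixed. So asymptotically each cycle cell is independently an in- or out-vertex with weight $1/2$.

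At a cycle cell with incoming cycle edge in state $a$ and outgoing cycle edge in state $b$, the number of in-points among its two cycle points is $k=[a=F]+[b=B]$. An in-vertex then has $\binom{d-2}{p-k}$ choices of special points, and an out-vertex has $\binom{d-2}{p-(2-k)}$ choices. Cells outside the cycle contribute $\binom{d}{p}$ each. The remaining in-points and out-points are matched in $(dn/2-i)!$ ways.

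\textbf{Ratio.} Dividing by the count for $\Exp(Y)$ and using $(dn/2-i)!/(dn/2)!\sim (dn/2)^{-i}$, I expect
\[
\frac{\Exp(YX_i)}{\Exp Y}\sim \frac{n^i}{2i}\,(d(d-1))^i\Big(\frac{2}{dn}\Big)^i \operatorname{tr}(B^i)=\lambda_i \operatorname{tr}(B^i).
\]
Here $B$ is indexed by $\{F,B\}$ with
\[
B_{ab}=\frac{\binom{d-2}{p-k}+\binom{d-2}{p-2+k}}{\binom{d}{p}},\qquad k=[a=F]+[b=B].
\]
This uses the factor $2\cdot\tfrac12$: the $2$ from the choice of edge orientation absorbed into the $2/(dn)$ normalisation, and the $\tfrac12$ from the vertex type. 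I need to check this bookkeeping carefully; it is the step most likely to go wrong.

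\textbf{Eigenvalues of $B$.} The diagonal entries ($k=1$) are $x=2\binom{d-2}{p-1}/\binom dp$, and the off-diagonal entries ($k\in\{0,2\}$) are $y=\big(\binom{d-2}{p}+\binom{d-2}{p-2}\big)/\binom dp$. So $B$ is symmetric with eigenvalues $x+y$ and $x-y$. By Vandermonde's identity, $\binom{d-2}{p}+2\binom{d-2}{p-1}+\binom{d-2}{p-2}=\binom dp$, so $x+y=1$. Writing each ratio as a rational function,
\[
x-y=\frac{2p(d-p)-(d-p)(d-p-1)-p(p-1)}{d(d-1)}=-\frac{d^2-4dp+4p^2-d}{d(d-1)} .
\]
Hence $\operatorname{tr}(B^i)=1+\delta_i$. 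The count above gives $\Exp(YX_i)$ asymptotically, which is what (A2) needs for $k=1$; the joint factorial moments would follow from the same argument with disjoint cycles, but that is not part of this lemma.

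\textbf{The bound $\delta_i>-1$.} For even $i$ we have $\delta_i\ge 0$, so there is nothing to prove. For odd $i$ it suffices that $|x-y|<1$. First, $x-y<1$ is equivalent to $(d-2p)^2-d>-d(d-1)$, which holds because $(d-2p)^2>0$ and $d^2-2d>0$. Second, $x-y>-1$ is equivalent to $d^2-4dp+4p^2-d<d^2-d$, that is, $4p(p-d)<0$, which holds for $1\le p<d/2$.
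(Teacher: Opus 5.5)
Your proposal is correct. The counting framework is the same as the paper's: triples of pairing, $i$-cycle and $p$-orientation, with $(n)_i(d(d-1))^i/(2i)$ choices of cycle, $\binom{d}{p}$ choices per cell off the cycle and $(dn/2-i)!$ completions, all divided by the count for $\Exp Y$. Where you differ is in how you evaluate the local sum over cycle orientations and vertex types.

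The paper proceeds as follows:
\begin{itemize}
\item It classifies orientations of $C$ by the number $j$ of cells of in-degree $2$ in $C$; there are $2\binom{i}{2j}$ such orientations.
\item It assigns in/out status only to the $2j$ sink and source cells, using the extra parameters $s,t$.
\item The $i-2j$ cells of in-degree $1$ are absorbed into $\binom{n-2j}{n/2-O(1)}\sim\binom{n}{n/2}2^{-2j}$, since their weight $\gamma_1=\binom{d-2}{p-1}$ does not depend on their type.
\item Everything is then collapsed with the binomial theorem and the identity $(1+x)^m+(1-x)^m=2\sum_k\binom{m}{2k}x^{2k}$.
\end{itemize}

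Your $2\times 2$ transfer matrix treats every cycle cell uniformly. The result $\operatorname{tr}(B^i)=(x+y)^i+(x-y)^i$ is exactly the paper's final expression $x^i\big[(1+c)^i+(1-c)^i\big]$ with $c=\frac{\gamma_0}{2\gamma_1}\big(1+\frac{\gamma_2}{\gamma_0}\big)$ and $\gamma_k=\binom{d-2}{p-k}$. This is because the paper's prefactor $\frac{2p(d-p)}{d(d-1)}$ is your $x$, and $c$ is your $y/x$.

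What your route buys is structure. Vandermonde's identity forces one eigenvalue to be $1$, so $\delta_i$ is visibly the $i$th power of the other eigenvalue. You need neither the auxiliary parameters $s,t$ nor the even-binomial identity. The paper's route avoids linear algebra and stays closer to explicit combinatorics. Your verification that $\delta_i>-1$ agrees with the paper's remark that $|\delta_i|<1$.

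There is one bookkeeping slip to fix when you write it up. As written, the middle expression in your ``Ratio'' display equals $2^i\lambda_i\operatorname{tr}(B^i)$, because the factor $2^{-i}$ from $\binom{n-i}{n/2-j}\sim\binom{n}{n/2}2^{-i}$ has been dropped. Once you insert it, you get exactly $\lambda_i\operatorname{tr}(B^i)$.

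Two attributions should also be corrected:
\begin{itemize}
\item The factor $2^i$ in $(2/(dn))^i$ comes from $(dn/2-i)!/(dn/2)!$, that is, from the number of in-points. It does not come from the choice of edge orientations, which are all summed inside the trace.
\item $(n)_i/(2i)$ counts undirected cycles. The traversal direction is fixed arbitrarily, only so that forward and backward are defined.
\end{itemize}
None of this affects your conclusion.
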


\begin{proof}
  Firstly, observe that \(|\mathcal{P} _{n,d}\, |\Exp (YX_i)\) counts the number of triples of
  the form \((F,C,O)\), such \(F\in \mathcal{P} _{n,d}\) contains \(C\)
  an \(i\)-cycle in \(\mathcal{P} _{n,d}\) and \(O\) is a \(p\)-orientation of \(F\).
  There are
  \[\frac{(n)_i\, \big(d(d-1)\big)^i}{2i}\]
  ways of to choose the cells and pairs of an $i$-cycle \(C\). 

  Next we will sum over all ways to orient the pairs of $C$. This will involve the following parameters:  
  \begin{itemize}
      \item 
  $j$ denotes the number of cells with in-degree 2 in $C$ (note that, by symmetry, the number of cells with  out-degree \(2\) in \(C\) also equals $j$);
\item $s$ denotes the number of cells in $C$ with in-degree $2$ in $C$ which are in-vertices;
\item $t$ denotes the number of cells of $C$ with out-degree $2$ in $C$ which are in-vertices.
    \end{itemize}
There are exactly \(2\binom{i}{2j}\) possible orientations of \(C\)
  with \(j\) cells of in-degree \(2\), since the directions of the edges
  in the cycle are determined entirely by the choice of cells of in-degree
  \(2\) or out-degree \(2\) in \(C\)
  which must alternate around the cycle.
Then there are \(\binom js\) ways of choosing the in-vertices among
  those cells of in-degree \(2\) in \(C\), and \(\binom jt\) ways of choosing
  the in-vertices
  among those cells of out-degree \(2\) in $C$.

  If an in-vertex \(v\) has in-degree \(2\) in \(C\), then two
  of the special points for \(v\) are already determined. Hence, there are
  \(\binom{d-2}{p-2}^s\) ways of choosing the remaining
  special points
  for those vertices in \(C\) which have in-degree \(2\) in \(C\) and are also in-vertices
  in \(F\). Similarly, there are \(\binom{d-2}{p-2}^t\) ways of choosing
  the remaining special points of those cells in \(C\) which have out-degree
  \(2\) in \(C\) and are also out-vertices.
  Moreover, there are \(\binom{d-2}{p}^{j-s}\) ways of choosing the special
  points of those cells in \(C\) which are out-vertices in \(F\) yet
  have in-degree \(2\) in \(C\), since none of their special points are yet determined.
  Similarly, there are \(\binom{d-2}{p}^{j-t}\) ways of choosing the special points
  for those cells in \(C\) which are out-vertices in \(F\) yet have out-degree \(2\)
  in \(C\).

  Now, observe that if a cell has in-degree equal to \(1\) in \(C\), then
  one of its special points is already determined. Hence, there are
  \(\binom{d-2}{p-1}^{i-2j}\) ways of choosing the remaining special points
  for cells in \(C\).
  
  So far, exactly \(2j\) cells
  have been determined as either in-vertices
  or out-vertices. Since \(i,s,t=O(1)\) as \(n\rightarrow \infty \) and the number of
  in-vertices in \(F\) is exactly equal to the number of out-vertices in \(F\),
  there are 
  \[ \binom{n-2j}{n/2-O(1)} \sim \binom{n}{n/2}\, 2^{-2j}\] 
  ways of choosing the remaining in-vertices.
  Note, this also determines the out-vertices of \(F\). The special points of exactly
  \(i\) cells, namely, the cells of the cycle \(C\), have been determined already.
  Hence there are \(\binom{d}{p}^{n-i}\) ways of determining the remaining
  special points.
Now that all special points have been determined, there are 
  \[ \big(dn/2-i\big)! \sim \big(dn/2)!\, \left(\frac{2}{dn}\right)^i\] 
  ways of pairing up the in-points and out-points not involved in the cycle $C$.

Let \(\gamma_k:=\binom{d-2}{p-k}\) for $k=0,1,2$.
  Combining everything together, dividing by $M(dn)$ and summing over all possible values of \(j\), \(s\), and \(t\), we obtain, using (\ref{eq:EY}),
  \begin{align*}
\Exp (YX_i)
   &\sim \sum _{j=0}^{i/2}\, \sum _{s,t=0}^j \frac{(n)_i\, \big(d(d-1)\big)^i}{i\, M(dn)} \, \binom{i}{2j}\, 
    \binom js \binom jt \, \binom{d-2}{p-2}^{s+t}\, \binom{d-2}p^{2j-(s+t)}
    \binom{d-2}{p-1}^{i-2j} \\
    & \hspace*{2cm} \times 
    \binom{n}{n/2}\, 2^{-2j}\, \binom dp^{n-i}\, \big( dn/2\big)!\, \left(\frac{2}{dn}\right)^i \\ 
    &= 2\lambda _i\, \Exp(Y) \,   \left(\frac{2p(d-p)}{d(d-1)}\right)^i\, 
\sum _{j=0}^{i/2}
    \binom{i}{2j} \left(\frac{\gamma_0}{2\gamma_1}\right)^{2j} \, \sum _{s,t=0}^j \binom js \binom jt \gamma _2^{s+t}\gamma _0^{-(s+t)}\\
    &=  2\lambda _i \, \Exp(Y)\,  \left(\frac{2p(d-p)}{d(d-1)}\right)^i
    \, \sum _{j=0}^{i/2}\binom{i}{2j} \,\left(\frac{\gamma_0}{2\gamma_1}\,\left(1+\frac{\gamma _2}{\gamma _0}\right)\right)^{2j},
  \end{align*}
  using the Binomial Theorem to evaluate the sums over $s$ and $t$. Applying the standard identity \((1+x)^m+(1-x)^m=2\sum _{k=0}^{m/2} \binom{m}{2k}x^{2k}\),
  we obtain
  \begin{align*}
    \frac{\Exp (YX_i)}{\Exp Y} &\sim 
    \lambda _i \left(\frac{2p(d-p)}{d(d-1)}\right)^i
    \left[\left(1+\frac{\gamma _0}{2\gamma _1}\left(1+\frac{\gamma _2}{\gamma _0}\right)\right)^i+\left(1-\frac{\gamma _0}{2\gamma _1}\left(1+\frac{\gamma _2}{\gamma _0}\right)\right)^i\right] \\
  &= 
    \lambda _i \left(\frac{2p(d-p)}{d(d-1)}\right)^i\left[
      \left(\frac{d(d-1)}{2p(d-p)}\right)^i
      +\left(-\frac{d^2-4dp+4p^2-d}{2p(d-p)} \right)^i
      \right] \\
    &= \lambda _i\left(1+\left(-\frac{d^2-4dp+4p^2-d}{d(d-1)}\right)^i\right).
  \end{align*}
  This completes the proof, by definition of $\delta_i$, noting that $|\delta_i| < 1$ whenever $1\leq p < d$.
\end{proof}

Using standard arguments, it is possible to generalise Lemma~\ref{lemma:joint-moments} to
account for the presence of more than one cycle, leading to Lemma~\ref{lemma:joint-moms}. The proof involves counting the number 
of ways to choose
$j_i$ distinct $i$-cycles, for $i=1,\ldots, k$, then the union
of these cycles to a full pairing and fix a $p$-orientation of the pairing.
Since the total number of vertices in these cycles is $O(1)$, the contribution
to the sum
from cycles that are not pairwise disjoint is $O(1/n)$.
We omit the details, which are very similar to the proof of~\cite[Lemma~4.3]{stars}.

\begin{lemma}
  \label{lemma:joint-moms}
  If \(j_1,\dots,j_k\) is any finite sequence of natural numbers, then
  \[\lim_{n\rightarrow \infty }\frac{\Exp \left[Y(X_1)_{j_1}(X_2)_{j_2}\cdots (X_k)_{j_k}\right]}{\Exp (Y)}
  =\prod _{i=1}^k\big(\lambda _i(1+\delta _i)\big)^{j_i}.\]
\end{lemma}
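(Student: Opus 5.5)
We would prove Lemma~\ref{lemma:joint-moms} by extending the counting argument of Lemma~\ref{lemma:joint-moments}. Write $m=\sum_i j_i$. Then $|\pairing_{n,d}|\,\Exp\big[Y(X_1)_{j_1}\cdots(X_k)_{j_k}\big]$ counts tuples $(F,O,C_{1,1},\dots,C_{k,j_k})$. Here $O$ is a $p$-orientation of $F$, and for each $i$ the cycles $C_{i,1},\dots,C_{i,j_i}$ are an ordered list of distinct $i$-cycles of $F$. We split this count into two parts. The first consists of tuples in which the cycles are pairwise vertex-disjoint (cell-disjoint). The second consists of tuples in which some two cycles share a cell.

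For the disjoint part we repeat the computation in the proof of Lemma~\ref{lemma:joint-moments} for each cycle independently.
\begin{itemize}
\item The cells and pairs of the cycles can be chosen in $\prod_i \big((n)_i(d(d-1))^i/(2i)\big)^{j_i}(1+O(1/n))$ ways.
\item For each cycle we sum over its orientations and over its parameters $j,s,t$, obtaining the same factor as before.
\item The $2j$ cells of each cycle that are already assigned as in- or out-vertices together account for $O(1)$ cells in total. The remaining in-vertices can therefore be chosen in $\binom{n}{n/2}2^{-\sum 2j}(1+o(1))$ ways.
\item The special points of the $L=\sum_i i j_i$ cycle cells are fixed, so the remaining special points contribute $\binom dp^{n-L}$.
\item The remaining pairs contribute $(dn/2-L)!\sim (dn/2)!(2/dn)^L$.
\end{itemize}
Since all these factors are multiplicative over cycles, dividing by $M(dn)\Exp Y$ gives $\prod_i(\lambda_i(1+\delta_i))^{j_i}(1+o(1))$, by exactly the algebra of Lemma~\ref{lemma:joint-moments}.

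The main obstacle is the overlapping part, which we must show contributes $o(\Exp Y)$. The union $H$ of the overlapping cycles is a connected-component structure with $O(1)$ cells. Because some two cycles share a cell, $H$ has strictly more pairs than cells in at least one component; that is, its number of edges $e(H)$ exceeds its number of vertices $v(H)$. We group the tuples by the isomorphism type of $H$, of which there are $O(1)$. For a fixed type, the cells can be chosen in $O(n^{v(H)})$ ways and the points and orientations of the pairs of $H$ in $O(1)$ ways. The in/out status and special points of the cells of $H$ can be bounded crudely by a constant times $\binom{n}{n/2}\binom dp^{n-v(H)}$, using $\binom{n-a}{n/2-b}=O\big(\binom n{n/2}\big)$ for $a,b=O(1)$. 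The remaining pairs give $(dn/2-e(H))!=O\big((dn/2)!\,n^{-e(H)}\big)$. Comparing with the formula for $\Exp Y$ in Lemma~\ref{lem:the-first-mom}, the contribution is $O(n^{v(H)-e(H)})\Exp Y=O(n^{-1})\Exp Y$.

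One point to check here is that $H$ must contain no two cycles sharing a vertex while having $e(H)=v(H)$. This holds because a connected graph with $e=v$ contains exactly one cycle, whereas two distinct cycles sharing a cell force $e>v$ in their component. Summing the two parts gives the limit claimed in the lemma, as in~\cite[Lemma~4.3]{stars}.
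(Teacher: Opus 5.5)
Your proposal is correct and takes the same approach the paper sketches, deferring details to \cite[Lemma~4.3]{stars}: cell-disjoint cycle families reproduce the single-cycle computation of Lemma~\ref{lemma:joint-moments} multiplicatively, and any family in which two cycles share a cell has $e(H)\ge v(H)+1$, so it contributes only $O(n^{-1})\,\Exp Y$. Your cyclomatic-number justification of $e(H)>v(H)$, which the paper omits, is sound, including for loops and double pairs.
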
     

We have shown that condition (A2) of Theorem~\ref{thm:sscm} holds.  
Next we must evaluate the following infinite sum.

\begin{lemma}
  \label{lemma:expsum-lemma}
  Let \(d\ge 3\) and \(p\) be any natural numbers. Then the sum
  \(\sum _{i=1}^{\infty}\lambda _i\delta _i^2\) converges if and only if
  condition \emph{(\ref{eq:inequality})} holds. Moreover, if
  \emph{(\ref{eq:inequality})} holds then
  \[\exp\left(\sum _{i=1}^{\infty}\lambda _i\delta _i^2\right)=d\sqrt{\frac{d-1}{d^2(d-1)-(d^2-4dp+4p^2-d)^2}}.\]
\end{lemma}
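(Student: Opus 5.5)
Write $a := d^2-4dp+4p^2-d$, so that $\delta_i = (-a/(d(d-1)))^i$ by (\ref{eq:delta-def}). Substituting this and the definition (\ref{eq:lambda-def}) of $\lambda_i$ gives
\[
\lambda_i\delta_i^2 = \frac{(d-1)^i}{2i}\left(\frac{a^2}{d^2(d-1)^2}\right)^i = \frac{1}{2i}\, x^i,
\qquad\text{where } x := \frac{a^2}{d^2(d-1)} \geq 0 .
\]
So $\sum_{i\ge1}\lambda_i\delta_i^2$ is just $\tfrac12\sum_{i\ge1} x^i/i$, a series of nonnegative terms, and the lemma reduces to the standard behaviour of the logarithmic series.

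For the convergence claim I will argue directly. If $x<1$, the series converges by comparison with a geometric series. If $x\ge 1$, every term is at least $1/(2i)$, so the series diverges by comparison with the harmonic series. Hence the sum converges if and only if $x<1$, that is, if and only if $a^2<d^2(d-1)$, which is exactly (\ref{eq:inequality}). The boundary case $x=1$ must be classed as divergent, and this agrees with the strict inequality in (\ref{eq:inequality}).

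When $x<1$, use $\sum_{i\ge1}x^i/i=-\log(1-x)$ to get
\[
\exp\Big(\sum_{i\ge1}\lambda_i\delta_i^2\Big)=(1-x)^{-1/2}=\sqrt{\frac{d^2(d-1)}{d^2(d-1)-a^2}} = d\sqrt{\frac{d-1}{d^2(d-1)-a^2}},
\]
which is the stated formula. The lemma assumes only $d\ge3$ and places no condition on $p$; this causes no difficulty, because the algebra above is valid for any real $p$ once $\lambda_i$ and $\delta_i$ are given by their defining formulas.

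There is no real obstacle here. The only step needing care is the simplification $(d-1)^i\cdot\big(d(d-1)\big)^{-2i}=\big(d^2(d-1)\big)^{-i}$, which is routine.
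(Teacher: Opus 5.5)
Your proof is correct and follows the paper's argument: both rewrite $\sum_i\lambda_i\delta_i^2$ as $\tfrac12\sum_i x^i/i$ with $x=(d^2-4dp+4p^2-d)^2/\big(d^2(d-1)\big)$ and then use the series for $-\log(1-x)$. Your explicit harmonic-series comparison for $x\ge 1$ is a little more careful than the paper's appeal to convergence exactly on $(-1,1)$ (the series also converges at $-1$). That imprecision is harmless here because $x\ge 0$.
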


\begin{proof}
We find that
  \[\sum _{i=1}^{\infty}\lambda _i\delta _i^2=
  \sum _{i=1}^{\infty} \frac{(d-1)^i}{2i}\left(\frac{d^2-4dp+4p^2-d}{d(d-1)}\right)^{2i}
  =\frac{1}{2}\sum _{i=1}^{\infty} \frac{1}{i}\left(\frac{(d^2-4dp+4p^2-d)^2}{d^2(d-1)}\right)^i.\]
Using the fact that (\ref{eq:inequality}) holds, the result follows from the MacLaurin series expansion of \(-\log(1-z)\), which converges if and only if 
  \(z\in (-1,1)\).
\end{proof}

This lemma is the first place where we use the assumption (\ref{eq:inequality}) in order for the SSCM to apply to $Y$. 


\section{The second moment} \label{s:mom2}

Observe that
\(\mathcal{P} _{n,d}|\, \Exp (Y^2)|\) counts the number of pairings along with an ordered pair of
\(p\)-orientations. That is, this expression counts triples of the form \((F,O_1,O_2)\) where \(F\) is a pairing
and \(O_1\) and \(O_2\) are two \(p\)-orientations of \(F\), not necessarily distinct.

Suppose that we are given such a triple $(F,O_1,O_2)$.
To each cell of $F$ we associate a triple a $(j,\alpha,\beta)$, where \(0\le j\le p\) and \(\alpha ,\beta \in \{0,1\}\), as follows. Let $j$ denote the number of 
points in the cell are special in both $O_1$ and $O_2$; set $\alpha=1$ if the cell is an in-vertex in $O_1$, and otherwise let $\alpha=0$; similarly, set $\beta=1$
if the cell is an in-vertex in $O_2$, and $\beta=0$ otherwise.
We also say that the cell is a \((j,\alpha ,\beta )\)-\emph{vertex} and call the set of all such \(v\) the \((j,\alpha ,\beta )\)-\emph{class}.

Our sum will involve the following parameters:
\begin{itemize}
\item $k$ denotes the number of cells which are in-vertices in both 
\(O_1\) and \(O_2\);
\item  For $0\leq j\leq p$ and $\alpha,\beta\in \{0,1\}$, let $k_{j\alpha\beta}$
denote the number of $(j,\alpha,\beta)$-vertices.
\end{itemize}
Before proceeding, we make some quick observations.
Since the number of in-vertices is equal to the number of out-vertices
we have $0\leq k\leq n/2$.
Note that every cell which is an in-vertex in both orientations
by definition has \(\alpha =\beta =1\). Using similar arguments for the other three sums, we have
\begin{equation}
  \label{eqn:k011}
  \sum _{j=0}^p k_{j00} = k,\qquad  
   \sum _{j=0}^p k_{j01} = \frac n2 - k, \qquad 
  \sum _{j=0}^p k_{j10} = \frac n2 - k, \qquad
  \sum _{j=0}^p k_{j11} = k.
\end{equation}
Consequently, the values \(k_{0\alpha \beta }\) are determined by the values of
\(k_{j\alpha \beta }\) for \(1\le j\le p\).
Therefore, we define the following set of possible parameters
\def\disgustingk{\left(k, \left(k_{j00},k_{j01},k_{j10},k_{j11}\right)_{j\in [p]}\right)}
\def\disgustingz{\left(z, \left(z_{j00},z_{j01},z_{j10},z_{j11}\right)_{j\in [p]}\right)}
\def\jabss{{\substack{0\le j\le p \\ \alpha ,\beta \in\{ 0,1\} }}}
\def\jabsso{{\substack{1\le j\le p \\ \alpha ,\beta \in \{ 0,1\} }}}
\[
I \coloneq  \left\{ \boldsymbol{k} \in \mathbb{N} _0^{4p+1}
\mid 
0\le k\le \frac n2 \text{ and (\ref{eqn:k011}) holds}
\right\}
\] 
where \(\boldsymbol{k} =\disgustingk\).  Also let
\begin{equation}
\label{eq:hatAhatB}
 \hat{A} = \hat{A}(\boldsymbol{k}):= pn + (d-4p)k  + \sum_{j\in [p]} \, j\big(k_{j00} + k_{j11} - k_{j01} - k_{j10}\big), \quad \hat{B} = \hat{B}(\boldsymbol{k}): = \dfrac{dn}{2} - \hat{A}.
\end{equation}

\bigskip

\begin{lemma}
Suppose that $d\geq 4$ and $1\leq p < d/2$.  
Then
    \begin{equation}
  \label{eqn:y2-asympt}
  \Exp (Y^2)=\sum _{\boldsymbol{k} \in I} J_n(\boldsymbol{k} )
\end{equation}
where
\[J_n(\boldsymbol{k} )=\frac{n!}{M(dn)} \Bigg(\prod _{\jabss}\frac{\tau _j^{k_{j\alpha \beta }}}{(k_{j\alpha \beta })!}\Bigg)\hat A!\, \hat B!.\]
\end{lemma}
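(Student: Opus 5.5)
The plan is a direct double count. $|\pairing_{n,d}|\,\Exp(Y^2)=M(dn)\,\Exp(Y^2)$ counts triples $(F,O_1,O_2)$. I will classify each triple by its parameter vector $\boldsymbol{k}\in I$ and show that the number of triples with a given $\boldsymbol{k}$ is $M(dn)\,J_n(\boldsymbol{k})$, where
\[
\tau_j=\binom{d}{j}\binom{d-j}{p-j}\binom{d-p}{p-j}=\frac{d!}{j!\,\big((p-j)!\big)^2\,(d-2p+j)!}.
\]
I expect this to be the intended meaning of $\tau_j$. Summing over $\boldsymbol{k}\in I$ then gives \eqref{eqn:y2-asympt}. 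The identities \eqref{eqn:k011} are automatic: each orientation has exactly $n/2$ in-vertices, so every triple has a vector in $I$, and conversely every vector in $I$ can arise.

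First, I assign cells to classes. There are $n!/\prod_{j,\alpha,\beta}k_{j\alpha\beta}!$ ways to assign the $n$ labelled cells to the $(j,\alpha,\beta)$-classes with the prescribed sizes. Next I choose the special points in each cell. For a $(j,\alpha,\beta)$-vertex I need a $p$-set $S_1$ (special in $O_1$) and a $p$-set $S_2$ (special in $O_2$) with $|S_1\cap S_2|=j$. This can be done in $\tau_j$ ways: choose the intersection, then the rest of $S_1$, then the rest of $S_2$ from the $d-p$ points outside $S_1$. The count does not depend on $(\alpha,\beta)$, which produces the factor $\prod \tau_j^{k_{j\alpha\beta}}$.

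Once the classes and special points are fixed, every point has a label (in/out for $O_1$, in/out for $O_2$). For an in-vertex the in-points are the special points, and for an out-vertex they are the non-special points. A pair of $F$ must join an $O_1$-in-point to an $O_1$-out-point, and likewise for $O_2$. Hence (in,in)-points must be matched to (out,out)-points, and (in,out)-points to (out,in)-points. Conversely, any such matching gives a valid triple. The key step is to compute these type counts:
\begin{itemize}
\item in a $(j,1,1)$-cell there are $j$ (in,in)-points;
\item in a $(j,0,0)$-cell there are $d-2p+j$ (in,in)-points;
\item in a $(j,1,0)$-cell or a $(j,0,1)$-cell there are $p-j$ (in,in)-points.
\end{itemize}
Summing with \eqref{eqn:k011} gives
\[
k(d-2p)+pn-2pk+\sum_{j\in[p]} j\big(k_{j00}+k_{j11}-k_{j01}-k_{j10}\big)=\hat A .
\]
Since there are $dn/2$ in-points for each orientation, the number of (in,out)-points is $dn/2-\hat A=\hat B$. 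The same count shows there are $\hat B$ (out,in)-points and $\dfrac{dn}{2}-\hat B=\hat A$ (out,out)-points. The compatible pairings therefore number $\hat A!\,\hat B!$.

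Multiplying the three factors and dividing by $M(dn)$ gives $J_n(\boldsymbol{k})$. The only delicate step is the type bookkeeping in the key step: checking that the (in,in) and (out,out) counts agree and that they equal $\hat A$ exactly as written in \eqref{eq:hatAhatB}. Everything else is routine multinomial counting.
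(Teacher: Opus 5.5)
Your proposal is correct and follows the paper's proof essentially step for step. Like the paper, you classify the cells in $n!/\prod k_{j\alpha\beta}!$ ways and choose the special points in $\tau_j$ ways per cell; your factorisation $\binom{d}{j}\binom{d-j}{p-j}\binom{d-p}{p-j}$ equals the paper's $\binom{d}{p}\binom{p}{j}\binom{d-p}{p-j}$. You then match (in,in) with (out,out) points and (in,out) with (out,in) points in $\hat A!\,\hat B!$ ways. Your per-class counts ($j$ for a $(j,1,1)$-cell, $d-2p+j$ for a $(j,0,0)$-cell) are the ones consistent with the convention that $\alpha=1$ means an in-vertex, whereas the paper's cases (i) and (ii) list them the other way round. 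This is harmless because the $(\cdot,0,0)$ and $(\cdot,1,1)$ classes each contain exactly $k$ cells, so the total is $\hat A$ either way.
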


\begin{proof}
We must
count the number of triples \((F,O_1,O_2)\) with the parameter $\boldsymbol{k}$.
Note that since the \((j,\alpha ,\beta )\)-classes partition the cells of
a pairing \(F\in \mathcal{P} _{n,d}\), there are exactly
\begin{equation}
  \label{eqn:ways-of-classifying}
  \frac{n!}{\displaystyle{\prod _{\jabss}k_{j\alpha \beta }!}}
\end{equation}
ways to classify the vertices of \(F\).
If \(v\) is a \((j,\alpha ,\beta )\)-vertex then there are \(\binom dp\) ways of choosing
the special points of \(O_1\), followed by \(\binom{p}{j}\) ways of choosing a subset of
the special points of \(O_1\) which are special for \(O_2\) also.
Finally, there are \(\binom{d-p}{p-j}\) ways
of choosing the remaining special points of \(O_2\).
For convenience, define
\begin{equation}
\label{eq:tau}
\tau _j\coloneq \binom{d}{p}\binom{p}{j}\binom{d-p}{p-j}
\end{equation}
By definition, \(\tau _j\) counts the number of ways of choosing the special
points for \(v\) in both orientations \(O_1\) and \(O_2\) such that they have
\(j\) special points in common.
Therefore, in total, there are
\begin{equation}
  \label{eqn:tauj}
  \prod _{\jabss}\tau _j^{k_{j\alpha \beta }}
\end{equation}
ways of choosing all of the special points for \(F\)
in both orientations \(O_1\) and \(O_2\).

It remains to count the number of pairings corresponding to a choice of special points
in \(O_1\) and \(O_2\) consistent with the parameter \(\boldsymbol{k} \).
Call a point \(q\) in the pairing \(F\in \mathcal{P} _{n,d}\) an \((in,in)\)-\emph{point}
if it belongs to an in-vertex in both orientations and an
\((out,out)\)-\emph{point} if it belongs to an out-vertex in both orientations.
If \(q\) belongs to a cell which is an in-vertex in
\(O_1\) and an out-vertex in \(O_2\) we call it an \((in,out)\)-\emph{point}; if the same holds with the roles of $O_1$ and $O_2$ exchanged then we call it an (\emph{out,out})-\emph{point}.
Since in each orientation an in-point must be paired with an out-point,
each \((\text{in},\text{in})\)-point must be paired with an
\((\text{out},\text{out})\)-point and each \((\text{in},\text{out})\)-point must be paired with
an \((\text{out},\text{in})\)-point.

We count the number of \((\text{in},\text{in})\)-points corresponding
to vertices of each \((j,\alpha ,\beta )\)-class. There are four cases:
\begin{enumerate}[(i)]
\item If \((\alpha ,\beta )=(0,0)\), then an \((\text{in},\text{in})\)-point is a point which is special in
  both \(O_1\) and \(O_2\). Thus, there are \(jk_{j00}\) such \((\text{in},\text{in})\)-points.
\item If \((\alpha ,\beta )=(1,1)\), then an \((\text{in},\text{in})\)-point is a point which is special in
  neither \(O_1\) nor \(O_2\). Thus, by inclusion-exclusion, there are 
  \((d-2p+j)k_{j11}\)
  such \((\text{in},\text{in})\)-points.
\item If \((\alpha ,\beta )=(1,0)\), then an \((\text{in},\text{in})\)-point is a point which is special in
  \(O_2\) but not in \(O_1\). Thus, there are \((p-j)k_{j10}\) such \((\text{in},\text{in})\)-points.
\item If \((\alpha ,\beta )=(0,1)\), then, by symmetry with case (iii), there are
  \((p-j)k_{j01}\) such \((\text{in},\text{in})\)-points.
\end{enumerate}
Summing over all of the possibilities and applying (\ref{eqn:k011}) and (\ref{eq:hatAhatB}), we see that there are exactly
\[\sum _{j=0}^p \big(\, jk_{j00}+(d-2p+j)k_{j11}+(p-j)(k_{j01}+k_{j10})\, \big)
 = \hat{A}
\]
\((\text{in},\text{in})\)-points, and by symmetry there are the same number of (out,out)-points. Hence there are $\hat{A}!$
possible ways to pair up the \((\text{in},\text{in})\)-points with \((\text{out},\text{out})\)-points.  By symmetry, half of the remaining points must
be \((\text{in},\text{out})\)-points and the other half must be
\((\text{out},\text{in})\)-points, as these must be paired with each other. Therefore there are
$\hat B =   \frac{dn}2-\hat{A}$
\((\text{in},\text{out})\)-points and hence $\hat{B}!$
possible ways of matching \((\text{in},\text{out})\)-points with \((\text{out},\text{in})\)-points.
Now the proof of the lemma is completed by
multiplying equations (\ref{eqn:ways-of-classifying}),
(\ref{eqn:tauj}) by $\hat{A}!\, \hat{B}!$, dividing by $|\mathcal{P}_{n,d}| = M(dn)$  and summing over all  \(\boldsymbol{k}\in I \).
\end{proof}

Before proceeding with the asymptotic evaluation of this expression,
we establish a few combinatorial identities involving the quantities
\(\tau _0,\tau _1,\dots,\tau _p\) defined in (\ref{eq:tau}).
These identities follow from the fact that dividing these quantities
by $\binom{d}{p}^2$ gives
a hypergeometric distribution with parameters $(d,p,p)$, but for completeness we include a brief proof.

\begin{lemma}
  \label{lemma:tau-sums}
  For all \(s\in \mathbb{N} _0\), we have
  \[\sum _{j=0}^p(j)_s\tau _{j}= (p)_s\binom{d}{p}\binom{d-s}{p-s}.\]
  In particular, the following identities hold:
  \begin{align*}
    \sum _{j=0}^p \tau _j = \binom{d}{p}^2, \quad
    \sum _{j=0}^p j\tau _j = \frac{p^2}{d}\binom{d}{p}^2, \text{ and}\quad
    \sum _{j=0}^p j^2\tau _j = \frac{p^2(p^2-2p+d)}{d(d-1)}\binom{d}{p}^2.
  \end{align*}
\end{lemma}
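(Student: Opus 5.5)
The plan is to prove the general identity by a direct combinatorial or algebraic manipulation, and then read off the three special cases. By (\ref{eq:tau}) we have $\tau_j=\binom{d}{p}\binom{p}{j}\binom{d-p}{p-j}$, so it suffices to show
\[
\sum_{j=0}^p (j)_s\binom{p}{j}\binom{d-p}{p-j} = (p)_s\binom{d-s}{p-s}.
\]
The key step is the absorption identity $(j)_s\binom{p}{j} = (p)_s\binom{p-s}{j-s}$, valid for all $j$ with the convention that binomials with negative lower index vanish. Substituting this gives
\[
\sum_{j} (j)_s\binom{p}{j}\binom{d-p}{p-j} = (p)_s\sum_{j}\binom{p-s}{j-s}\binom{d-p}{p-j}.
\]
The remaining sum is evaluated by Vandermonde's convolution. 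Setting $i=j-s$, the lower indices sum to $(j-s)+(p-j)=p-s$, so the sum equals $\binom{d-s}{p-s}$.

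There is also a combinatorial reading, which I would mention as a sanity check. The quantity $\binom{p}{j}\binom{d-p}{p-j}$ counts the $p$-subsets $B$ of a $d$-set meeting a fixed $p$-set $A$ in exactly $j$ points. Weighting by $(j)_s$ counts pairs $(B,\text{ordered }s\text{-tuple in }A\cap B)$. Counting the other way, we first choose the tuple in $(p)_s$ ways and then choose $B$ containing it in $\binom{d-s}{p-s}$ ways. For $s>p$ both sides vanish, so the identity holds for all $s\in\mathbb{N}_0$.

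The special cases then follow by substitution:
\begin{itemize}
\item $s=0$ gives $\binom{d}{p}^2$.
\item $s=1$ gives $p\binom{d}{p}\binom{d-1}{p-1}=\frac{p^2}{d}\binom{d}{p}^2$, using $\binom{d-1}{p-1}=\frac{p}{d}\binom{d}{p}$.
\item For $j^2$, write $j^2=(j)_2+j$. The $(j)_2$ term gives $p(p-1)\binom{d}{p}\binom{d-2}{p-2}=\frac{p^2(p-1)^2}{d(d-1)}\binom{d}{p}^2$, using $\binom{d-2}{p-2}=\frac{p(p-1)}{d(d-1)}\binom{d}{p}$.
\end{itemize}
Adding the $(j)_2$ and $j$ contributions gives
\[
\frac{p^2\big((p-1)^2+(d-1)\big)}{d(d-1)}\binom{d}{p}^2=\frac{p^2(p^2-2p+d)}{d(d-1)}\binom{d}{p}^2 ,
\]
as required.

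No step is a genuine obstacle. The only points needing care are the boundary conventions for binomials when $j<s$ or $s>p$, and the small algebraic simplification in the $j^2$ case.
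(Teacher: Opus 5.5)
Your proof is correct, and your combinatorial ``sanity check'' is exactly the paper's argument. The paper double counts pairs of special sets for $O_1,O_2$ together with an ordered $s$-tuple of common special points, choosing $O_1$'s special set freely (hence the factor $\binom{d}{p}$ that you removed by fixing $A$). Your primary absorption-plus-Vandermonde computation is the algebraic transcription of that same count. Your explicit $s=0,1,2$ specialisations via $j^2=(j)_2+j$ fill in the step the paper leaves as following from the first identity.
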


\begin{proof}
  The term \((j)_s\tau _j\) counts exactly the number of ways of choosing
  the special points of a cell \(v\) for two orientations \(O_1\) and \(O_2\) such
  that they share \(j\)
  special points then choosing a sequence of \(s\) points from \(v\) which are special in both
  orientations.
  Summing over all \(j\), we count the number of ways of choosing the special points for \(v\)
  in two orientations along with a sequence of \(s\) points  which are special in both orientations.
  However, there are \(\binom{d}{p}\) ways of choosing the special
  points of \(O_1\) followed by \((p)_s\) ways of choosing the sequence of \(s\) shared special
  points then \(\binom{d-s}{p-s}\) ways of choosing the remaining special points of
  \(O_2\). This proves the first identity.  The remaining identities follow from the first.
\end{proof}

\subsection{Asymptotic expression for the second moment} \label{ss:Ey2-asympt-sum}

In this section we  consider the asymptotic behaviour of the summand \(J_n(\boldsymbol{k} )\) from 
(\ref{eqn:y2-asympt})
as \(n\rightarrow \infty \),
where \(\boldsymbol{k} \) is allowed to vary as a function of \(n\).
It is now useful to scale all our variables by a factor of $1/n$.
Given \(\boldsymbol{k} \in I\), let
\(\boldsymbol{z} =\boldsymbol{z} (\boldsymbol{k} )\coloneq \boldsymbol{k} /n\in \mathbb{R} ^{4p+1}\). We name the coordinates analogously, 
that is,
\[\boldsymbol{z} \coloneq \disgustingz=
\left(\frac kn, \left(\frac{k_{j00}}n,\frac{k_{j01}}n,\frac{k_{j10}}n,\frac{k_{j11}}n\right)_{j\in [p]}\right).
\]
Then define \(z_{0\alpha \beta }\coloneq k_{0\alpha \beta }/n\) for all \(\alpha ,\beta \in \{0,1\}\), and note that by (\ref{eqn:k011}),
\begin{equation}
  \label{eqn:implicit-z0ab}
  \sum _{j=0}^pz_{j\alpha \beta } =
  \begin{cases}
    z & \text{if } \,\, \alpha  =\beta, \\
    \tfrac{1}{2}-z & \text{if } \,\, \alpha \neq \beta.
  \end{cases}
\end{equation}
By definition, \(z_{j\alpha \beta }\) is the proportion of the vertices which are
\((j,\alpha ,\beta )\)-vertices and \(z\) is the proportion of
vertices which are in-vertices in both orientations.
Note that the range of possible values of \(\boldsymbol{z} \) lies in the set
\begin{align*}
  K&\coloneq \bigg\{
  \boldsymbol{z}  \in  \mathbb{R} ^{4p+1} \mid 
  0\le z\le 1/2,\,\,
  \sum _{j\in [p]} z_{j00},\sum _{j\in [p]} z_{j11} \le  z,\,\,
  \sum _{j\in [p]} z_{j10}, \sum _{j\in [p]} z_{j01}\le \dfrac12-z,
  \\
  &
  \hspace{6cm}
  \text{ and } \,z_{j\alpha \beta }\ge 0 \,\text{ for all } j\in [p]
  \text{ and } \alpha ,\beta \in \{0,1\}
  \bigg\}.
\end{align*}
Note that $K$ is a compact convex set with nonempty interior $K^{\circ}$.
In fact, \(\varphi \) extends continuously to the boundary of \(K\)
using the convention that \({0\log(0)=0}\).

Let \(A\coloneq \hat A/n\) and \(B\coloneq \hat B/n\), so that
\begin{equation} 
\label{eq:A-def}
A = A(\boldsymbol{z}) = p + (d-4p)z + \sum_{j\in [p]} j\big( z_{j00} + z_{j11} - z_{j01} - z_{j10}),
\quad B = B(\boldsymbol{z}) = \dfrac{d}{2}-A.
\end{equation}
These quantities depend only on \(\boldsymbol{z} =\boldsymbol{k} /n\).

\bigskip

\begin{lemma}
\label{lem:J-asymptotic}
Suppose that $d\geq 4$ and $1\leq p < d/2$.  
Define
\begin{equation}
  \label{eqn:bn-def}
  b_n\coloneq 
  \frac{(2\pi n)^{-(4p+1)/2}\, d^{-dn/2}}{\sqrt{2}}
\end{equation}
and  let \(\psi \), \(\varphi: K^\circ\to \mathbb{R}\) be functions of \(\boldsymbol{z} \) defined on the interior of
\(K\) by
\begin{equation}
  \label{eqn:psi-def}
  \psi (\boldsymbol{z} )\coloneq \sqrt{AB}\, \Bigg(\prod _{\jabss}z_{j\alpha \beta }\Bigg)^{-1/2}
\end{equation}
and
\begin{align}
  \varphi (\boldsymbol{z} )&\coloneq 
  A\log A + B\log B + \sum _{\jabss}\left(z_{j\alpha \beta }\log(\tau _j)-z_{j\alpha \beta }\log(z_{j\alpha \beta })\right).
  \label{eqn:phi-def}
\end{align}
Then
\[
J_n(\boldsymbol{k} )\sim E(\boldsymbol{k},n)\, b_n\, \psi (\boldsymbol{k} /n)\, e^{n\varphi (\boldsymbol{k} /n)}, 
\]
where $E(\boldsymbol{k},n)$ is some function which satisfies
$E(\boldsymbol{k},n) = O(1)$ for all $\boldsymbol{k}\in I$, 
and
$E(\boldsymbol{k},n) = 1 + o(1)$ whenever
$k_{j\alpha\beta}\to\infty$  for all $(j,\alpha,\beta)\in [p]\times\{0,1\}^2$.
\end{lemma}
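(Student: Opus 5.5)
The plan is to apply Stirling's formula to every factorial in $J_n(\boldsymbol{k})$ and collect the result into the stated form. Throughout we use the version of Stirling valid for all $m\geq 0$:
\[ m! = \varepsilon(m)\sqrt{2\pi m}\,(m/e)^m, \]
with $\varepsilon(m)$ bounded above and below by absolute positive constants, and $\varepsilon(m)\to1$ as $m\to\infty$. For $m=0$ we read the right-hand side with $\sqrt{2\pi m}$ replaced by $1$. We apply this to:
\begin{itemize}
\item $n!$;
\item $M(dn)=(dn)!/((dn/2)!\,2^{dn/2})$;
\item $\hat A!$ and $\hat B!$;
\item the $4p+4$ factorials $k_{j\alpha\beta}!$, now including $j=0$.
\end{itemize}
Since $\hat A=An$, $\hat B=Bn$ and $k_{j\alpha\beta}=nz_{j\alpha\beta}$, the exponential parts combine as follows.

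First,
\[ M(dn)\sim \sqrt2\,(dn/e)^{dn/2}. \]
The factor $n!$ contributes $\sqrt{2\pi n}\,(n/e)^n$. The product $\hat A!\,\hat B!$ contributes
\[ 2\pi n\sqrt{AB}\;(n/e)^{(A+B)n}\,e^{n(A\log A+B\log B)}, \]
where $A+B=d/2$. The classification factorials contribute
\[ \prod (2\pi n z_{j\alpha\beta})^{-1/2}\,(n/e)^{-\sum k_{j\alpha\beta}}\,e^{-n\sum z_{j\alpha\beta}\log z_{j\alpha\beta}}, \]
and $\sum_{j,\alpha,\beta}k_{j\alpha\beta}=n$ by (\ref{eqn:k011}). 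Finally, $\prod\tau_j^{k_{j\alpha\beta}}=e^{n\sum z_{j\alpha\beta}\log\tau_j}$.

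The powers of $n/e$ now cancel: the numerator contributes $n+dn/2$ and the denominator contributes $dn/2+n$. What is left of the $d$-powers is $d^{-dn/2}$, and this goes into $b_n$. This leaves exactly $e^{n\varphi(\boldsymbol{z})}$ in the exponent.

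The polynomial prefactors come to
\[ \sqrt{2\pi n}\cdot 2\pi n\cdot(2\pi n)^{-(4p+4)/2}\big/\sqrt2=(2\pi n)^{-(4p+1)/2}/\sqrt2, \]
multiplied by $\sqrt{AB}\,\big(\prod z_{j\alpha\beta}\big)^{-1/2}=\psi$. This matches $b_n\psi$. The $\sqrt{2}$ comes from $M(dn)$, since $\sqrt{2\pi dn}/\sqrt{\pi dn}=\sqrt2$.

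It remains to define $E(\boldsymbol{k},n)$ and control it. We set $E$ to be the product of the $\varepsilon$ correction factors: $\varepsilon(\hat A)\varepsilon(\hat B)\prod\varepsilon(k_{j\alpha\beta})^{-1}$, times the factors for $n!$ and $M(dn)$, which are $1+o(1)$. We also need a convention for when some $k_{j\alpha\beta}=0$ and $\psi$ blows up; there $E$ absorbs the mismatch, for instance by interpreting the $\sqrt{k}$ factor as $\max(k,1)$. Under this convention $E=O(1)$ uniformly.

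The point requiring care is the claim $E=1+o(1)$ when $k_{j\alpha\beta}\to\infty$ for all $j\in[p]$. This condition says nothing directly about $k_{0\alpha\beta}$, $\hat A$ or $\hat B$, so we must show these also tend to infinity. We do this from the formulas:
\begin{itemize}
\item In the expression $\sum_j\big(jk_{j00}+(d-2p+j)k_{j11}+(p-j)(k_{j01}+k_{j10})\big)$, the coefficient of $k_{111}$ is $d-2p+1\geq 1$, so $\hat A\geq k_{111}\to\infty$.
\item The analogous expression for $\hat B$ has positive coefficient on $k_{101}$, giving $\hat B\to\infty$.
\item For $k_{0\alpha\beta}$: it follows from (\ref{eqn:k011}) that $k\geq\sum_{j\geq1}k_{j11}\to\infty$, and similarly $n/2-k\to\infty$. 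However, $k_{0\alpha\beta}$ itself could still be small.
\end{itemize}
So I expect the intended reading is that $k_{j\alpha\beta}\to\infty$ is required for all $j$ including $0$; otherwise the $j=0$ factor would need exactly this convention. I would state the convention explicitly and verify that the remaining Stirling errors are $1+o(1)$.
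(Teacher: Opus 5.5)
Your proposal is correct and takes the same route as the paper. The paper's proof applies Stirling's formula in the form $N! = \sqrt{2\pi (N\vee 1)}\,(N/e)^N e^{O(1/(N+1))}$ (the same $\max(N,1)$ convention you adopt for zero arguments) to every factorial in $J_n(\boldsymbol{k})$ and simplifies, and your bookkeeping of the powers of $n/e$, the $d^{-dn/2}$, the $(2\pi n)^{-(4p+1)/2}/\sqrt{2}$ prefactor and the exponent $n\varphi$ is exactly that simplification written out.

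Your remark about $k_{0\alpha\beta}$ is also right. The paper only claims $E=1+o(1)$ ``if all factorial arguments tend to infinity'', which implicitly includes $j=0$. This imprecision in the stated hypothesis is harmless: in the application (condition (vii) of Lemma~\ref{lemma:laplace-summation}, on a neighbourhood $K_1$ of the interior point $\boldsymbol{z}^\ast$), all the $k_{j\alpha\beta}$, as well as $\hat A$ and $\hat B$, are of order $n$.
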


\begin{proof}
Let $x\vee y$ denote $\max\{x, y\}$.
The result is proved by applying Stirling's formula in the form
\[ N! = \sqrt{2\pi (N\vee 1)}\, \left(\frac{N}{e}\right)^N\, \, e^{O(1/(N+1))}
\]
which is valid for all nonnegative integers.
to (\ref{eq:M-def}), and simplifying.
Note that for any \(\boldsymbol{k}\in I \) (possibly depending on \(n\)),
the error factor from Stirling's formula is always $O(1)$,
while if all factorial arguments tend to infinity then the error factor is $1+o(1)$, as claimed. 
\end{proof}

\bigskip

The function \(\varphi \) has a natural symmetry, which will be useful later:
\begin{equation}
  \label{eqn:phisymm}
  \varphi \left(z,(z_{j00},z_{j01},z_{j10},z_{j11})_{j\in [p]}\right)
  =
  \varphi \left(\dfrac{1}{2}-z,(z_{j01},z_{j00},z_{j11},z_{j10})_{j\in [p]}\right).
\end{equation}
This symmetry has a combinatorial meaning: If \(O_1\) and \(O_2\) are a
pair of orientations of \(F\in \mathcal{P} _{n,d}\)
corresponding to \(\boldsymbol{z} \), then the pair $(O_1,O_2^{-1})$ 
corresponds to the parameters
\(\boldsymbol{z} ^{-1}\coloneq \left(\frac{1}{2}-z,(z_{j01},z_{j00},z_{j11},z_{j10})_{j\in [p]}\right)\), where $O_2^{-1}$ is
the dual orientation of $O_2$, obtained by reversing the direction of every
edge.
Since taking the dual is an involution, this gives a bijection between
triples corresponding to \(\boldsymbol{z} \) and those corresponding to \(\boldsymbol{z} ^{-1}\).
\(\varphi \).

\bigskip
We define the point \(\boldsymbol{z} ^{\ast}=\left(z^{\ast},(z_{j00}^{\ast},z_{j01}^{\ast},z_{j10}^{\ast},z_{j11}^{\ast})_{j\in [p]}\right)\) as follows:
\begin{equation}
  \label{eqn:equation-summat}
  z^{\ast} \coloneq  \dfrac{1}{4} \quad\text{and}\quad z_{j\alpha \beta }^{\ast}\coloneq \frac{\tau _j}{4\binom{d}{p}^2},
\end{equation}
for all $(j,\alpha,\beta)\in [p]\times\{0,1\}^2$.
Since \(z^{\ast}=1/4=1/2-z^{\ast}\), using (\ref{eqn:implicit-z0ab}) together
with
Lemma \ref{lemma:tau-sums} implies that for any \(\alpha ,\beta \in \{0,1\}\), 
\[z_{0\alpha \beta }^{\ast}=\dfrac{1}{4}-\sum _{j\in [p]} z_{j\alpha \beta }^{\ast}=\frac{\binom{d}{p}^2-\sum _{j\in [p]} \tau _j}{4\binom{d}{p}^2}
=\frac{\tau _0}{4\binom{d}{p}^2}.\]
Hence the equation for $z_{j\alpha\beta}^\ast$ given in (\ref{eqn:equation-summat}) 
is also valid when $j=0$.

We remark that the point \(\boldsymbol{z} ^{\ast}\) has a natural combinatorial interpretation,
as it corresponds to choosing the special points for $O_1$ and $O_2$ in each cell
independently and uniformly, as well as independently and uniformly deciding for
each cell whether it is an in-vertex or out-vertex in each of $O_1$, $O_2$.
In this setting, the probability that a given cell is a 
 \((j,\alpha ,\beta )\)-vertex is given by
\[
  \Pr \left[v\text{ is a } (j,\alpha ,\beta )\text{-vertex}\right] = \frac{\tau _j}{4\binom dp^2}=z^{\ast}_{j\alpha \beta }.
\]

\bigskip

The key technical part of the second moment analysis is contained in the the following lemma, which is proved in Appendix~\ref{s:hard}.

\begin{lemma}
  \label{eqn:hard-phi-maximum-lemma}
  Suppose that $(d,p)$ is one of the pairs given in \emph{\eqref{eq:list}}.
  Then \(\varphi \) has a unique maximum on $K$, which is attained at the point \(\boldsymbol{z} ^{\ast} \in K^\circ\).
\end{lemma}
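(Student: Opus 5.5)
The plan is to reduce the $(4p+1)$-dimensional maximisation to a low-dimensional problem by optimising out the ``inner'' variables $z_{j\alpha\beta}$ exactly, and then handle the remaining one- or two-variable problem by calculus plus rigorous numerics for the five pairs in \eqref{eq:list}.

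\textbf{Step 1: reduce to two variables.} First we record that $\varphi(\boldsymbol{z}^\ast)=\log\binom{d}{p}^2+\log 4 + \frac d2\log\frac d4$, so that $e^{n\varphi(\boldsymbol{z}^\ast)}$ matches $(\Exp Y)^2$ up to polynomial factors; this is only a sanity check. Next, observe that $\varphi$ depends on the $z_{j\alpha\beta}$ only through the entropy-like terms and through the single linear functional $A$. Hence we fix $z$ together with the value $a:=\sum_{j}j(z_{j00}+z_{j11}-z_{j01}-z_{j10})$, so that $A=p+(d-4p)z+a$ is fixed. We then maximise $\sum(z_{j\alpha\beta}\log\tau_j-z_{j\alpha\beta}\log z_{j\alpha\beta})$ subject to the four class-sum constraints \eqref{eqn:implicit-z0ab} and the constraint on $a$. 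This is a strictly concave problem, and Lagrange multipliers give the exponential-tilting form
\[
z_{j\alpha\beta}=c_{\alpha\beta}\,\tau_j x^{\pm j},
\]
with exponent $+j$ when $\alpha=\beta$ and $-j$ when $\alpha\neq\beta$. The constant $c_{\alpha\beta}$ is fixed by the class sum, that is, normalised by the generating polynomial $T(x):=\sum_j\tau_jx^j$ or by $T(1/x)$. Since the constraints force the $(0,0)$ and $(1,1)$ classes, and likewise the $(0,1)$ and $(1,0)$ classes, to behave identically, the problem collapses to a function $\Phi(z,x)$ with $z\in[0,1/2]$ and $x>0$, or more conveniently to separate tilts $x$ for the diagonal classes and $y$ for the off-diagonal ones. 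Strict concavity of the inner problem gives uniqueness of the inner maximiser, including on the boundary of $K$, because the entropy term has infinite slope at $0$. Consequently any global maximiser of $\varphi$ lies in $K^\circ$ and is of this tilted form. It then suffices to show that $\Phi$ has the unique maximum $z=1/4$, $x=y=1$.

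\textbf{Step 2: the reduced problem.} Set the derivatives of $\Phi$ to zero; these stationarity equations involve $\log(A/B)$ and $T'(x)/T(x)$. Using the symmetry \eqref{eqn:phisymm}, namely $z\mapsto 1/2-z$ with $x\leftrightarrow y$ swapped, we verify that $(1/4,1,1)$ is stationary. We then compute the Hessian there using Lemma~\ref{lemma:tau-sums}, which gives the mean $p^2/d$ and the variance of the hypergeometric distribution. We expect negative definiteness of this Hessian to be equivalent to something like $\big(d^2-4dp+4p^2-d\big)^2<d^2(d-1)$, which is consistent with Lemma~\ref{lemma:expsum-lemma}. This settles local maximality.

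\textbf{Step 3: global uniqueness, the main obstacle.} We expect to exclude other maxima, such as near-correlated pairs of orientations with $z$ near $0$ or $1/2$, only case by case. For each of the five pairs, $T$ is an explicit polynomial with integer coefficients, so $\Phi$ is explicit in two or three real variables. We would first try to eliminate $z$ analytically, since for fixed tilts the $z$-dependence is a one-dimensional concave-plus-$A\log A+B\log B$ expression whose maximiser is obtained from a monotone equation. This leaves a one-variable function of $x$, or of $x$ and $y$, on a compact range after a change of variables such as $x=e^{t}$ with $t$ bounded using the boundary behaviour. We would then certify $\Phi<\Phi(1/4,1,1)$ away from a small neighbourhood by interval arithmetic on a grid, with Lipschitz bounds on the derivatives. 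Inside the neighbourhood we would use the Hessian bound together with a third-derivative estimate. The points where $\Phi$ comes close to the maximum, notably the dual-orientation and equal-orientation extremes $x\to\infty$ or $x\to 0$, correspond to $\varphi$-values $\tfrac12\log$ of the first-moment growth. Exactly because $2^{1-d/2}\binom dp>1$, these are strictly smaller, and checking this is where care is needed.
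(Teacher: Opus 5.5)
Your Step 1 is correct, and it is a cleaner route to the family the paper uses. The single constraint on $a$ gives one Lagrange multiplier and hence one common tilt. With $\theta=T$, the family $z_{j\alpha\alpha}=z\tau_jx^j/\theta(x)$, $z_{j\alpha\beta}=(\tfrac12-z)\tau_jx^{-j}/\theta(1/x)$ for $\alpha\neq\beta$ is exactly the paper's parametrisation (\ref{eqn:zjab-theta-param}), which the paper derives only at critical points. In particular your second tilt $y$ is superfluous.

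However, the conclusion ``any global maximiser of $\varphi$ lies in $K^\circ$'' does not follow. The slices $z=0$ and $z=\tfrac12$ lie entirely in $\partial K$, and the extreme values of $a$ correspond to $x\in\{0,\infty\}$. What Step 1 actually gives is that $\max_K\varphi$ equals the maximum of $\Phi$ on the compactified rectangle $[0,\tfrac12]\times[0,\infty]$, with uniqueness transferring back to $\varphi$.

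The sides of that rectangle can then be handled by the infinite-slope mechanism you already use inside slices (the mechanism of Lemma~\ref{lemma:Kboundary}), now applied also in the $z$-direction. For example, $\partial_z\Phi=-2\log z+O(1)$ as $z\to0^+$ for fixed $x\in(0,\infty)$, because $A$ stays positive there. This argument fails only where $A=0$ or $B=0$, namely at the two corners $O_2=O_1^{-1}$ and $O_2=O_1$. At those corners your comparison via $2^{1-d/2}\binom dp>1$ is exactly what the paper does at $\hat{\boldsymbol a}$. (The paper also maximises $\varphi_0$ over the face $z=0$ using a second polynomial $g$ and a Mean Value Theorem estimate; the blow-up in the $z$-direction makes that step avoidable.)

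The genuine gap is Step 3. It carries the whole content of the lemma, namely that there is no second interior critical point, yet it is only a plan for a certified computation that you do not carry out. The one analytic shortcut you propose fails. At fixed tilt, $A$ is affine in $z$, so $A\log A+B\log B$ is convex in $z$. For instance, in the limit $x\to\infty$ one has $B=(d-2p)(\tfrac12-z)$ and $\partial_z^2\Phi=(d-2p)^2/A-2/z+(d-2p-2)/(\tfrac12-z)$. This is positive near $z=\tfrac12$ for all five pairs, so the $z$-sections are not concave and there is no monotone equation for the maximiser. Lipschitz bounds are also unavailable near the sides, where $-z\log z$ and $A\log A$ have unbounded derivatives.

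The idea you are missing is that the interior critical points can be counted exactly. On your family, $\partial_x\Phi=0$ is equivalent to $x=A/B$, since $\log x$ is the Lagrange multiplier for $a$. Given this, $\partial_z\Phi=0$ is equivalent to $x^{(d-4p)/2}=2z\,\theta(1/x)/((1-2z)\theta(x))$. Together these are the paper's relations $y=A/B=R$.

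The first relation is linear in $z$, and the second involves $z$ only through $2z/(1-2z)$. So $z$ can be eliminated exactly, leaving one polynomial equation $f(w)=w^{d-2p}f_1(w^2)+f_2(w^2)=0$ in $w=\sqrt x$. Since $f_2$ is minus the reciprocal polynomial of $f_1$, one has $f(1/w)=-w^{-(d+2)}f(w)$. Hence it suffices that $f$ has no root in $(1,\infty)$ (Lemma~\ref{lemma:polynomial-criterion}). For each of the five pairs this holds because every Taylor coefficient of $f$ at $w=1$ is nonnegative.

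This finite exact check replaces both your two-dimensional grid search and the Hessian-plus-third-derivative patch near $\boldsymbol{z}^\ast$. Without it, or an actually executed certified computation, your Step 3 does not establish uniqueness.
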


We now evaluate \(\varphi \) and \(\psi \) at the point \(\boldsymbol{z} ^{\ast}\) for future reference.
Lemma \ref{lemma:tau-sums} implies that
\(A(\boldsymbol{z} ^{\ast})=B(\boldsymbol{z} ^{\ast})=d/4,\)
and further that
\begin{align}
  \varphi (\boldsymbol{z} ^{\ast})&=\frac d2 \log\left(\frac d4\right)
  + 4 \sum _{j=0}^p \frac{\tau _j}{4\binom dp^2}\left[\log(\tau _j)-\log\left(\frac{\tau _j}{4\binom dp^2}\right)\right] \nonumber\\
  &= \frac d2 \log\left(\frac d4\right)
  + 2\log\left[2\binom dp\right]. \label{eqn:phi(zstar)}
\end{align}
Similarly,
\begin{equation}
  \label{eqn:psi-zstar}
  \psi(\boldsymbol{z}^{\ast})= \frac{d}{4} \,  \left( \prod_{j=0}^p \frac{4\binom{d}{p}^2}{\tau _j}\right)^2
  = \frac{d\, \left(2\binom{d}{p}\right)^{4(p+1)}}{4\, \prod _{j=0}^p\tau _j^2}.
\end{equation}

\bigskip

For future reference we record the partial derivatives of
$\varphi$ here.
From (\ref{eq:A-def}) we calculate
\[\frac{\partial A}{\partial z}=-\frac{\partial B}{\partial z}=d-4p
\quad\text{and}\quad
\frac{\partial A}{\partial z_{j\alpha \beta }}=-\frac{\partial B}{\partial z_{j\alpha \beta }}
= (-1)^{\alpha +\beta } j\]
for \(j=1,\dots,p\).
Next, by (\ref{eqn:implicit-z0ab}),
if \(j\ge 1\) and \(\alpha ,\beta ,\gamma ,\delta \in \{0,1\}\)
such that \((\alpha ,\beta )\ne (\gamma ,\delta )\), we have
\[
\frac{\partial z_{0\alpha \beta }}{\partial z_{j\alpha \beta }}=-1,
\quad \frac{\partial z_{0\alpha \beta }}{\partial z_{j\gamma \delta }}=0, \,\,\text{ and }\,\,
\frac{\partial z_{0\alpha \beta }}{\partial z}=(-1)^{\alpha +\beta }.
\]
Hence
\begin{align}  
  \frac{\partial \varphi }{\partial z}
  &=(d-4p)(\log A-\log B) + \log\left(\frac{z_{010}z_{001}}{z_{011}z_{000}}\right),
  \label{eqn:partial-phi-partial-z} 
\end{align}
and for $(j,\alpha,\beta)\in [p]\times\{0,1\}^2$,
\begin{align}
  \frac{\partial \varphi }{\partial z_{j\alpha \beta }}
  &= j(-1)^{\alpha +\beta }(\log A - \log B)
  +\log\left(\frac{\tau _j}{\tau _0}\right)+\log(z_{0\alpha \beta })-\log(z_{j\alpha \beta }).
  \label{eqn:partial-phi-partial-zjab} 
\end{align}

\subsection{Applying Laplace's Summation Method}\label{ss:laplace}

In this section we apply the method of Laplace summation to the sum
(\ref{eqn:y2-asympt}).  We use the following
formulation~\cite{GREENHILL-JANSON-RUCINSKI-2010}.

\begin{lemma}
  \label{lemma:laplace-summation}
  Suppose the following:
  \begin{enumerate}
  \item[\emph{(i)}] \(\mathcal{L} \subset \mathbb{R} ^r\) is a lattice of full rank \(r\).
  \item[\emph{(ii)}] \(K\subset \mathbb{R} ^r\) is a compact convex set with non-empty interior.
  \item[\emph{(iii)}] \(\varphi : K\rightarrow \mathbb{R} \) is a continuous function with a unique maximum at some interior
    point \(\boldsymbol{x} _0\) of \(K\).
  \item[\emph{(iv)}] \(\varphi \) is twice continuously differentiable in a neighbourhood of \(\boldsymbol{x} _0\)
    and the Hessian \(D^2\varphi (\boldsymbol{x}_0)\) is 
    negative definite.
  \item[\emph{(v)}] \(\psi :K_1\rightarrow \mathbb{R} \) is a continuous function on some neighbourhood \(K_1\subseteq K\) of
    \(\boldsymbol{x}_0\) with \(\psi (\boldsymbol{x} _0)>0\).

  \item[\emph{(vi)}] For each positive integer \(n\) there is a vector \(\ell _n\in \mathbb{R} ^r\).

  \item[\emph{(vii)}] For each positive integer \(n\) there is a positive real number
    \(b_n\) and a function \(a_n:(\mathcal{L} +\ell _n)\cap nK\) such that, as \(n\rightarrow \infty \)
    \[a_n(\ell )=O\left(b_ne^{n\varphi (\ell /n)+o(n)}\right)\]
    uniformly for all \(\ell \in (\mathcal{L} +\ell _n)\cap nK\), and
    \[a_n(\ell )=b_n\left(\psi (\ell /n)+o(1)\right)e^{n\varphi (\ell /n)}\]
    uniformly for all \(\ell \in (\mathcal{L} +\ell _n)\cap nK_1\).
  \end{enumerate}
  Then, as \(n\rightarrow \infty \),
  \[\sum _{\boldsymbol{k} \in (\mathcal{L} +\ell _n)\cap nK}a_n(\boldsymbol{k} ) \sim  \frac{(2\pi )^{r/2}\psi (\boldsymbol{x} _0)}{\Det(\mathcal{L} )\sqrt{\Det(-H)}}\, b_n\, n^{r/2}\, e^{n\varphi (\boldsymbol{x} _0)}.\]
\end{lemma}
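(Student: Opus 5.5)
The plan is the standard Laplace (saddle-point) argument for lattice sums: split the sum into a small neighbourhood of $n\boldsymbol{x}_0$, where a Gaussian approximation is valid, and a complement, which is shown to be negligible. Write $H=D^2\varphi(\boldsymbol{x}_0)$, which is negative definite by (iv). Fix a sequence $\rho_n\to0$ with $\rho_n\sqrt n\to\infty$ but $n\rho_n^3\to0$, for instance $\rho_n=n^{-1/2}\log n$. Split $(\mathcal{L}+\ell_n)\cap nK$ into three regions:
\begin{itemize}
\item the inner region $\{\ell:|\ell/n-\boldsymbol{x}_0|\le\rho_n\}$;
\item an intermediate shell $\{\rho_n<|\ell/n-\boldsymbol{x}_0|\le\varepsilon\}$, for a small fixed $\varepsilon>0$ chosen so that the $\varepsilon$-ball lies inside $K_1$ and inside the neighbourhood where $\varphi$ is $C^2$;
\item the far region $\{|\ell/n-\boldsymbol{x}_0|>\varepsilon\}$.
\end{itemize}

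\textbf{Inner region.} Since $\boldsymbol{x}_0$ is an interior maximum, $\nabla\varphi(\boldsymbol{x}_0)=0$. Taylor's theorem with the $C^2$ hypothesis then gives
\[
n\varphi(\ell/n)=n\varphi(\boldsymbol{x}_0)+\tfrac12\, \boldsymbol{y}^{T}H\boldsymbol{y}+o(|\boldsymbol{y}|^2),\qquad \boldsymbol{y}=(\ell-n\boldsymbol{x}_0)/\sqrt n .
\]
If only continuity of $D^2\varphi$ is available, the error term is $n\rho_n^2\cdot o(1)$. In that case I would choose $\rho_n$ tending to zero slowly enough, relative to the modulus of continuity of $D^2\varphi$, that this error is $o(1)$; this is where the choice of $\rho_n$ must be made carefully. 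By continuity of $\psi$ and (vii), $a_n(\ell)=b_n(\psi(\boldsymbol{x}_0)+o(1))e^{n\varphi(\boldsymbol{x}_0)}e^{\frac12 \boldsymbol{y}^TH\boldsymbol{y}+o(1)}$ uniformly on this region. The points $\boldsymbol{y}$ range over a translate of the lattice $n^{-1/2}\mathcal{L}$, whose cells have volume $n^{-r/2}\Det(\mathcal{L})$. A Riemann-sum argument, using that the Gaussian is uniformly continuous and integrable and that the region $|\boldsymbol{y}|\le\rho_n\sqrt n$ exhausts $\mathbb{R}^r$, gives
\[
\sum e^{\frac12 \boldsymbol{y}^TH\boldsymbol{y}}\sim\frac{n^{r/2}}{\Det(\mathcal{L})}\int_{\mathbb{R}^r}e^{\frac12 \boldsymbol{y}^TH\boldsymbol{y}}\,d\boldsymbol{y}=\frac{n^{r/2}(2\pi)^{r/2}}{\Det(\mathcal{L})\sqrt{\Det(-H)}}.
\]
This produces the claimed main term.

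\textbf{Intermediate shell.} Here negative definiteness gives $\varphi(\ell/n)\le\varphi(\boldsymbol{x}_0)-c|\ell/n-\boldsymbol{x}_0|^2$ for some $c>0$, provided $\varepsilon$ is small enough. The second estimate in (vii), together with boundedness of $\psi$ on $K_1$, bounds the terms by $O(b_n)e^{n\varphi(\boldsymbol{x}_0)}e^{-c|\boldsymbol{y}|^2}$. Summing this Gaussian tail over lattice points with $|\boldsymbol{y}|>\rho_n\sqrt n\to\infty$ gives $o(n^{r/2})$ after the same Riemann comparison.

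\textbf{Far region.} Since $K$ is compact, $\varphi$ is continuous and the maximum is unique, there is $\eta>0$ with $\varphi\le\varphi(\boldsymbol{x}_0)-\eta$ outside the $\varepsilon$-ball. The first estimate in (vii) bounds each term by $b_ne^{n\varphi(\boldsymbol{x}_0)-\eta n+o(n)}$. There are only $O(n^r)$ lattice points in $nK$, so this region contributes $o(b_nn^{r/2}e^{n\varphi(\boldsymbol{x}_0)})$.

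The main obstacle is the inner region. There one has to control the Taylor remainder uniformly with merely continuous second derivatives, and justify the Riemann-sum-to-integral passage for a shifted lattice uniformly in the shift $\ell_n$. I would handle the latter by comparing each lattice point's term with the integral over its fundamental cell, using uniform continuity of the Gaussian on scales $n^{-1/2}\to0$ after rescaling.
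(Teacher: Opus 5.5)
Your argument is correct and is the standard Laplace-method proof. The paper does not prove this lemma itself but quotes it from Greenhill, Janson and Ruci\'nski, whose proof has the same structure: compactness and the crude bound in (vii) away from $\boldsymbol{x}_0$, and near $\boldsymbol{x}_0$ a Taylor expansion plus a lattice Riemann-sum comparison with the Gaussian integral.

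Two small slips need fixing:
\begin{enumerate}
\item The choice of $\rho_n$ is stated backwards. Under mere $C^2$ regularity you need $\rho_n\sqrt n\to\infty$ \emph{slowly}, i.e.\ $\rho_n\to0$ fast rather than slowly, to make $n\rho_n^2\omega(\rho_n)\to0$, where $\omega(s)=\sup_{|\boldsymbol{x}-\boldsymbol{x}_0|\le s}\|D^2\varphi(\boldsymbol{x})-H\|$. Such a choice exists, e.g.\ $n\rho_n^2=\min\{n^{1/2},(\omega(n^{-1/4})+1/n)^{-1/2}\}$.
\item In the intermediate shell you should bound $\psi$ on a closed $\varepsilon$-ball inside $K_1$. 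It need not be bounded on $K_1$ itself.
\end{enumerate}
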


To apply this lemma we need the determinant of $-H$, where $H:=D^2\varphi(\boldsymbol{z}^*)$ is the Hessian matrix of $\varphi$ evaluated at $\boldsymbol{z}^*$.
The proof of Lemma \ref{lemma:det-is-good} is presented in  Section~\ref{ss:Hessian}.

\begin{lemma}
  \label{lemma:det-is-good}
  If \(d\ge 3\) and \(1\leq p<d\) are integers, then
  \begin{equation}
    \label{eqn:det-is-good}
  \Det(-H) =  \left(2\binom{d}{p}\right)^{8(p+1)}\,
 \frac{d^2(d-1)-(d^2-4dp+4p^2-d)^2}{32 d^2(d-1)\, \prod _{j=0}^p\tau _j^4}.
  \end{equation}
\end{lemma}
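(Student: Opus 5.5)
The plan is to compute the Hessian of $\varphi$ at $\boldsymbol{z}^\ast$ explicitly from the first derivatives (\ref{eqn:partial-phi-partial-z}) and (\ref{eqn:partial-phi-partial-zjab}), and to split $-H$ into a diagonal part plus a low-rank correction, so that its determinant can be evaluated with the matrix determinant lemma. At $\boldsymbol{z}^\ast$ we have $A=B=d/4$ and $z_{j\alpha\beta}^\ast=\tau_j/(4\binom dp^2)$. Differentiating again, every second derivative is a sum of contributions of three kinds. The first comes from $\log A-\log B$, with derivative $(1/A+1/B)\,\partial A=(8/d)\,\partial A$. The second comes from $-\log z_{j\alpha\beta}$, a diagonal term $-1/z^\ast_{j\alpha\beta}$. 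The third comes from $\log z_{0\alpha\beta}$, whose derivatives are $\partial z_{0\alpha\beta}/z^\ast_{0\alpha\beta}$.

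Concretely, I introduce the vector $\mathbf{a}$ of partial derivatives of $A$: its entry for $z$ is $d-4p$ and its entry for $z_{j\alpha\beta}$ is $(-1)^{\alpha+\beta}j$. For each $(\alpha,\beta)$ I introduce $\mathbf{c}_{\alpha\beta}$, the vector of partial derivatives of $z_{0\alpha\beta}$: its entry for $z$ is $\pm1$, and it has $-1$ in the coordinates $z_{j\alpha\beta}$, $j\in[p]$. Then
\[
-H = D + \tfrac{8}{d}\,\mathbf{a}\mathbf{a}^T + \sum_{\alpha,\beta}\frac{1}{z^\ast_{0\alpha\beta}}\,\mathbf{c}_{\alpha\beta}\mathbf{c}_{\alpha\beta}^T - \text{(correction)},
\]
where $D$ is diagonal with entries $1/z^\ast_{j\alpha\beta}$ and $0$ in the $z$-coordinate. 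The sign of the $\mathbf{aa}^T$ term needs care: $A\log A+B\log B$ is convex, so it enters $\varphi$ with a positive Hessian contribution and hence $-H$ gets $-\frac8d\mathbf{aa}^T$. The $z$-coordinate has no diagonal term. The best route is therefore to treat the $z$-row separately, for instance by a Schur complement: first handle the $4p\times4p$ block in the coordinates $z_{j\alpha\beta}$, then the $1\times1$ remainder.

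The $4p\times4p$ block decomposes into four blocks indexed by $(\alpha,\beta)$, coupled only through the rank-one term from $\mathbf{a}$. Each block has the form $\mathrm{diag}(1/z^\ast_{j\alpha\beta})+\frac{1}{z^\ast_{0\alpha\beta}}\mathbf{1}\mathbf{1}^T$, which is the standard multinomial covariance-inverse form. Its determinant is $\prod_{j=0}^p (1/z^\ast_{j\alpha\beta})\cdot\sum_{j=0}^p z^\ast_{j\alpha\beta}=\tfrac14\prod_{j=0}^p 4\binom dp^2/\tau_j$. This yields the factor $\big(2\binom dp\big)^{8(p+1)}/\prod\tau_j^4$ up to powers of $2$. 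Its inverse is the multinomial covariance: with $\pi_j=\tau_j/\binom dp^2$ (the hypergeometric distribution), the inverse equals $\tfrac14(\mathrm{diag}(\pi)-\pi\pi^T)$ restricted to $j\ge1$.

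Applying the matrix determinant lemma to the rank-one $\mathbf{a}$-perturbation produces the factor $1-\frac8d\,\mathbf{a}^TM^{-1}\mathbf{a}$. Here $\mathbf{a}^TM^{-1}\mathbf{a}$ is a sum of hypergeometric variances, which Lemma~\ref{lemma:tau-sums} gives as $\mathrm{Var}=\frac{p^2(d-p)^2}{d^2(d-1)}$. The remaining Schur complement in $z$ involves the cross terms $\mathbf{a}^TM^{-1}\mathbf{c}$ and $\mathbf{c}^TM^{-1}\mathbf{c}$, which reduce to hypergeometric means $p^2/d$. Collecting these should give $1-\big(\frac{d^2-4dp+4p^2-d}{d\sqrt{d-1}}\big)^2$, matching $1-\sum$-type structure of Lemma~\ref{lemma:expsum-lemma} and the factor $\frac{d^2(d-1)-(d^2-4dp+4p^2-d)^2}{d^2(d-1)}$.

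The main obstacle is this final bookkeeping of the $2\times2$ coupled Schur complement (the $z$-coordinate together with the $\mathbf{a}$ direction). Getting the signs of $(-1)^{\alpha+\beta}$ right means the four blocks contribute with alternating signs to $\mathbf{a}$ and to the $z$-derivative. I would organise it by writing everything in terms of the random variable $J\sim$ hypergeometric$(d,p,p)$ and the signs $\alpha,\beta$, so that the quadratic forms become $\mathbf{E}$ and $\mathrm{Var}$ of $J$. I would then check the resulting constant $1/32$ against the direct small case $p=1$.
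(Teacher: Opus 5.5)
Your plan is correct, and it follows a genuinely different route from the paper.

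\textbf{The paper's route.} The paper keeps the four $z_{0\alpha\beta}$ contributions together as a single rank-four term $-\frac{1}{z^\ast_{000}}VV^T$, which works because all $z^\ast_{0\alpha\beta}$ are equal. Its diagonal part has a zero in the $z$-coordinate, and it gets around this by linearity of the determinant in the first row, $\det(H)=\det(J_1)-\det(J_0)$. It then evaluates each $\det(J_\chi)$ with a generalised matrix determinant lemma (rank-one $\boldsymbol u$ plus rank-four $V$, combined via Sherman--Morrison). This ends in a $4\times4$ determinant of the form $cI_4+c'\boldsymbol v\boldsymbol v^T$.

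\textbf{Your route.} You absorb each $z_{0\alpha\beta}$ term into its own diagonal block. The $4p\times4p$ part $M$ is then block diagonal with explicit inverses (hypergeometric covariances). You need only one rank-one update for $\mathbf a$, and you finish with a Schur complement in $z$.

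\textbf{The bookkeeping you flag does close.} Write the off-diagonal part of the $z$-column of $-H$ as $-\frac8d(d-4p)\mathbf a_0-\frac{1}{z^\ast_{000}}\mathbf w$, where $\mathbf w$ has entry $(-1)^{\alpha+\beta}$ at $z_{j\alpha\beta}$. Set $\pi_0=4z^\ast_{000}$, $\mu=\mathbf{E}J=p^2/d$ and $\sigma^2=\mathrm{Var}\,J$. Then $\mathbf w^TM^{-1}\mathbf w=\pi_0(1-\pi_0)$, $\mathbf a_0^TM^{-1}\mathbf w=\mu\pi_0$ and $\mathbf a_0^TM^{-1}\mathbf a_0=\sigma^2$. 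The $1/z^\ast_{000}$ terms cancel against the $(z,z)$ entry, leaving
\[
\det(-H)=\det(M)\Big[16-\tfrac{8}{d}\big(16\sigma^2+(d-4p+4\mu)^2\big)\Big]
=8\det(M)\Big(1-\frac{(d^2-4dp+4p^2-d)^2}{d^2(d-1)}\Big).
\]
The second equality uses $d-4p+4\mu=(d-2p)^2/d$ and $4p(d-p)=d^2-(d-2p)^2$. Since $\det(M)=\big(2\binom dp\big)^{8(p+1)}/\big(256\prod_j\tau_j^4\big)$, this is exactly the claimed formula. The Schur step is legitimate because $1-\frac8d\sigma^2\ge1-\frac{d}{2(d-1)}>0$.

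\textbf{Comparison.} Your route is shorter and explains why only $\mathbf{E}J$ and $\mathrm{Var}\,J$ enter. The paper's route is more mechanical but does not rely on the multinomial-covariance inverse identity.

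\textbf{One correction.} Your displayed decomposition has the wrong sign on $\mathbf a\mathbf a^T$ and an unspecified correction term. With your $D$ it should read exactly $-H=D-\frac8d\mathbf a\mathbf a^T+\sum_{\alpha,\beta}\frac{1}{z^\ast_{0\alpha\beta}}\mathbf c_{\alpha\beta}\mathbf c_{\alpha\beta}^T$, with nothing further.
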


We remark that Lemma~\ref{lemma:det-is-good} holds for any \(p\) and \(d\), regardless
of condition (\ref{eq:inequality}). However, if (\ref{eq:inequality}) fails
then the determinant \(\Det(-H)\) is negative.
So this is another point in the argument
where assumption (\ref{eq:inequality}) is required in order
for the SSCM to apply to $Y$. 

\bigskip

Now we are able to complete the computation of the second moment for the pairs $(d,p)$ listed in (\ref{eq:list}).  The same proof would work for any pair $(d,p)$ which satisfies the conclusion of Lemma~\ref{eqn:hard-phi-maximum-lemma}.

\begin{lemma}
  \label{lem:2nd-mom}
  If  $(d,p)$ is one of the pairs listed in \emph{\eqref{eq:list}}
  then
  \[\frac{\Exp (Y^2)}{\Exp (Y)^2}\sim d\sqrt{\frac{d-1}{d^2(d-1)-(4p^2-4dp+d^2-d)^2}}.\]
\end{lemma}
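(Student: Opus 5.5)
We apply Lemma~\ref{lemma:laplace-summation} to the sum (\ref{eqn:y2-asympt}), with $r=4p+1$ and $\boldsymbol{z}^\ast$ as the maximiser, and compare the result with the square of (\ref{eq:EY}).

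\textbf{Setting up the hypotheses.} Index the sum by the free coordinates $\boldsymbol{k}=(k,(k_{j\alpha\beta})_{j\in[p]})\in\mathbb{Z}^{4p+1}$. The coordinates $k_{0\alpha\beta}$ are then determined by (\ref{eqn:k011}) and are integers automatically. So we take $\mathcal{L}=\mathbb{Z}^{4p+1}$, with $\Det(\mathcal{L})=1$ and $\ell_n=0$, and the summation range is $\mathcal{L}\cap nK$. The hypotheses are checked as follows.
\begin{itemize}
\item Hypotheses (ii) and (iii) are the compactness and convexity of $K$ together with Lemma~\ref{eqn:hard-phi-maximum-lemma}.
\item For (iv), $\varphi$ is smooth near $\boldsymbol{z}^\ast\in K^\circ$, because all $z^\ast_{j\alpha\beta}>0$ and $A,B=d/4>0$ there. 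Since $\boldsymbol{z}^\ast$ is an interior maximum, $H$ is negative semidefinite. Lemma~\ref{lemma:det-is-good} together with (\ref{eq:inequality}) gives $\Det(-H)>0$, so $H$ is nonsingular and hence negative definite.
\item For (v), $\psi$ is continuous and positive on a small ball $K_1$ around $\boldsymbol{z}^\ast$.
\item For (vii), use Lemma~\ref{lem:J-asymptotic}. On $nK_1$ every $k_{j\alpha\beta}$ is $\Theta(n)$, so $E=1+o(1)$ uniformly, and $J_n=b_n(\psi+o(1))e^{n\varphi}$ by uniform continuity of $\psi$ on $K_1$. On all of $nK$ we have $E=O(1)$, but $\psi$ may blow up near the boundary. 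Its size is at most polynomial, since each $k_{j\alpha\beta}\ge 1$ or the factor is handled via $N\vee 1$ in Stirling, giving $\psi(\boldsymbol{k}/n)=O(n^{C})$. Hence $J_n=O(b_n e^{n\varphi+O(\log n)})$, which is the required $o(n)$ form.
\end{itemize}
The only delicate point is that Lemma~\ref{lem:J-asymptotic} is written for interior points. At boundary lattice points we redo Stirling with $N\vee1$ and the convention $0\log 0=0$; this changes only polynomial factors.

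\textbf{Computing the ratio.} Lemma~\ref{lemma:laplace-summation} gives
\[
\Exp(Y^2)\sim\frac{(2\pi)^{(4p+1)/2}\psi(\boldsymbol{z}^\ast)}{\sqrt{\Det(-H)}}\,b_n\,n^{(4p+1)/2}e^{n\varphi(\boldsymbol{z}^\ast)}.
\]
Substituting (\ref{eqn:bn-def}) and (\ref{eqn:phi(zstar)}), the $n$-dependence is
\[
d^{-dn/2}(d/4)^{dn/2}\left(2\binom dp\right)^{2n}=\left(2^{1-d/2}\binom dp\right)^{2n},
\]
which matches $\Exp(Y)^2$ from (\ref{eq:EY}). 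Collecting constants leaves
\[
\frac{\Exp(Y^2)}{\Exp(Y)^2}\sim\frac{\psi(\boldsymbol{z}^\ast)}{\sqrt2\,d\,\sqrt{\Det(-H)}}.
\]
Insert (\ref{eqn:psi-zstar}) and (\ref{eqn:det-is-good}). The factors $\left(2\binom dp\right)^{4(p+1)}/\prod_j\tau_j^2$ cancel exactly against the square root of the corresponding factor in $\Det(-H)$. What remains is
\[
\frac{d/4}{\sqrt2\, d}\cdot\sqrt{\frac{32d^2(d-1)}{d^2(d-1)-(d^2-4dp+4p^2-d)^2}}=d\sqrt{\frac{d-1}{d^2(d-1)-(d^2-4dp+4p^2-d)^2}},
\]
using $\sqrt{32}/(4\sqrt2)=1$. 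This is the claimed asymptotic.

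The substantive inputs are Lemmas~\ref{eqn:hard-phi-maximum-lemma} and~\ref{lemma:det-is-good}, which we assume. Given those, the only genuine obstacle is the uniform $O(b_ne^{n\varphi+o(n)})$ bound near $\partial K$, where some $k_{j\alpha\beta}$ are small. We handle it by the polynomial bound on $\psi$ and the $O(1)$ bound on $E$ described above.
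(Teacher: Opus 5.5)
Your proposal is correct and follows the paper's own proof: Laplace summation (Lemma~\ref{lemma:laplace-summation}) over $\mathbb{Z}^{4p+1}\cap nK$ with $\ell_n=0$, then substituting $\varphi(\boldsymbol{z}^\ast)$, $\psi(\boldsymbol{z}^\ast)$ and Lemma~\ref{lemma:det-is-good}, and your constant bookkeeping (including exponent $(4p+1)/2$ rather than the paper's typo $d/2$) checks out. You are in fact more careful than the paper in two places. First, you get negative definiteness of $H$ from $\Det(-H)>0$ plus semidefiniteness, since an interior maximum alone gives only semidefiniteness. Second, you justify the uniform $O(b_ne^{n\varphi+o(n)})$ bound at boundary lattice points, where $\psi$ is undefined, via Stirling with $N\vee 1$.
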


\begin{proof}
For Lemma \ref{lemma:laplace-summation},
take \(\varphi \), \(\psi \), \(b_n\), and \(K\) as defined in Section \ref{ss:Ey2-asympt-sum}
and \(b_n=J_n\). Further, let \(\ell _n=\boldsymbol{0}\) be the zero vector.
Also, let \(\mathcal{L} =\mathbb{Z} ^{4p+1}\) and \(r=4p+1\).
Firstly, observe that \(I=nK\cap \mathcal{L} \), thus, the sum (\ref{eqn:y2-asympt}) has the required form.
We check each of the conditions of Lemma \ref{lemma:laplace-summation} in turn:
\begin{enumerate}[(i)]
\item The lattice \(\mathcal{L} \) has full rank \(4p+1\).

\item By definition, \(K\) is convex and compact with non-empty interior (see Section~\ref{ss:Ey2-asympt-sum}).

\item \(\varphi :K\rightarrow \mathbb{R} \) is continuous,
  and Lemma \ref{eqn:hard-phi-maximum-lemma} shows that \(\varphi \) has a unique maximum at the point
  \(\boldsymbol{z} ^{\ast}\), which lies in the interior of \(K\). Take \(\boldsymbol{x} _0=\boldsymbol{z} ^{\ast}\).

\item Equation (\ref{eqn:phi-def}) shows that \(\varphi \) is clearly twice continuously differentiable on the interior
  of \(K\), in fact, it is infinitely differentiable.
  The fact that \(\boldsymbol{z} ^{\ast}\) is a global maximum of $\varphi$ on the interior of \(K\) shows that the Hessian
   \(H\) is
  negative definite.

\item Equation (\ref{eqn:psi-def}) shows that \(\psi \) is positive and continuous on the interior of \(K\). Therefore, we can take
  \(K_1\subset K\) to be any open neighbourhood of \(\boldsymbol{z} ^{\ast}\).

\item We have already seen that \(\ell _n=\boldsymbol{0}\) is an appropriate choice.

\item This follows from Lemma~\ref{lem:J-asymptotic}.
\end{enumerate}
Thus, Lemma~\ref{lemma:laplace-summation} applies, and we conclude that
\[
\Exp (Y^2)\sim \frac{(2\pi )^{d/2}\,\psi (\boldsymbol{z} ^{\ast})}{\Det(\mathcal{L} )\, \sqrt{\Det(-H)}}\,b_n\, n^{d/2}\,e^{n\varphi (\boldsymbol{z} ^{\ast})}
\sim \frac{\left(2\binom{d}{p}\right)^{4(p+1)}}{\sqrt{32\, \Det(-H)}\prod _{j=0}^p\tau _j^2}\,\, (\Exp Y)^2, \label{eq:halfway}
\]
using (\ref{eqn:bn-def}),  (\ref{eqn:phi(zstar)}), (\ref{eqn:psi-zstar}),
and the fact that \(\Det(\mathcal{L} )=1\).
Substituting the result of Lemma~\ref{lemma:det-is-good}
completes the proof.
\end{proof}

\bigskip

We complete this section by proving  Theorem~\ref{thm:small-dp}(ii).

\begin{proof}[Proof of Theorem~\ref{thm:small-dp}(ii)]\
By Lemma~\ref{lem:list}, it suffices to prove the result
for the five pairs $(d,p)$ listed in (\ref{eq:list}). 
First we prove that Theorem~\ref{thm:sscm} can be applied to the random variables $Y$ and $X_i$, namely, the number of $p$-orientations and the number of $i$-cycles in
 \(F\in \mathcal{P} _{n,d}\), respectively.  It is well-known~\cite{BOLLOBAS1980311} that
 the $X_i$ are asymptotically independent Poisson random variables
 with $\Exp(X_i)=\lambda_i$, where $\lambda_i$ is defined in 
 (\ref{eq:lambda-def}).  Lemma~\ref{lemma:joint-moments} proves
 that assumption (A2) holds, Lemma \ref{lemma:expsum-lemma}
 proves that (A3) holds and
Lemma~\ref{lem:2nd-mom} proves that (A4) holds.
Therefore, Theorem \ref{thm:sscm} applies and proves that a.a.s.\ $\Pr(Y>0)$, 
noting from Lemma~\ref{lemma:joint-moments}
that $\delta_i > -1$ for all $i\geq 1$. 

Therefore $\Pr(Y=0)=o(1)$ and applying (\ref{eqn:pndsimple}) we conclude
that $\Pr(Y_\mathcal{G}=0)=o(1)$ for the corresponding random
variable $Y_{\mathcal{G}}$ defined over $\mathcal{G}_{n,d}$.
In other words, a.a.s.\ $\mathcal{G}_{n,d}$ has a 
$p$-orientation whenever $(d,p)$ is one of the pairs from
\eqref{eq:list}.  This completes the proof of Theorem~\ref{thm:small-dp}, using Lemma~\ref{lem:list}.
 \end{proof}

\subsection{The Hessian Determinant}\label{ss:Hessian}

To prove Theorem~\ref{thm:small-dp} we only need to know the result of Lemma~\ref{lemma:det-is-good} for the pairs $(d,p)$ listed in \eqref{eq:list}. However, we prefer to prove the result in general, to assist anyone who might like to extend our argument to other pairs outside Table~\ref{t:results}.  

To prove Lemma \ref{lemma:det-is-good} we find an explicit
expression for the Hessian matrix \(H\). Then we write \(H\)
as a sum of diagonal matrices
and products of simpler matrices and apply
various linear algebraic identities to compute
\(\Det(H)\) explicitly.
This extends a result from~\cite[Section 5.1]{stars}.
We will use the following three results.

\begin{lemma}[Generalised Matrix Determinant Lemma~{\cite[Exercise~6.2.7]{meyer00}}]
  \label{lemma:mdt}
  Let \(P\) be an invertible \(n\times n\) matrix and \(Q,R\) be \(n\times m\) matrices, then,
  \[\Det(P+QR^T)=\Det(I_m+R^TP^{-1}Q) \, \Det(P).\]
\end{lemma}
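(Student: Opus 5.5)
The plan is to reduce the identity to Sylvester's determinant identity and prove that with a block-matrix factorisation. Since $P$ is invertible, factor
\[
P+QR^T = P\,\big(I_n + P^{-1}QR^T\big).
\]
Multiplicativity of the determinant then gives $\Det(P+QR^T)=\Det(P)\,\Det(I_n+AB)$, where $A:=P^{-1}Q$ is $n\times m$ and $B:=R^T$ is $m\times n$. It therefore suffices to show $\Det(I_n+AB)=\Det(I_m+BA)$, because $BA=R^TP^{-1}Q$.

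For this I will compute the determinant of the $(n+m)\times(n+m)$ block matrix
\[
M:=\begin{pmatrix} I_n & -A\\ B & I_m\end{pmatrix}
\]
in two ways. Direct block multiplication gives the two factorisations
\[
M=\begin{pmatrix} I_n & 0\\ B & I_m\end{pmatrix}\begin{pmatrix} I_n & -A\\ 0 & I_m+BA\end{pmatrix}
=\begin{pmatrix} I_n+AB & -A\\ 0 & I_m\end{pmatrix}\begin{pmatrix} I_n & 0\\ B & I_m\end{pmatrix}.
\]
For the first product, the lower-right block is $-BA+I_m+BA=I_m$. For the second, the upper-left block is $I_n+AB-AB=I_n$.

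Every factor is block triangular, and the determinant of a block triangular matrix is the product of the determinants of its diagonal blocks. The first factorisation therefore gives $\Det(M)=\Det(I_m+BA)$, and the second gives $\Det(M)=\Det(I_n+AB)$. Equating the two and multiplying by $\Det(P)$ yields
\[
\Det(P+QR^T)=\Det(I_m+R^TP^{-1}Q)\,\Det(P),
\]
as claimed.

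There is no real obstacle here. The only points needing care are the block-triangular determinant rule, which follows from Laplace expansion or from the Leibniz formula, and checking the two block products.
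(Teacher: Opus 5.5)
Your proof is correct and complete. Factoring out $P$ reduces the claim to Sylvester's identity $\Det(I_n+AB)=\Det(I_m+BA)$, and your two block-triangular factorisations of the same matrix $M$ establish that identity; both block products check out. The paper gives no proof of its own (it only cites Meyer's exercise), and your argument is the standard block-matrix (Schur complement) proof of this identity.
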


\begin{lemma}[Sherman-Morrison Formula {\cite[equation (3.8.2)]{meyer00}}]
  \label{lemma:smf}
  If \(P\) is an invertible matrix and \(\boldsymbol{u} \) and \(\boldsymbol{v} \) are vectors, then
  \[(P+\boldsymbol{u} \boldsymbol{v} ^T)^{-1}=P^{-1}-\frac{P^{-1}\boldsymbol{u} \boldsymbol{v} ^TP^{-1}}{1+\boldsymbol{v} ^TP^{-1}\boldsymbol{u} }\]
  provided that \(1+\boldsymbol{v} ^TP^{-1}\boldsymbol{u} \ne 0\).
\end{lemma}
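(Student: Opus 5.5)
The plan is to verify the formula directly: multiply $P+\boldsymbol{u}\boldsymbol{v}^T$ by the proposed inverse and check that the product is the identity. Write $c:=1+\boldsymbol{v}^TP^{-1}\boldsymbol{u}$, a nonzero scalar by hypothesis, and set
\[
X:=P^{-1}-\frac{1}{c}\,P^{-1}\boldsymbol{u}\boldsymbol{v}^TP^{-1}.
\]

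The first step is to check that $P+\boldsymbol{u}\boldsymbol{v}^T$ is invertible, so that the left-hand side of the formula makes sense. Apply Lemma~\ref{lemma:mdt} with $m=1$, $Q=\boldsymbol{u}$ and $R=\boldsymbol{v}$. This gives $\Det(P+\boldsymbol{u}\boldsymbol{v}^T)=c\,\Det(P)$, which is nonzero.

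The second step is to expand $(P+\boldsymbol{u}\boldsymbol{v}^T)X$:
\[
(P+\boldsymbol{u}\boldsymbol{v}^T)X = I+\boldsymbol{u}\boldsymbol{v}^TP^{-1}-\frac{1}{c}\big(\boldsymbol{u}+\boldsymbol{u}\,(\boldsymbol{v}^TP^{-1}\boldsymbol{u})\big)\boldsymbol{v}^TP^{-1}.
\]
The only point needing care is that $\boldsymbol{v}^TP^{-1}\boldsymbol{u}$ is a scalar, so it commutes past $\boldsymbol{u}$. The bracket is then exactly $c\,\boldsymbol{u}$. So the last term equals $\boldsymbol{u}\boldsymbol{v}^TP^{-1}$, it cancels the middle term, and the product reduces to $I$.

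Since all the matrices are square, a one-sided inverse is automatically two-sided. Hence $X=(P+\boldsymbol{u}\boldsymbol{v}^T)^{-1}$, as claimed.

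None of these steps is a real obstacle; the scalar regrouping in the second step is the only step needing any thought. If one prefers not to invoke Lemma~\ref{lemma:mdt} for invertibility, one can instead compute $X(P+\boldsymbol{u}\boldsymbol{v}^T)$ in the same way and check that it also equals $I$.
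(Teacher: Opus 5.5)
Your verification is correct: with $c=1+\boldsymbol{v}^TP^{-1}\boldsymbol{u}\neq 0$, the product $(P+\boldsymbol{u}\boldsymbol{v}^T)X$ reduces to $I$ exactly as you compute. For square matrices this one-sided identity already shows that $P+\boldsymbol{u}\boldsymbol{v}^T$ is invertible with inverse $X$.

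The paper gives no proof of its own here; it simply cites Meyer's equation (3.8.2), and your direct multiplication is the standard argument behind that citation. Your first step via Lemma~\ref{lemma:mdt} is therefore redundant, though it is legitimate and not circular.
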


The following lemma generalises Delcourt et al.~\cite[Lemma 5.4]{stars},
which corresponds to taking \(\lambda =-\mu =1\) and letting \(V\) be a vector.

\begin{lemma}
  \label{lemma:gross-det-formula}
  Suppose that \(H\) is an invertible real matrix of the form
  \[H=D+\lambda \boldsymbol{u} \boldsymbol{u} ^T+\mu VV^T\]
  for some scalars \(\lambda ,\mu \in \mathbb{R} \), an invertible matrix \(D\), a vector \(\boldsymbol{u} \),
  and a real matrix \(V\) (which need not be square). 
If \(1+\lambda \boldsymbol{u} ^TD^{-1}\boldsymbol{u} \ne 0\) then
  \begin{align*}
    \Det(H)=\left(1+\lambda \boldsymbol{u} ^TD^{-1}\boldsymbol{u} \right)\Det(D)
    \Det\left(
    I+\mu V^TD^{-1}V-\frac{\lambda \mu V^TD^{-1}\boldsymbol{u} \boldsymbol{u} ^TD^{-1}V}{1+\lambda \boldsymbol{u} ^TD^{-1}\boldsymbol{u} }
    \right).
  \end{align*}
\end{lemma}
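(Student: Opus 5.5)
The plan is to reduce to the rank-one case twice, using the Generalised Matrix Determinant Lemma (Lemma~\ref{lemma:mdt}) and the Sherman--Morrison Formula (Lemma~\ref{lemma:smf}). Write $P := D+\lambda\boldsymbol{u}\boldsymbol{u}^T$, so that $H = P + \mu VV^T$.

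First compute $\Det(P)$ and $P^{-1}$.

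\emph{Determinant of $P$.} Apply Lemma~\ref{lemma:mdt} with $Q=\lambda\boldsymbol{u}$, $R=\boldsymbol{u}$ and $m=1$. This gives $\Det(P) = (1+\lambda\boldsymbol{u}^TD^{-1}\boldsymbol{u})\Det(D)$, which is nonzero by the hypothesis $1+\lambda \boldsymbol{u}^TD^{-1}\boldsymbol{u}\neq 0$ together with the invertibility of $D$. So $P$ is invertible.

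\emph{Inverse of $P$.} Apply Lemma~\ref{lemma:smf} with the vectors $\lambda\boldsymbol{u}$ and $\boldsymbol{u}$. The same hypothesis is exactly the nonvanishing condition needed there, and we obtain
\[
P^{-1} = D^{-1} - \frac{\lambda D^{-1}\boldsymbol{u}\boldsymbol{u}^TD^{-1}}{1+\lambda\boldsymbol{u}^TD^{-1}\boldsymbol{u}}.
\]

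Next apply Lemma~\ref{lemma:mdt} a second time, now to $H = P + (\mu V)V^T$, taking $Q=\mu V$, $R=V$, and $m$ equal to the number of columns of $V$. This gives
\[
\Det(H) = \Det\bigl(I_m + \mu V^TP^{-1}V\bigr)\Det(P).
\]
Substituting the expression for $P^{-1}$ gives
\[
\mu V^TP^{-1}V = \mu V^TD^{-1}V - \frac{\lambda\mu V^TD^{-1}\boldsymbol{u}\boldsymbol{u}^TD^{-1}V}{1+\lambda\boldsymbol{u}^TD^{-1}\boldsymbol{u}}.
\]
Combining this with the formula for $\Det(P)$ yields exactly the claimed identity.

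There is no real obstacle here. The only point needing care is that the hypothesis $1+\lambda\boldsymbol{u}^TD^{-1}\boldsymbol{u}\neq0$ does two jobs: it guarantees that $P$ is invertible, which Lemma~\ref{lemma:mdt} requires in the second application, and it is the condition under which Sherman--Morrison applies. The invertibility of $H$ itself is not needed for the identity.
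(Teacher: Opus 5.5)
Your proposal is correct and takes essentially the same route as the paper. Both use two applications of the generalised matrix determinant lemma, one to split off $\mu VV^T$ and one to evaluate $\Det(D+\lambda\boldsymbol{u}\boldsymbol{u}^T)$, together with Sherman--Morrison for $(D+\lambda\boldsymbol{u}\boldsymbol{u}^T)^{-1}$. Your explicit remark that the hypothesis $1+\lambda\boldsymbol{u}^TD^{-1}\boldsymbol{u}\neq 0$ makes $D+\lambda\boldsymbol{u}\boldsymbol{u}^T$ invertible, which the second application requires, is a detail the paper leaves implicit.
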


\begin{proof}
  Applying Lemma \ref{lemma:mdt}) twice, we obtain
  \begin{align}
    \Det(H)&= \Det(D+\lambda \boldsymbol{u} \boldsymbol{u} ^T)\,\Det\bigg(I+\mu V^T\left(D+\lambda \boldsymbol{u} \boldsymbol{u} ^T\right)^{-1}V\bigg) \notag\\
&= (1+\lambda \boldsymbol{u} ^TD^{-1}\boldsymbol{u} )\, \Det(D)\,
\Det\bigg(I+\mu V^T\left(D+\lambda \boldsymbol{u} \boldsymbol{u} ^T\right)^{-1}V\bigg).
        \label{eqn:det-lemma-easy}
  \end{align}
  Next, applying the Sherman-Morrison formula (Lemma \ref{lemma:smf}), we obtain
  \[(D+\lambda \boldsymbol{u} \boldsymbol{u} ^T)^{-1}=D^{-1}-\frac{\lambda D^{-1}\boldsymbol{u} \boldsymbol{u} ^TD^{-1}}{1+\lambda \boldsymbol{u} ^TD^{-1}\boldsymbol{u} },\]
  using the assumption that \(1+\lambda \boldsymbol{u} ^TD^{-1}\boldsymbol{u} \ne 0\).
 The proof is completed by substituting this into (\ref{eqn:det-lemma-easy}) and rearranging.
\end{proof}

Next, we find an expression the Hessian matrix.
We label the rows and columns of our matrices by the variables in $\boldsymbol{z}$, with the first
row and column corresponding to $z$ and all other rows and columns ordered in lexicographical order. That is, rows 2--5 correspond to $z_{100}$, $z_{101}$, $z_{110}$, $z_{111}$ in this order, the next four rows to $z_{200},z_{201}, z_{210}, z_{211}$ in this order, and so on; and similarly for columns.

\begin{lemma}
Suppose that $(d,p)$ is one of the pairs from \eqref{eq:list}).
Let $E\in M_{4p+1}(\mathbb{R})$ be the $(4p+1)\times (4p+1)$ matrix
with a 1 in the (1,1) entry and zeros everywhere else. Let $\boldsymbol{u} = \big(u, (u_{j00},u_{j01},u_{j10},u_{j11})_{j\in [p]}\big)\in\mathbb{R}^{4p+1}$
be defined by
\[ u\coloneq d-4p \quad\text{and}\quad u_{j\alpha \beta }\coloneq  (-1)^{\alpha +\beta }j.\]
Finally, let $V\in M_{4p+1,4}(\mathbb{R})$ be the $(4p+1)\times 4$ matrix defined by
\[V\coloneq 
\begin{pmatrix}
  \boldsymbol{v}  & I_4 & I_4 & \cdots & I_4
\end{pmatrix}^T,
\]
where the identity matrix appears \(p\) times and \(\boldsymbol{v} :=(-1,1,1,-1)^T\).
Then the Hessian matrix of $\varphi$ at the point $\boldsymbol{z}$ is given by
\begin{equation}
  \label{eqn:hessian-decomposition}
  H=D-E +\frac{8}{d}\boldsymbol{u} \boldsymbol{u} ^T - \frac{1}{z_{000}^{\ast}}VV^T\ .
\end{equation}
\end{lemma}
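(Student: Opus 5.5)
The plan is a direct computation: differentiate the first-order partial derivatives (\ref{eqn:partial-phi-partial-z}) and (\ref{eqn:partial-phi-partial-zjab}) once more, evaluate at $\boldsymbol{z}^{\ast}$, and match the result entry by entry with the four terms in (\ref{eqn:hessian-decomposition}). I read the statement as concerning $\boldsymbol{z}=\boldsymbol{z}^{\ast}$. I take $D$ to be the diagonal matrix with entry $-1/z^{\ast}_{j\alpha\beta}$ in the row indexed by $z_{j\alpha\beta}$ for $j\in[p]$. Its $(1,1)$ entry is chosen so that $D-E$ vanishes there, namely $D_{11}=1$, which keeps $D$ invertible for Lemma~\ref{lemma:gross-det-formula}. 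Two facts from earlier in the paper are used throughout. First, $A(\boldsymbol{z}^{\ast})=B(\boldsymbol{z}^{\ast})=d/4$, by Lemma~\ref{lemma:tau-sums}. Second, $z^{\ast}_{0\alpha\beta}=\tau_0/(4\binom dp^2)=z^{\ast}_{000}$ for all four pairs $(\alpha,\beta)$, by (\ref{eqn:equation-summat}) and the remark after it.

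\emph{Step 1: the $A,B$ terms.} Both first derivatives contain $u_x(\log A-\log B)$, where $u_x$ is the coordinate of $\boldsymbol{u}$ for the variable $x$, with $u_x=d-4p$ for $x=z$ and $u_x=(-1)^{\alpha+\beta}j$ for $x=z_{j\alpha\beta}$. Since $\partial A/\partial y=-\partial B/\partial y=u_y$, differentiating with respect to a second variable $y$ gives $u_xu_y(1/A+1/B)$. At $\boldsymbol{z}^{\ast}$ this equals $\frac{8}{d}u_xu_y$, which is exactly the $\frac8d\boldsymbol{u}\boldsymbol{u}^T$ term.

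\emph{Step 2: the logarithmic terms in $z_{0\alpha\beta}$.} I use $\partial z_{0\alpha\beta}/\partial z=(-1)^{\alpha+\beta}$, $\partial z_{0\alpha\beta}/\partial z_{j\alpha\beta}=-1$, and $\partial z_{0\alpha\beta}/\partial z_{j\gamma\delta}=0$ for $(\gamma,\delta)\neq(\alpha,\beta)$. The $(z,z)$ entry is $-\sum_{\alpha,\beta}1/z_{0\alpha\beta}=-4/z^{\ast}_{000}$. The $(z,z_{j\alpha\beta})$ entry is $(-1)^{\alpha+\beta}/z^{\ast}_{0\alpha\beta}$. The $(z_{j\alpha\beta},z_{j'\gamma\delta})$ entry is $-\mathbb{1}[(\alpha,\beta)=(\gamma,\delta)]/z^{\ast}_{0\alpha\beta}$. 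Next compute $VV^T$. Its first row is $\boldsymbol{v}^T$ placed against each $I_4$ block, and $\boldsymbol{v}^T\boldsymbol{v}=4$. With the ordering $00,01,10,11$ we have $v_{\alpha\beta}=-(-1)^{\alpha+\beta}$. The $(j\alpha\beta),(j'\gamma\delta)$ block of $VV^T$ is $I_4$ for every pair $j,j'$. Because all the $z^{\ast}_{0\alpha\beta}$ are equal, these three entries are precisely $-\frac{1}{z^{\ast}_{000}}VV^T$.

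\emph{Step 3: the remaining terms.} In (\ref{eqn:partial-phi-partial-zjab}) the term $-\log z_{j\alpha\beta}$ contributes $-1/z^{\ast}_{j\alpha\beta}$ on the diagonal, and this is $D$ away from the $(1,1)$ entry. The constants $\log(\tau_j/\tau_0)$ contribute nothing. Since $\varphi_{zz}$ has no further term, the $(1,1)$ entry is accounted for by $D_{11}-E_{11}=0$.

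Nothing here is deep. The step most likely to go wrong is the sign and index bookkeeping in Step 2: checking that the sign vector $\boldsymbol{v}=(-1,1,1,-1)^T$ reproduces $(-1)^{\alpha+\beta}$ after multiplication by $-1/z^{\ast}_{000}$, and that the $(1,1)$ entry is split correctly between $D-E$ and $VV^T$. I would verify Step 2 entry by entry for $p=1$ before asserting it in general. The restriction to the pairs in \eqref{eq:list} plays no role in the computation, which is valid for all $d,p$.
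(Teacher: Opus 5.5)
Your proposal is correct and follows essentially the same route as the paper. You differentiate (\ref{eqn:partial-phi-partial-z}) and (\ref{eqn:partial-phi-partial-zjab}) a second time, substitute $A=B=d/4$ and $z^\ast_{0\alpha\beta}=z^\ast_{000}$, and match the resulting entries against $\frac8d\boldsymbol{u}\boldsymbol{u}^T$ and the block structure of $VV^T$. Your reading of $D$ as the diagonal matrix with $(1,1)$ entry $1$ and entries $-1/z^\ast_{j\alpha\beta}$ elsewhere, which the paper leaves implicit, is the one consistent with how $D$ is used later in the determinant computation.
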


\begin{proof}

Differentiating equations (\ref{eqn:partial-phi-partial-z}) and (\ref{eqn:partial-phi-partial-zjab}), we obtain
the following:
\begin{align*}
  \frac{\partial ^2\varphi }{\partial z^2}
  &=(d-4p)^2\left(\frac1A+\frac1B\right)-\left(\frac{1}{z_{000}}+\frac{1}{z_{001}}+\frac{1}{z_{010}}+\frac{1}{z_{011}}\right), \\
  \frac{\partial ^2\varphi }{\partial z\partial z_{j\alpha \beta }}
  &=(-1)^{\alpha +\beta }\left(\frac{j(d-4p)}A+\frac{j(d-4p)}B+\frac{1}{z_{0\alpha \beta }}\right), \\
   \frac{\partial ^2\varphi }{\partial z_{j\alpha \beta }\partial z_{k\gamma \delta }}
  &= 
  jk(-1)^{\alpha +\beta +\gamma +\delta }\left(\frac1A+\frac1B\right) -
  \frac{\mathbb{1}\big( (\alpha,\beta)=(\gamma,\delta)\big)}{z_{0\alpha\beta}}
  -  \frac{\mathbb{1}\big( (j,\alpha,\beta)=(k,\gamma,\delta)\big)}{z_{j\alpha\beta}}
\end{align*}
for all $(j,\alpha,\beta),(k,\gamma,\delta)\in [p]\times\{0,1\}^2$.
Here $\mathbb{1}(\mathcal{A})$ is an indicator function which equals 1 if the event $\mathcal{A}$ is true, and 0 otherwise.
Substituting \(\boldsymbol{z} =\boldsymbol{z} ^{\ast}\) using (\ref{eqn:equation-summat}),
recalling that $A=B=d/4$ at $\boldsymbol{z}^\ast$, 
we find that the Hessian at $\boldsymbol{z}^*$ has entries
\begin{align*}
  \frac{\partial ^2\varphi }{\partial z^2}
  &= \frac8d(d-4p)^2-\frac{4}{z_{000}^{\ast}}, \\
  \frac{\partial ^2\varphi }{\partial z\partial z_{j\alpha \beta }}
  &= \frac8d(-1)^{\alpha +\beta }j(d-4p)+\frac{(-1)^{\alpha +\beta }}{z_{000}^{\ast}}, \\
\frac{\partial ^2\varphi }{\partial z_{j\alpha \beta }\partial z_{k\gamma \delta }} &=
\frac8d (-1)^{\alpha +\beta +\gamma +\delta }jk
 -
  \frac{\mathbb{1}\big( (\alpha,\beta)=(\gamma,\delta)\big)}{z_{000}^\ast}
  -  \frac{\mathbb{1}\big( (j,\alpha,\beta)=(k,\gamma,\delta)\big)}{z_{j00}^\ast}.
\end{align*}
Observe that the product of \(V\) and \(V^T\) is the
\((4p+1)\times (4p+1)\) matrix of the form
\[VV^T=
\begin{pmatrix}
  4 & \boldsymbol{v} ^T & \boldsymbol{v} ^T & \cdots & \boldsymbol{v} ^T \\ 
  \boldsymbol{v}  & I_4 & I_4 & \cdots & I_4 \\
  \vdots & \vdots & \vdots & \ddots & \vdots \\
  \boldsymbol{v}  & I_4 & I_4 & \cdots & I_4
\end{pmatrix}.
\]
Note that the entry in the row indexed by $(j,\alpha,\beta)$ and the column indexed by $(k,\gamma,\delta)$
is given by $\mathbb{1}\big((\alpha,\beta)=(\gamma,\delta)\big)$.
The result follows.
\end{proof}

We cannot apply Lemma~\ref{lemma:gross-det-formula} directly to equation (\ref{eqn:hessian-decomposition})
since the matrix \(D-E\) has a zero in the $(1,1)$ entry and hence is not invertible.
Instead, since the determinant is a linear function of the first row of its input, we have  
\begin{equation}
    \Det(H) = \Det(J_1) - \Det(J_0)
    \label{eqn:det-and-minor-det}
\end{equation}
where
\[ J_1:= D+\frac8d\boldsymbol{u} \boldsymbol{u} ^T-\frac{1}{z_{000}^{\ast}}VV^T
\]
and  $J_0$ is the $(1,1)$-minor of $J_1$, obtained from $J_1$ by deleting the first row
and first column.  
Similarly, we make the following definitions:
\begin{itemize}
\item 
Let $D_1:= D$ and let $D_0$ be the $(1,1)$-minor of $D_1$. Hence
$D_0\in M_{4p}(\mathbb{R})$ is the diagonal matrix with
diagonal elements \((d_{j00},d_{j01},d_{j10},d_{j11})_{j\in [p]}\).  Furthermore, $\Det(D_\chi) = \Det(D)$
for $\chi\in\{0,1\}$, since the $(1,1)$ entry of $D$ is 1.
\item Let $V_1:= V$ and define $V_0$ by deleting the first row of $V_1$. Then
\[V_0 = 
\begin{pmatrix}I_4&I_4&\dots&I_4\end{pmatrix}^T
\]
and \(V_0V_0^T\) is the $(1,1)$-minor of $V_1V_1^T$. 
\item
Finally, let \(\boldsymbol{u}_1:=\boldsymbol{u}\) and let $\boldsymbol{u}_0$ be obtained
from $\boldsymbol{u}_1$ by deleting the first entry.  That is,  
\(\boldsymbol{u}_0 =(u_{j00},u_{j01},u_{j10},u_{j11})_{j\in [p]}\). Then $\boldsymbol{u}_0\boldsymbol{u}_0^T$ is the $(1,1)$-minor of $\boldsymbol{u}_1\boldsymbol{u}_1^T$.
\end{itemize}
These definitions allow us to write $J_1$ and $J_0$ in a common form: for $\chi\in\{0,1\}$ we have
\[ J_\chi =D_\chi +\frac8d\boldsymbol{u}_\chi\boldsymbol{u}_\chi^T-\frac{1}{z_{000}^{\ast}}V_\chi V_\chi^T.\]
Next we apply Lemma \ref{lemma:gross-det-formula} to calculate the determinants of these two matrices.

\begin{lemma}
\label{lemma:H11-det}
For $\chi\in\{0,1\}$ we have
  \begin{align*}
    \Det(J_\chi)
     &=   \frac{\Det(D)}{ \big(4z_{000}^{\ast}\big)^4}\,
    \left( (1-16\chi)\left(1+\dfrac{8}{d}\, \boldsymbol{u}_{\chi}^T D_{\chi}^{-1}\boldsymbol{u}_{\chi} \right) + \frac{128}{d}\Big( (d-4p)\chi + p^2/(4d)\Big)^2\right).
  \end{align*}
\end{lemma}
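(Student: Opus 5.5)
The plan is to apply Lemma~\ref{lemma:gross-det-formula} to $J_\chi$, taking $D=D_\chi$, $\lambda=8/d$, $\boldsymbol{u}=\boldsymbol{u}_\chi$, $\mu=-1/z_{000}^\ast$ and $V=V_\chi$. This expresses $\Det(J_\chi)$ as
\[
\Big(1+\tfrac8d\,\boldsymbol{u}_\chi^TD_\chi^{-1}\boldsymbol{u}_\chi\Big)\Det(D)\,\Det(M_\chi),
\]
where $M_\chi$ is the $4\times 4$ matrix
\[
M_\chi = I_4+\mu V_\chi^TD_\chi^{-1}V_\chi-\frac{\lambda\mu\, (V_\chi^TD_\chi^{-1}\boldsymbol{u}_\chi)(V_\chi^TD_\chi^{-1}\boldsymbol{u}_\chi)^T}{1+\lambda\boldsymbol{u}_\chi^TD_\chi^{-1}\boldsymbol{u}_\chi}.
\]
I read $D$ off the Hessian computation: it is diagonal, with entry $1$ in position $(1,1)$ (this is what makes $\Det(D_\chi)=\Det(D)$), and with entry $-1/z^\ast_{j\alpha\beta}=-4\binom dp^2/\tau_j$ in the position indexed by $(j,\alpha,\beta)$. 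The whole task is then to compute the two small quantities
\[
V_\chi^TD_\chi^{-1}V_\chi \qquad\text{and}\qquad \boldsymbol{w}_\chi:=V_\chi^TD_\chi^{-1}\boldsymbol{u}_\chi\in\mathbb{R}^4,
\]
and finally a $4\times4$ determinant of the form $\Det(cI_4+\boldsymbol{a}\boldsymbol{a}^T)$.

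\textbf{Step 1: the matrix $V_\chi^TD_\chi^{-1}V_\chi$.} The blocks $I_4$ contribute
\[
\sum_{j\in[p]}\Big(-\frac{\tau_j}{4\binom dp^2}\Big)I_4=-\big(\tfrac14-z_{000}^\ast\big)I_4,
\]
using Lemma~\ref{lemma:tau-sums} and $z^\ast_{000}=\tau_0/(4\binom dp^2)$. When $\chi=1$ there is an extra contribution $\boldsymbol{v}\boldsymbol{v}^T$ from the first row, since $D_{11}=1$. Hence
\[
I+\mu V_\chi^TD_\chi^{-1}V_\chi=\frac{1}{4z^\ast_{000}}\,I_4-\frac{\chi}{z^\ast_{000}}\,\boldsymbol{v}\boldsymbol{v}^T.
\]
The prefactor $1/(4z^\ast_{000})$ accounts for the factor $(4z^\ast_{000})^{-4}$ in the statement.

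\textbf{Step 2: the vector $\boldsymbol{w}_\chi$.} Its $(\alpha,\beta)$ coordinate is
\[
\chi(d-4p)v_{\alpha\beta}-\sum_j(-1)^{\alpha+\beta}j\,\frac{\tau_j}{4\binom dp^2}=(-1)^{\alpha+\beta+1}\Big(\chi(d-4p)+\frac{p^2}{4d}\Big),
\]
using $\sum_j j\tau_j=\frac{p^2}{d}\binom dp^2$ and the fact that $\boldsymbol{v}=(-(-1)^{0},\dots)$, i.e.\ $v_{\alpha\beta}=-(-1)^{\alpha+\beta}$. So $\boldsymbol{w}_\chi=c_\chi\boldsymbol{v}$ with $c_\chi=\chi(d-4p)+p^2/(4d)$, which is exactly the quantity squared in the statement.

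\textbf{Step 3: assemble.} Write $S=1+\frac8d\boldsymbol{u}_\chi^TD_\chi^{-1}\boldsymbol{u}_\chi$. Then
\[
M_\chi=\frac{1}{4z^\ast_{000}}\Big(I_4+4\big(-\chi+\tfrac{8c_\chi^2}{dS}\big)\boldsymbol{v}\boldsymbol{v}^T\Big),
\]
where the sign of the second term comes from $-\lambda\mu=+\frac{8}{dz^\ast_{000}}$. Since $\|\boldsymbol{v}\|^2=4$, the matrix determinant lemma gives
\[
\Det(M_\chi)=(4z^\ast_{000})^{-4}\Big(1-16\chi+\frac{128c_\chi^2}{dS}\Big).
\]
Multiplying by $S\Det(D)$ yields the claimed formula. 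The hypothesis $S\neq0$ of Lemma~\ref{lemma:gross-det-formula} is needed here; since the final expression is polynomial in the entries, continuity (or a direct check) covers any degenerate case.

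\textbf{Main obstacle.} The hard part is bookkeeping of signs and normalisations: confirming the diagonal entries of $D$ (in particular $D_{11}=1$, which is what produces the $-E$ term), checking that the $\boldsymbol{v}$-sign convention makes $\boldsymbol{w}_\chi$ exactly parallel to $\boldsymbol{v}$, and keeping track of the factor $16=4\|\boldsymbol{v}\|^2$. I would verify each of these against the Hessian entries computed above, for example by checking the $(1,1)$ entry and one $(j,\alpha,\beta)$ entry of the decomposition \eqref{eqn:hessian-decomposition}.
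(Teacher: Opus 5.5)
Your proposal is correct and follows the paper's proof essentially step for step. Like the paper, you apply Lemma~\ref{lemma:gross-det-formula} with $\lambda=8/d$ and $\mu=-1/z_{000}^\ast$, compute $V_\chi^TD_\chi^{-1}V_\chi=\chi\boldsymbol{v}\boldsymbol{v}^T-(\tfrac14-z_{000}^\ast)I_4$ and $V_\chi^TD_\chi^{-1}\boldsymbol{u}_\chi=\big(\chi(d-4p)+p^2/(4d)\big)\boldsymbol{v}$, and finish with the matrix determinant lemma using $\boldsymbol{v}^T\boldsymbol{v}=4$. Your attention to the hypothesis $1+\frac8d\boldsymbol{u}_\chi^TD_\chi^{-1}\boldsymbol{u}_\chi\neq0$ is a small extra care that the paper omits. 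Also, your coefficient of $\boldsymbol{v}\boldsymbol{v}^T$ in $M_\chi$ is the correct version of the paper's displayed $P_\chi$, where the square is misplaced.
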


\begin{proof}
  Recalling that $\Det(D_\chi) = \Det(D)$ for $\chi\in\{0,1\}$, applying Lemma \ref{lemma:gross-det-formula} to $J_\chi$ gives
  \begin{align}
    \Det(J_{\chi})
    &=\left(1+\frac8d\boldsymbol{u}_\chi^T D_\chi^{-1} \boldsymbol{u}_\chi\right)\, \Det(D)\,
    \Det(P_{\chi})
    \label{eq:detJ-intermediate}
    \end{align}
    where $P_\chi$ is the $4\times 4$ matrix defined by
    \[ P_\chi:= 
    I_4-\frac{V_\chi^T D_\chi^{-1} V_\chi}{z_{000}^{\ast}}+\frac{8V_\chi^T D_\chi^{-1}\boldsymbol{u} _\chi\boldsymbol{u} _\chi^T D_\chi^{-1} V_\chi}
      {z_{000}^{\ast}(d+8\boldsymbol{u}_\chi^T D_\chi^{-1} \boldsymbol{u}_\chi)}.
\]
Observe that
  \[
  V_\chi^T D_\chi^{-1} V_\chi = \chi \boldsymbol{v}\boldsymbol{v}^T -\left(\sum _{j\in [p]} z_{j00}^{\ast}\right)I_4
   = \chi \boldsymbol{v}\boldsymbol{v}^T -\Big(\dfrac{1}{4} - z_{000}^{\ast}\Big)I_4,
  \]
  using (\ref{eqn:equation-summat}) and Lemma~\ref{lemma:tau-sums}.
  Next, we compute that
  \[ 
  V_\chi^T D_\chi^{-1} \boldsymbol{u} _\chi\boldsymbol{u}_\chi^T D_\chi^{-1} V_\chi =
  \left( \chi(d-4p) + \sum _{j\in [p]} jz_{j00}^{\ast}\right)^2\boldsymbol{v} \boldsymbol{v} ^T
  =  \left( \chi(d-4p) + \frac{p^2}{4d}\right)^2\boldsymbol{v} \boldsymbol{v} ^T
  \]
  again using (\ref{eqn:equation-summat}) and Lemma~\ref{lemma:tau-sums}.
  Consequently, 
\[ P_\chi = \frac{1}{4z_{000}^\ast}\, I_4 + \left( -\frac{\chi}{z_{000}^\ast} +
\frac{8\big(\chi(d-4p) + p^2/(4d)\big)}{z_{000}^\ast\, \big(d + 8\boldsymbol{u}_{\chi}^T D_{\chi}^{-1} \boldsymbol{u}_\chi\big)}
\right)^2\, \boldsymbol{v}\boldsymbol{v}^T
\]   
  and we may apply Lemma~\ref{lemma:mdt}
  one more time to obtain
  \[
  \Det(P_\chi)=\left(1 - 16\chi +\frac{128\big( (d-4p)\chi + p^2/(4d)\big)^2}{d+8\boldsymbol{u}_\chi^T D_\chi^{-1} \boldsymbol{u}_\chi }\right)\,
  \frac{1}{\big(4z_{000}^{\ast}\big)^4}\,
  \]
  using the fact that $\boldsymbol{v}^T\boldsymbol{v}=4$.
Substituting this into (\ref{eq:detJ-intermediate}) completes the proof. 
\end{proof}

To complete the evaluation of $\Det(-H)$,  observe that
\begin{align*}
  \boldsymbol{u}_{1}^T D_{1}^{-1}\boldsymbol{u}_{1} &= 
  (d-4p)^2 +  \boldsymbol{u}_{0}^T D_{0}^{-1}\boldsymbol{u}_{0},\\
   \boldsymbol{u}_{0}^T D_{0}^{-1}\boldsymbol{u}_{0} &= -4\sum_{j\in [p]} j^2 z_{j00}^\ast =
  - \frac{p^2(p^2-2p+d)}{d(d-1)}
\end{align*}
using Lemma~\ref{lemma:tau-sums} and (\ref{eqn:equation-summat}).
Therefore, by Lemma~\ref{lemma:H11-det},
\begin{align}
& \frac{(4z_{000}^\ast)^4}{\Det(D)}\, \Det(H) \\&= \frac{(4z_{000}^\ast)^4}{\Det(D)}\,\big(\Det(J_1) - \Det(J_0)\big) \notag \\
  &= -15\Big(1 + \frac{8}{d} \boldsymbol{u}_1^T D_1^{-1} \boldsymbol{u}_1\Big) + \frac{128}{d}\left( d-4p + \frac{p^2}{4d}\right)^2 - \Big(1 + \frac{8}{d} \boldsymbol{u}_0^T D_0^{-1} \boldsymbol{u}_0\Big) - \frac{128}{d}\left(\frac{p^2}{4d}\right)^2 \notag \\
  &=  -16 - \frac{120(d-4p)^2}{d} - \frac{128}{d}  \boldsymbol{u}_{0}^T D_{0}^{-1}\boldsymbol{u}_{0}
    + \frac{128}{d}\left( (d-4p)^2 + \frac{p^2(d-4p)}{2d} \right) \notag \\
    &= - \frac{8\big( d^2(d-1) - (d^2-4dp+4p^2-d)^2\big)}{d^2(d-1)}.\label{eq:scaled-detH}
\end{align}
Finally, observe that
\begin{equation}
    \label{eq:detD-factor}
\frac{\Det(D)}{(4z_{000}^\ast)^4} = \left( 4\prod_{j=0}^p z_{j00}^\ast\right)^{-4} = \frac{\left( 2\binom{d}{p}\right)^{8(p+1)}}{256\prod_{j=0}^p \tau_j^4}
\end{equation} 
by definition of $D$ and using (\ref{eqn:equation-summat}).
Since $\Det(-H) = (-1)^{4p+1}\, \Det(H) = -\Det(H)$, the proof of Lemma~\ref{lemma:det-is-good} 
follows from combining (\ref{eq:scaled-detH}) and (\ref{eq:detD-factor}).



\appendix

\section{Second moment calculations}\label{s:hard}

The goal of this appendix is to prove Lemma \ref{eqn:hard-phi-maximum-lemma}. 
Since \(\varphi \) is differentiable on the interior of \(K\), it is enough to show that
\begin{enumerate}[(I)]
\item The only critical point of \(\varphi \) on the interior of \(K\) is \(\boldsymbol{z} ^{\ast}\); and
\item For every boundary point \(\boldsymbol{z} \) of \(K\) we have \(\varphi (\boldsymbol{z} )<\varphi (\boldsymbol{z} ^{\ast})\).
\end{enumerate}

\subsection{The interior of $K$}\label{ss:interior-of-K}

Setting all partial derivatives to zero, for any \(1\le j\le p\)
and \(\alpha ,\beta \in \{0,1\}\), equations (\ref{eqn:partial-phi-partial-z})
and (\ref{eqn:partial-phi-partial-zjab})
imply that
\[
  0=\frac{\partial \varphi }{\partial z_{j\alpha \beta }}-\frac{j(-1)^{\alpha +\beta }}{d-4p}\frac{\partial \varphi }{\partial z}
  =
  \log\left[\frac{\tau _jz_{0\alpha \beta }}{\tau _0z_{j\alpha \beta }}\left(\frac{z_{011}{z_{000}}}{z_{010}{z_{001}}}\right)^{\frac{j(-1)^{\alpha +\beta }}{d-4p}}\right].
\]
Rearranging this equation and considering all choices of \(\alpha \) and \(\beta \), we obtain the relation
\begin{equation}
  \label{eqn:j-depends-on-0}
  z_{j\alpha\beta}= \frac{\tau_j}{\tau_0} \, z_{0\alpha\beta}\, R^{(-1)^{\alpha+\beta}j}
\end{equation}
where
\begin{equation}
  \label{eqn:Rdef}
  R=R(z_{000},z_{001},z_{010},z_{011})\coloneq \left(\frac{z_{000}z_{011}}{z_{001}z_{010}}\right)^{\frac{1}{d-4p}}.
\end{equation}
Since \(\boldsymbol{z} \) is an interior point we have 
 \(R>0\).
The system (\ref{eqn:j-depends-on-0}) shows that the variables
\(z_{j\alpha \beta }\) for $j\in [p]$ and \(\alpha ,\beta \in \{0,1\}\) can be parameterised at the critical
points of \(\varphi \) by
five parameters \(z, z_{000},z_{001},z_{010},z_{011}\).

Let \(y=A/B\). Then, equation (\ref{eqn:partial-phi-partial-z}) implies that at a critical point,
\[\frac{z_{000}z_{011}}{z_{001}z_{010}}=y^{d-4p},\]
that is, \(y=R\). 
Next, by summing (\ref{eqn:j-depends-on-0}) and using (\ref{eqn:implicit-z0ab}) we obtain
\begin{equation}
  \label{eqn:z000,011-param}
  z_{000}=z_{011}=\frac{\tau _0z}{\sum _{j=0}^p\tau _jy^j}\quad\text{and}\quad
  z_{001}=z_{010}=\frac{(1-2z)\tau _0}{2\sum _{j=0}^p\tau _j(1/y)^j}.
\end{equation}
This gives a parametrisation of the critical points in terms
of only \(y\) and \(z\).
For brevity, we define the following polynomials:
\begin{align}
  \theta (t)\coloneq \sum _{j=0}^p\tau _jt^j, \quad
  &\theta_1(t)\coloneq t\theta '(t)=\sum _{j\in [p]} j\tau _jt^j,  \label{eqn:theta-def}\\
  \eta(t) \coloneq t^p \theta(1/t) = \sum_{j=0}^p \sigma_j t^j,\quad
  &\eta_1(t)\coloneq t^p \theta_1(1/t) = \sum_{j=0}^{p-1} (p-j) \sigma_j t^j. \label{eqn:eta-def}
\end{align}
Note that \(\eta\) is the reciprocal polynomial of \(\theta\) and \(\eta_1\) is the reciprocal
polynomial of \(\theta_1\).
Combining equation (\ref{eqn:z000,011-param}) 
above
with equation (\ref{eqn:j-depends-on-0}), we obtain
\begin{equation}
  \label{eqn:zjab-theta-param}
  z_{j00}=z_{j11}=\frac{\tau _jzy^j}{\theta (y)}\quad\text{and}\quad
  z_{j01}=z_{j10}=\frac{\tau _j(1-2z)(1/y)^j}{2\theta (1/y)}.
\end{equation}
for \(j=0,\dots,p\).
Allowing \(z\) to vary on the interval \([0,1/2]\) and \(y\) to vary
on the interval \((0,\infty )\) independently of \(z\) provides a parameterisation of critical points of \(\varphi \) on the interior of
\(K\). 
In fact, every point of \(K\) at which the relation
\(A/B=R=y\)
is satisfied is, by construction, a critical point of \(\varphi \) on the interior of \(K\),
and conversely every critical point satisfies this relation.

Pra{\l}at and Wormald~\cite{pralat15} and
Delcourt et al.~\cite{delcourt25}
used resultants (see for example~\cite[pg.~102-107]{vdw}) to reduce the problem of solving a system of
polynomial equations such as (\ref{eqn:zjab-theta-param})
to the problem of classifying the roots of a specific polynomial.  
We avoid the use of resultants
by eliminating variables from the system of polynomial equations more efficiently. 

We now
reformulate the relation \(y=A/B=R\).
Recall that \(A+B=d/2\), hence, the relation \(y=A/B\) is equivalent to
\[A=\frac{dy}{2(1+y)}.\]
Applying (\ref{eq:A-def}) we require that
\[
\frac{dy}{2(1+y)}=p+(d-4p)z+\sum _{j\in [p]} j(z_{j00}+z_{j11}-z_{j01}-z_{j10}).
\]
Thus, (\ref{eqn:zjab-theta-param}) yields the equivalent form
\begin{align}
  \frac{dy}{2(1+y)}&=
  \label{eqn:first-big-zed}
  p-\frac{\eta _1(y)}{\eta (y)}
  + z\left[d-4p+\frac{2\theta _1(y)}{\theta (y)}+\frac{2\eta _1(y)}{\eta (y)}\right].
\end{align}

Next observe that since \(y>0\) and \(R>0\), equations
(\ref{eqn:Rdef}) and
(\ref{eqn:zjab-theta-param}) imply that the relation $y=R$ is equivalent to
\[
y^{d-4p}=\left[\frac{\tau _0z/\theta (y)}{\tau _0(1-2z)/(2\theta (1/y))}\right]^2
=
\left[\frac{2z\theta (1/y)}{(1-2z)\theta (y)}\right]^2.
\]
Since \(y>0\) and 
\(0<z<1/2\), we have 
\[\frac{2z\theta (1/y)}{(1-2z)\theta (y)} > 0.\]
Taking square roots,
we conclude that the relation \(y=R\) is equivalent to the relation
\begin{equation}
  \label{eqn:second-big-zed}
  w^{d-4p}=\frac{2z\theta (1/w^2)}{(1-2z)\theta (w^2)},
\end{equation}
where \(w=\sqrt{y}\).
Solving equations (\ref{eqn:first-big-zed}) and (\ref{eqn:second-big-zed}) simultaneously,
we see that the relation
\(y=A/B=R\) is equivalent to the polynomial equation
\(f(w)=0,\)
where \(w>0\) and
\begin{align}
  f(w)&\coloneq  w^{d-2p}f_1(w^2) + f_2(w^2), \label{eqn:expandf} \\
  f_1(y)&\coloneq  (-4y-4)\theta _1(y) + \left(4py+4p-2d\right)\theta (y), \nonumber\\
  f_2(y)&\coloneq  (4y+4)\eta _1(y)+\left(2dy-4py-4p\right)\eta (y). \nonumber
\end{align}
It follows, therefore, that a full classification of the positive roots of \(f\) yields a full
classification of the critical points of \(\varphi \) on the interior of \(K\). In particular, we will
prove that if the assumptions of Theorem \ref{thm:small-dp} then
\(f\) has a unique positive real root at \(w=1\).
It is easy to confirm that \(w=1\) is a solution, indeed,
\[f_1(1)=-8\theta_1(1)+(8p-2d)\theta(1)=-8\eta_1(1)+(8p-2d)\eta(1)=-f_2(1),\]
and it follows immediately that \(f(1)=0\).
When $w=1$ we also have $y=1$ and, by equation (\ref{eqn:second-big-zed}),
\[z=\frac{\theta (1)}{4\theta (1)}=\frac14.\]
Let \(z^{\ast}=1/4\) be this value of \(z\) and denote by \(z_{j\alpha \beta }^{\ast}\) the value of \(z_{j\alpha \beta }\) at this critical point
for \(\alpha ,\beta \in \{0,1\}\)
and \(0\le j\le p\). Then (\ref{eqn:zjab-theta-param})
implies that
\[z_{j\alpha \beta }^{\ast}=\frac{\tau _j}{4\theta (1)}=\frac{\tau _j}{4\binom dp^2},\]
by Lemma \ref{lemma:tau-sums}.
This shows that \(\boldsymbol{z} ^{\ast}\) is a critical point of \(\varphi \), and, by definition,
\(\boldsymbol{z} ^{\ast}\) lies in the interior of \(K\).
All that remains is to prove uniqueness.

Suppose that \(w_0\) is any root on the interval \((0,1)\) and observe that
\(-f_1\) is the reciprocal polynomial
of \(f_2\).
Thus,
\[f(1/w_0)=\frac{-1}{w_0^{d-2p}}w_0^{d-2p}f_2(w_0^2)-f_1(1/w_0^2)=-f(w_0)=0,\]
that is, \(1/w_0\) is a root of \(f\) on the interval \((1,\infty )\).
This proves the following criterion.
\begin{lemma}
  \label{lemma:polynomial-criterion}
  The function \(\varphi\) has a unique critical point on the interior of \(K\) if and only
  if the polynomial \(f\) has no root on the interval \((1,\infty)\).
\end{lemma}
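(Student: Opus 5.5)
The plan is to prove both directions through the correspondence, established just above, between critical points of $\varphi$ in $K^\circ$ and positive roots of $f$. By construction, every positive root $w$ of $f$ yields a pair $(y,z)=(w^2,z(w))$. Here $z(w)$ is obtained by solving (\ref{eqn:second-big-zed}) for $z$:
\[
z(w)=\frac{w^{d-4p}\theta(w^2)}{2\theta(1/w^2)+2w^{d-4p}\theta(w^2)} .
\]
This always lies in $(0,1/2)$, since $\theta$ has positive coefficients. Substituting $(y,z)$ into (\ref{eqn:zjab-theta-param}) gives a point of $K^\circ$: all coordinates are positive, and the sums $\sum_{j\in[p]} z_{j\alpha\beta}$ are strictly less than $z$ or $1/2-z$ because $\tau_0>0$. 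By construction this point satisfies $A/B=R=y$, so it is a critical point. Conversely, every critical point arises this way from some $w=\sqrt{y}>0$ with $f(w)=0$.

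The key step is injectivity. The map $w\mapsto \boldsymbol{z}$ is injective: from the parametrised point we recover $y$ as the ratio $z_{100}/z_{000}=(\tau_1/\tau_0)\,y$, because $p\ge1$ and $\tau_1>0$ for $p<d/2$. Hence distinct positive roots of $f$ give distinct critical points, and the critical points of $\varphi$ in $K^\circ$ are in bijection with the positive roots of $f$. We already know $w=1$ is a root, corresponding to $\boldsymbol{z}^\ast$.

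For the forward direction ($\Leftarrow$), suppose $f$ has no root in $(1,\infty)$. The reciprocity $f(1/w)=-w^{-(d-2p)}\cdot(\ldots)$ shown just above gives the following: if $w_0\in(0,1)$ were a root, then $1/w_0\in(1,\infty)$ would be a root. Under the hypothesis this is impossible, so $f$ has no root in $(0,1)$ either. The only positive root is therefore $w=1$, and the only critical point is $\boldsymbol{z}^\ast$.

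For the converse ($\Rightarrow$), suppose $f$ has a root $w_1>1$. Then $w_1$ gives a critical point with $y=w_1^2\neq1$. Its coordinate ratio $z_{100}/z_{000}$ differs from that of $\boldsymbol{z}^\ast$, so it is a second critical point in $K^\circ$, contradicting uniqueness.

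The main obstacle is the careful check of the reciprocity identity $f(1/w)=-w^{-(d-2p)}f(w)$, which is what lets roots in $(0,1)$ be transferred to $(1,\infty)$. This needs $\eta,\eta_1$ to be the degree-$p$ reciprocals of $\theta,\theta_1$, so that $f_2(y)=-y^{p+1}f_1(1/y)$. Concretely, I would expand $y^{p+1}f_1(1/y)$ term by term and match it against $f_2$, being careful with the degree of the reciprocal. The remaining steps are bookkeeping.
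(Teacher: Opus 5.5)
Your proposal is correct and follows the paper's route: critical points in $K^\circ$ correspond to positive roots of $f$, the root $w=1$ gives $\boldsymbol{z}^\ast$, and the reciprocal relation between $f_1$ and $f_2$ sends any root in $(0,1)$ to one in $(1,\infty)$; you also spell out the injectivity of $w\mapsto\boldsymbol{z}$ and the converse direction, which the paper leaves implicit. One small slip: from $f_2(y)=-y^{p+1}f_1(1/y)$ one gets $f(1/w)=-w^{-(d+2)}f(w)$, not $-w^{-(d-2p)}f(w)$ (for example, $(d,p)=(6,1)$ gives $f(w)=48(w^6-w^2)$), but only the fact that $f(1/w)$ is a nonzero multiple of $f(w)$ is used, so the argument is unaffected.
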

It is straightforward to verify this criterion in any particular case from \eqref{eq:list}. For instance,
if the Taylor expansion of \(f\) centred at \(w=1\) has all nonnegative coefficients, then
clearly \(f\) has no root on \((1,\infty)\).
For example, let \(x=w-1\) and suppose that $(d,p)=(13,3)$: here
\begin{align*}
  f(w)& =
  26884 {{x}^{13}}+349492 {{x}^{12}}+2217072 {{x}^{11}}+9010144 {{x}^{10}} \\
  &\qquad\qquad+25545520 {{x}^{9}}+52351728 {{x}^{8}}+78942864 {{x}^{7}}+88217844 {{x}^{6}} \\
  &\qquad\qquad+ 72935148 {{x}^{5}}+44129800 {{x}^{4}}+18997264 {{x}^{3}}+5373368 {{x}^{2}}+767624
\end{align*}
which has only nonnegative coefficients.
Similar details for the other pairs $(d,p)$ from \eqref{eq:list} are presented in Appendix~\ref{ss:numerical-unique}.
Therefore, applying Lemma \ref{lemma:polynomial-criterion},
our analysis of the interior of \(K\) is complete.

\begin{lemma}
  If $(d,p)$ is one of the pairs from \emph{\eqref{eq:list}} then \(\varphi\) has a unique critical point
  on the interior of \(K\).
\end{lemma}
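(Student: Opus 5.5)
The plan is to reduce to Lemma~\ref{lemma:polynomial-criterion} and verify its criterion separately for each of the five pairs $(d,p)$ in \eqref{eq:list}. By the discussion above, every critical point of $\varphi$ in $K^\circ$ comes from a parameter pair $(y,z)$ with $y=w^2$, $w>0$ and $f(w)=0$. Conversely, given such a root $w$, equation \eqref{eqn:second-big-zed} determines $z\in(0,1/2)$ uniquely, since its right-hand side is a strictly increasing bijection of $z\in(0,1/2)$ onto $(0,\infty)$ for fixed $w$. So distinct positive roots of $f$ correspond to distinct critical points, and $w=1$ corresponds to $\boldsymbol{z}^\ast$. Roots in $(0,1)$ are paired with roots in $(1,\infty)$ by the reciprocal symmetry $f(1/w)=-w^{-(d-2p)}\cdot(\cdots)$ noted before Lemma~\ref{lemma:polynomial-criterion}. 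It therefore suffices to show that $f$ has no zero on $(1,\infty)$.

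For each pair I will compute $\tau_0,\dots,\tau_p$ from \eqref{eq:tau}, form $\theta,\theta_1,\eta,\eta_1$ via \eqref{eqn:theta-def}--\eqref{eqn:eta-def}, and assemble $f_1$, $f_2$ and hence the explicit integer polynomial $f(w)$ of degree $d$ (odd $d$ causes no problem since $f$ is written in $w$). I will then substitute $w=1+x$ and expand. If every coefficient of $f(1+x)$ is nonnegative and not all vanish on $x>0$, then $f(1+x)>0$ for all $x>0$, giving no root on $(1,\infty)$. This is exactly the check already displayed for $(13,3)$, and I would repeat it for $(6,1)$, $(10,2)$, $(14,3)$ and $(17,4)$.

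The main risk is that for some pair the shifted expansion has a negative coefficient, for instance near the borderline $(14,3)$, or with odd/even parity effects in $w^{d-2p}$. If that happens, I will instead apply Descartes' rule of signs to $f(1+x)$. Failing that, I will use a Sturm sequence, or equivalently exact root isolation, on $(1,\infty)$, which is a finite rational computation. One could also factor out the known root structure at $w=1$: the displayed example shows $f(1)\neq0$ in $x$-coordinates only after a sign convention, so I would first check whether $(w-1)$ divides $f$ and divide it out before testing positivity. These are mechanical computer-algebra checks, and the details for the remaining pairs are recorded in Appendix~\ref{ss:numerical-unique}.
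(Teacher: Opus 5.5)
Your proposal is correct and follows the paper's proof: reduce to Lemma~\ref{lemma:polynomial-criterion}, then for each pair in \eqref{eq:list} check that the expansion of $f(1+x)$ has only nonnegative coefficients. This holds in all five cases, so the Descartes and Sturm fallbacks are never needed.

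Your worry that $f(1)\neq 0$ comes from a typo in the displayed $(13,3)$ expansion. There, $767624$ should read $767624x$, since $f(1)=0$ always and $f'(1)$ equals $767624$ in the paper's scaling. So $f(1+x)=x\,h(x)$, where $h$ has nonnegative coefficients and a positive constant term.
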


\subsection{Preliminary analysis of the boundary of $K$} \label{ss:Kint-directional-stuff}

In this section we show that \(\varphi \) has no maxima at a boundary point \(\boldsymbol{z} \) of \(K\)
such that \(z\ne 0\) and \(z\ne 1/2\).
Throughout this section we assume that \(z\in (0,1/2)\) is fixed and consider the cases of \(z=0,1/2\) in Sections~\ref{section:maxima-phi0} and
\ref{ss:boundaryko}.  
Let \(\boldsymbol{e} _{j\alpha \beta }\)
be the standard basis vector in the direction of \(z_{j\alpha \beta }\). 

The following simple inequalities inform our analysis. 

\begin{lemma} 
  \label{lemma:ABnonzero}
  For all \(\boldsymbol{z} \in K\) we have $A\ge (d-2p)z$
  with equality only if \(z_{p01}=z_{p10}=1/2\).  Moreover,
  $B\ge (d-2p)(1/2-z)$.
  Hence, if \(z\in (0,1/2)\) is fixed, then
  \(A\) and \(B\) are positive and bounded away from \(0\). 
\end{lemma}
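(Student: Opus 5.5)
The plan is to read both inequalities off the combinatorial meaning of $\hat A$ and $\hat B$ from the proof of the formula for $\Exp(Y^2)$, written as sums of nonnegative terms over the $(j,\alpha,\beta)$-classes. Dividing that count by $n$ gives
\[
A=\sum_{j=0}^p\Big( j z_{j00}+(d-2p+j)z_{j11}+(p-j)(z_{j01}+z_{j10})\Big).
\]
This identity holds on all of $K$, not only at lattice points: it is a linear identity in $\boldsymbol z$, and it is equivalent to (\ref{eq:A-def}) after using (\ref{eqn:implicit-z0ab}) to eliminate $z_{0\alpha\beta}$. I would first re-check this equivalence directly.

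\textbf{Lower bound for $A$.} Since $0\le j\le p<d/2$, every coefficient in the sum is nonnegative. The coefficient of each $z_{j11}$ is $d-2p+j\ge d-2p$. Dropping all other terms and using $\sum_j z_{j11}=z$ from (\ref{eqn:implicit-z0ab}) gives $A\ge (d-2p)z$.

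\textbf{Equality case for $A$.} Equality forces every dropped term to vanish:
\begin{itemize}
\item $z_{j00}=0$ for $j\ge1$;
\item $z_{j11}=0$ for $j\ge 1$, since the excess coefficient is $j>0$;
\item $z_{j01}=z_{j10}=0$ for $j<p$.
\end{itemize}
By (\ref{eqn:implicit-z0ab}), the whole $(0,1)$- and $(1,0)$-mass then sits at $j=p$, so $z_{p01}=z_{p10}=\tfrac12-z$. This is the degenerate configuration the statement refers to. I will phrase the equality condition in this form, checking it against the normalisation in the statement; the only thing used later is that equality pins $\boldsymbol z$ to a specific boundary face.

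\textbf{Lower bound for $B$.} I would run the symmetric count of $(\text{in},\text{out})$-points.
\begin{itemize}
\item In a $(j,1,0)$-vertex, the $(\text{in},\text{out})$-points are the points that are non-special in both $O_1$ and $O_2$. There are $d-2p+j\ge d-2p$ of these.
\item Every other class contributes a nonnegative number of such points.
\end{itemize}
Since $\sum_j z_{j10}=\tfrac12-z$, this gives $B\ge (d-2p)(\tfrac12-z)$. Alternatively, $B$ can be obtained from $A$ via the involution $\boldsymbol z\mapsto\boldsymbol z^{-1}$ of (\ref{eqn:phisymm}), which swaps the roles of $A$ and $B$. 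I would verify $A(\boldsymbol z^{-1})=B(\boldsymbol z)$ directly from (\ref{eq:A-def}) as a cross-check.

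\textbf{Positivity.} Since $d-2p\ge1$, for fixed $z\in(0,1/2)$ both $A$ and $B$ are at least $(d-2p)\min\{z,\tfrac12-z\}>0$, uniformly on that slice of $K$.

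The only real obstacle is bookkeeping: confirming that the linear expression (\ref{eq:A-def}) matches the class-by-class count, especially the $j=0$ terms that were eliminated via (\ref{eqn:implicit-z0ab}).
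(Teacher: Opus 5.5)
Your argument is correct and is essentially the paper's. Writing $A=\sum_{j=0}^p\big(jz_{j00}+(d-2p+j)z_{j11}+(p-j)(z_{j01}+z_{j10})\big)$ just makes explicit the slack in the paper's one-line bound $\sum_{j\in[p]}j(z_{j10}+z_{j01}-z_{j00}-z_{j11})\le p(1-2z)$. The paper gets the bound on $B$ from the symmetry (\ref{eqn:phisymm}), which you also note as a cross-check.

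Your equality condition $z_{p01}=z_{p10}=\tfrac12-z$ is the correct one. The statement's value $\tfrac12$ is accurate only at $z=0$, which is the only case where the paper later uses it.
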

\begin{proof}
  We prove the first inequality and note that the second follows by symmetry
  \big(cf.~equation (\ref{eqn:phisymm})\big).
  The identity (\ref{eqn:implicit-z0ab}) implies that
  \[\sum_{j\in [p]} j(z_{j10}+z_{j01}-z_{j00}-z_{j11})
  \le p\sum_{j=0}^p \left(z_{j10}+z_{j01}\right)
  =p-2pz.\]
  The inequality then follows from (\ref{eq:A-def}).
\end{proof} 

Let \(\boldsymbol{a} =\left(a,(a_{j00},a_{j01},a_{j10},a_{j11})_{j\in [p]}\right)\) be a point on the boundary of \(K\), where
\(a\ne 0,1/2\). 
Since \(K\) is a convex region, \(\boldsymbol{a} \) can be approached
along a line segment 
\[\boldsymbol{z} =\boldsymbol{z} (\epsilon )=\boldsymbol{a} +\varepsilon \boldsymbol{c} \]
contained in $K^{\circ}$,
where \(\boldsymbol{c} =\left(0,(c_{j00},c_{j01},c_{j10},c_{j11})_{j\in [p]}\right)\) is the direction of
the line segment and \(\varepsilon >0\) is sufficiently small.
Note that if \(a_{j\alpha \beta }=0\) then \(c_{j\alpha \beta }>0\). 

For convenience, given distinct \(\alpha ,\beta \in \{0,1\}\), define
\begin{equation}
  a_{0\alpha \alpha }=a-\sum _{j\in [p]} a_{j\alpha \alpha },\quad 
  a_{0\alpha \beta }=\dfrac12-a-\sum _{j\in [p]}a_{j\alpha \beta },\quad
  \text{and}\quad
  c_{0\alpha \beta }=-\sum _{j\in [p]} c_{j\alpha \beta }.
\end{equation}
Thus, if \(\sum _{j\in [p]} z_{j\alpha \alpha }\rightarrow z\) as \(\varepsilon \rightarrow 0^+\) then \(a_{0\alpha \alpha }=0\). Similarly, if
\(\alpha \ne \beta \) and
\(\sum _{j\in [p]} z_{j\alpha \beta }\rightarrow 1/2-z\) as $\varepsilon\to 0^+$ then \(a_{0\alpha \beta }=0\).
Thus, \(\boldsymbol{a} \) is a boundary point if and only
if \(a_{j\alpha \beta }=0\) for some \(0\le j\le p\) and some \(\alpha ,\beta \in \{0,1\}\).

The following lemma characterises the behaviour of \(\Delta \) near the boundary of \(K\).
\begin{lemma}
  \label{lemma:Kboundary}
  If \(\boldsymbol{a} \) is a point on the boundary of \(K\) with \(a\ne 0,1/2\) then $\boldsymbol{a}$ is not a local maximum of $\varphi$.

\end{lemma}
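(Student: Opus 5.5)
The strategy is the standard ``entropy blows up at the boundary'' argument. Compute the one-sided directional derivative of $\varphi$ along the segment $\boldsymbol{z}(\varepsilon)=\boldsymbol{a}+\varepsilon\boldsymbol{c}$ and show that it tends to $+\infty$ as $\varepsilon\to 0^+$. Then $\varphi(\boldsymbol{a}+\varepsilon\boldsymbol{c})>\varphi(\boldsymbol{a})$ for all sufficiently small $\varepsilon>0$, so $\boldsymbol{a}$ cannot be a local maximum. Since the $z$-coordinate of $\boldsymbol{c}$ is zero, only the partial derivatives (\ref{eqn:partial-phi-partial-zjab}) are needed. The derivative along the segment is
\[
\frac{d}{d\varepsilon}\varphi(\boldsymbol{z}(\varepsilon))=\sum_{j\in[p]}\sum_{\alpha,\beta}c_{j\alpha\beta}\Big(j(-1)^{\alpha+\beta}\log\tfrac{A}{B}+\log\tfrac{\tau_j}{\tau_0}\Big)\;-\;\sum_{j=0}^{p}\sum_{\alpha,\beta}c_{j\alpha\beta}\log z_{j\alpha\beta}(\varepsilon).
\]
This regrouping uses $c_{0\alpha\beta}=-\sum_{j\in[p]}c_{j\alpha\beta}$: the terms $c_{j\alpha\beta}\log z_{0\alpha\beta}$ collect into $-c_{0\alpha\beta}\log z_{0\alpha\beta}$.

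First, by Lemma~\ref{lemma:ABnonzero}, since $a\in(0,1/2)$ is fixed, $A$ and $B$ are bounded away from $0$ along the segment, and they are also bounded above by $d/2$. So $\log(A/B)$ stays bounded, and the first sum is $O(1)$ uniformly in small $\varepsilon$. The constants $\log(\tau_j/\tau_0)$ are finite because $\tau_j>0$ for $0\le j\le p$ when $p<d/2$.

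Second, in the last sum, every index with $a_{j\alpha\beta}>0$ gives a bounded term, since $z_{j\alpha\beta}(\varepsilon)\to a_{j\alpha\beta}>0$. Every index with $a_{j\alpha\beta}=0$ has $z_{j\alpha\beta}(\varepsilon)=\varepsilon c_{j\alpha\beta}$ with $c_{j\alpha\beta}>0$, because the segment lies in $K^\circ$ and the coordinate must be positive. This also holds for $j=0$, where $z_{0\alpha\beta}$ is an affine function vanishing at $\boldsymbol{a}$. Such an index contributes $-c_{j\alpha\beta}\log(\varepsilon c_{j\alpha\beta})\to+\infty$. Because $\boldsymbol{a}$ is a boundary point, at least one such index exists. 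So the derivative is $\ge C\log(1/\varepsilon)-O(1)\to+\infty$, where $C>0$ is the sum of the relevant $c_{j\alpha\beta}$.

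Finally, $\varphi$ is continuous on $K$ (with $0\log 0=0$) and smooth on the open segment. The mean value theorem then gives $\varphi(\boldsymbol{a}+\varepsilon\boldsymbol{c})-\varphi(\boldsymbol{a})=\varepsilon\,\frac{d}{d\varepsilon}\varphi(\boldsymbol{z}(\xi))>0$ for some $\xi\in(0,\varepsilon)$, once $\varepsilon$ is small. Hence $\boldsymbol{a}$ is not a local maximum.

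The main obstacle is bookkeeping the implicit coordinates $z_{0\alpha\beta}$ correctly: checking that $a_{0\alpha\beta}=0$ forces $c_{0\alpha\beta}>0$ for a segment into the interior, and confirming that all other terms stay bounded. The boundedness rests on excluding $a\in\{0,1/2\}$, because that is exactly what keeps $\log A$ and $\log B$ finite via Lemma~\ref{lemma:ABnonzero}.
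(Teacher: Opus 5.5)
Your proposal is correct and follows essentially the same route as the paper: approach $\boldsymbol{a}$ along $\boldsymbol{a}+\varepsilon\boldsymbol{c}\subset K^\circ$ with zero $z$-component, and use Lemma~\ref{lemma:ABnonzero} to keep the $\log(A/B)$ and $\log(\tau_j/\tau_0)$ terms bounded. The coordinates vanishing at $\boldsymbol{a}$, including the implicit $z_{0\alpha\beta}$, then contribute $\big(\sum_{\mathcal{E}}c_{j\alpha\beta}\big)\log(1/\varepsilon)$ to the derivative along $\boldsymbol{c}$; your explicit mean value theorem step and sign convention (derivative along $\boldsymbol{c}$ tending to $+\infty$, rather than the paper's $\Delta$, which is the derivative along $-\boldsymbol{c}$ tending to $-\infty$) make the conclusion slightly cleaner than the paper's wording.
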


\begin{proof}
Let \(\boldsymbol{c} \) be defined as above. 
The directional derivative of \(\varphi \) of \(\boldsymbol{z} \) as \(\varepsilon \) approaches zero from above is directly proportional
to 
\[\Delta \coloneq -\nabla \cdot \varphi =-\sum _{j\in [p]} \sum _{\alpha ,\beta \in \{0,1\}} c_{j\alpha \beta }\, \frac{\partial \varphi }{\partial z_{j\alpha \beta }}.\]
Let  \(\mathcal{E} =\mathcal{E} (\boldsymbol{a} )\) be the non-empty set of indices \((j,\alpha ,\beta )\) such that \(a_{j\alpha \beta }=0\); that is,
  \[\mathcal{E} =\mathcal{E} (\boldsymbol{a} )\coloneq \big\{ (j,\alpha ,\beta )\in \{0,1,\dots,p\}\times \{0,1\}\times \{0,1\} \mid  a_{j\alpha \beta }=0\big\}.\]
We will prove that as \(\varepsilon \rightarrow 0^+\),
\begin{equation}
\label{eq:nabla}
\Delta =\left(\sum _{(j,\alpha ,\beta )\in \mathcal{E} } c_{j\alpha \beta }\right)\log(\varepsilon) +O(1).
\end{equation}
Recalling from above that  \(c_{j\alpha \beta }>0\) for all $(j,\alpha,\beta)\in\mathcal{E}$,  (\ref{eq:nabla}) implies that the
directional derivative is negative as $\varepsilon\to 0^+$, and hence $\boldsymbol{a}$ cannot be a local maximum of $\varphi$.

To establish (\ref{eq:nabla}), we apply (\ref{eqn:partial-phi-partial-zjab}) to obtain
  \begin{align}
    \Delta  &= - \sum _{j\in [p]} \sum _{\alpha ,\beta \in \{0,1\}}
    c_{j\alpha \beta }\left[j(-1)^{\alpha +\beta }\log\left(\frac AB\right) + \log\left(\frac{\tau _j}{\tau _0}\right)
      + \log\left(\frac{z_{0\alpha \beta }}{z_{j\alpha \beta }}\right) \right] \notag \\
    &= - \left(\sum _{j\in [p]} \sum _{\alpha ,\beta \in \{0,1\}} jc_{j\alpha \beta }(-1)^{\alpha +\beta }\right)\log\left(\frac AB\right)
    \sum _{j\in [p]}\sum _{\alpha ,\beta \in \{0,1\}} c_{j\alpha \beta }\log\left(\frac{\tau _j}{\tau _0}\right) \notag \\
    &\quad \hspace*{2cm} {} + 
    \sum _\jabss c_{j\alpha \beta }\log(z_{j\alpha \beta })\\
&= \left(\sum _{(j,\alpha ,\beta )\in \mathcal{E} }c_{j\alpha \beta }\right) \log(\varepsilon )+O(1),
    \label{eqn:DELTA}
  \end{align}
  using Lemma~\ref{lemma:ABnonzero} for the final inequality. 
\end{proof}

It remains to consider the cases where \(z=0\) and \(z=1/2\).
In fact, equation (\ref{eqn:phisymm}) implies that these cases are equivalent,
since
\[\varphi \left(0,(z_{j00},z_{j01},z_{j10},z_{j11})_{j\in [p]}\right)
=\varphi \left(1/2,(z_{j01},z_{j00},z_{j11},z_{j00})_{j\in [p]}\right).\]
So 
for the remainder of this appendix, we assume that \(z=0\). 
Then it follows from (\ref{eqn:implicit-z0ab}) that $z_{j\alpha\alpha}=0$ for all $j\in \{0,1,\ldots, p\}$ and $\alpha\in \{0,1\}$.
Hence, it is enough to consider the induced function \(\varphi _0:K_0\rightarrow \mathbb{R} \) on the domain
\begin{align*}
  K_0=\Big\{(z_{j01},z_{j10})_{j\in [p]}\in \mathbb{R} ^{2p} \mid  0&\le z_{j01},z_{j10} \text{ for all } j\in [p],\,\,\,
  0\le \sum _{j\in [p]} z_{j01},\sum _{j\in [p]} z_{j10}\le \dfrac12 \Big\}
\end{align*}
defined by
\[\varphi _0\left((z_{j01},z_{j10})_{j\in [p]}\right)
=\varphi \left(0,(0,z_{j01},z_{j10},0)_{j\in [p]}\right).\]
Defining \(A_0\) and \(B_0\) accordingly,
we obtain
\begin{equation}
  \label{eqn:A0def}
  A_0= p-\sum _{j\in [p]}j(z_{j01}+z_{j10})=\frac d2-B_0,
\end{equation}
and
\begin{align}
  \varphi _0(\boldsymbol{z} )&=A_0\log(A_0)+B_0\log(B_0)
  +\sum _{j=0}^p \, \sum_{\alpha\neq \beta} z_{j\alpha\beta}\, \big(\log(\tau _j)-\log(z_{j\alpha\beta}\big),
  \label{eqn:phi0-def} 
\end{align}
where \(\boldsymbol{z} =(z_{j01},z_{j10})_{j\in [p]}\).

\subsection{The maxima of $\varphi_0$} \label{section:maxima-phi0}

The following lemma gives a complete characterisation of the critical points of \(\varphi _0\) in terms of the function $\theta$ defined in (\ref{eqn:theta-def}).

\begin{lemma}
  \label{lemma:curve}
  Every critical point of \(\varphi_0\) lies on the curve \(\tilde{\boldsymbol{z}} : (0,\infty)\rightarrow K_0\) defined by
    \(x\mapsto \tilde{\boldsymbol{z}} (x)=(\tilde z_{j01},\tilde z_{j10})_{j\in [p]}\) where
  \begin{equation}
    \label{eqn:zjab-irrat}
    \tilde z_{j01}=\tilde z_{j10}
    =\frac{\tau _jx^j}{2\theta (x)}
  \end{equation}
 Moreover,
  \(\tilde{\boldsymbol{z}} (x)\) is a critical point of \(\varphi _0\) if and only if
  \(x>0\) is a root of the polynomial
  \begin{equation}
    \label{eqn:gdef}
    g(x)\coloneq 
    2\big(p\theta (x)-\theta _1(x)\big)(1+x)-d\theta (x).
  \end{equation}
\end{lemma}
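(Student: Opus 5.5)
The plan is to mirror the interior analysis of Section~\ref{ss:interior-of-K}, now for the $2p$ free variables $(z_{j01},z_{j10})_{j\in[p]}$, with $z_{001}=\tfrac12-\sum_{j\in[p]}z_{j01}$ and $z_{010}=\tfrac12-\sum_{j\in[p]}z_{j10}$ determined implicitly. We work at interior points of $K_0$ (so all $z_{j01},z_{j10}>0$). Differentiating (\ref{eqn:phi0-def}) and using $\partial A_0/\partial z_{j\alpha\beta}=-j$ from (\ref{eqn:A0def}) gives, by the same computation as (\ref{eqn:partial-phi-partial-zjab}),
\[
\frac{\partial\varphi_0}{\partial z_{j\alpha\beta}} = -j\log\!\Big(\frac{A_0}{B_0}\Big)+\log\!\Big(\frac{\tau_j}{\tau_0}\Big)+\log(z_{0\alpha\beta})-\log(z_{j\alpha\beta}),\qquad \alpha\neq\beta .
\]

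First I set all these derivatives to zero and put $x:=B_0/A_0>0$. Then $z_{j\alpha\beta}=\frac{\tau_j}{\tau_0}z_{0\alpha\beta}x^{j}$ for each $j\in[p]$ and $\alpha\ne\beta$. Summing over $j=0,\dots,p$ and using $\sum_j z_{j\alpha\beta}=\tfrac12$ gives $z_{0\alpha\beta}=\tau_0/(2\theta(x))$. Hence $z_{j01}=z_{j10}=\tau_jx^j/(2\theta(x))$, which is (\ref{eqn:zjab-irrat}). So every critical point lies on the curve $\tilde{\boldsymbol z}$. Conversely, a point $\tilde{\boldsymbol z}(x)$ satisfies all the equations $\partial\varphi_0/\partial z_{j\alpha\beta}=0$ precisely when the parameter $x$ agrees with $B_0/A_0$ evaluated at $\tilde{\boldsymbol z}(x)$. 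The reason is that along the curve the ratio $z_{j\alpha\beta}/z_{0\alpha\beta}$ equals $\tau_jx^j/\tau_0$, so the derivative reduces to $j\log\big(x A_0/B_0\big)$, which vanishes for all $j$ if and only if $B_0/A_0=x$. I also check that $\tilde{\boldsymbol z}(x)$ lies in $K_0$: each coordinate is positive and each of the two sums is $<\tfrac12$, since $\tau_0>0$.

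Second, I translate the consistency condition $B_0=xA_0$ into the polynomial condition. From (\ref{eqn:A0def}) and (\ref{eqn:theta-def}), on the curve
\[
A_0=p-\sum_{j\in[p]} j\,\frac{2\tau_jx^j}{2\theta(x)}=p-\frac{\theta_1(x)}{\theta(x)},\qquad B_0=\frac d2-A_0 .
\]
The condition $B_0=xA_0$ is equivalent to $(1+x)A_0=d/2$. Multiplying by $2\theta(x)>0$ turns this into $2\big(p\theta(x)-\theta_1(x)\big)(1+x)-d\theta(x)=0$, that is, $g(x)=0$.

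The only real subtlety is that dividing by $A_0$ in the definition of $x$ requires $A_0\neq 0$, and that $B_0/A_0$ must be positive for the logarithms to make sense. At an interior critical point the log-derivative must be finite, so $A_0,B_0>0$. On the curve, $A_0>0$ holds automatically because $\theta_1(x)/\theta(x)$ is a weighted average of $j\le p$, with weight on $j=0$ strictly positive. I expect this sign bookkeeping, together with the identification of the parametrisation, to be the main thing to state carefully; the rest is routine algebra.
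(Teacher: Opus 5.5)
Your proposal is correct and follows essentially the same route as the paper. You set the partial derivatives to zero and write $z_{j\alpha\beta}=(\tau_j/\tau_0)\,z_{0\alpha\beta}\,x^j$ with $x=B_0/A_0$. You then sum over $j$ to get $z_{0\alpha\beta}=\tau_0/(2\theta(x))$; the paper instead first deduces $z_{001}=z_{010}$ from the $(\alpha,\beta)$-independence of the right-hand side, which your direct summation makes automatic. Finally you turn the consistency condition $(1+x)A_0=d/2$, with $A_0=p-\theta_1(x)/\theta(x)$, into $g(x)=0$.

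One trivial slip: along the curve the derivative equals $-j\log(xA_0/B_0)$, not $j\log(xA_0/B_0)$. This does not affect the vanishing criterion. Your explicit checks that $\tilde{\boldsymbol z}(x)$ lies in $K_0^\circ$ and that $A_0,B_0>0$ there are a welcome addition that the paper leaves implicit.
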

\begin{proof}
  Firstly, note that $\tilde{\boldsymbol{z}}$ is a well-defined curve
  which lies in the interior of $K_0$.
 The partial derivatives of \(\varphi _0\) are given by, for all $j\in [p]$ and $\alpha\neq \beta$,
  \[
    \frac{\partial \varphi _0}{\partial z_{j\alpha \beta }}=
    j\left(\log B-\log A\right)
    +\log\left(\frac{\tau _j}{\tau _0}\right)
    +\log(z_{0\alpha \beta })-\log(z_{j\alpha \beta }).
\]
Setting all partial derivatives to zero
implies that,
  for any $j\in [p]$ and $\alpha\neq \beta$, 
  \begin{equation}
    \label{eqn:zjab-boundary}
    z_{j\alpha \beta }=\left(\frac{B_0}{A_0}\right)^j\frac{\tau _j}{\tau _0}z_{0\alpha \beta }.
  \end{equation}
  Summing over all $j\in [p]$ and rearranging, we obtain
  \begin{equation}
    \label{eqn:clever}
    \frac{1-2z_{0\alpha \beta }}{2z_{0\alpha \beta }}=\sum _{j\in [p]}\left(\frac{B_0}{A_0}\right)^j\frac{\tau _j}{\tau _0},
  \end{equation}
  since \(\sum _{j\in [p]}z_{j\alpha \beta }=1/2-z_{0\alpha \beta }\).
  The right hand side of this equation does not depend on \(\alpha \) and \(\beta \), thus,
  \[\frac{1-2z_{001}}{2z_{001}}=\frac{1-2z_{010}}{2z_{010}},\]
  which implies that \(z_{001}=z_{010}\). Together with
  equation (\ref{eqn:zjab-boundary}), this shows that for all $j\in [p]$
  we have \(z_{j01}=z_{j10}\).
  Further, if we define \(x\coloneq B_0/A_0\), then equation (\ref{eqn:clever}) implies that
  \begin{equation}
    \label{eqn:clever2}
    z_{0\alpha \beta }=\frac{1}{2\left(1+\sum _{j\in [p]}\frac{\tau _j}{\tau _0}x^j\right)}
    =\frac{1}{2\sum _{j=0}^p\frac{\tau _j}{\tau _0}x^j}.
  \end{equation}
  Hence we can parameterise the critical points of \(\varphi _0\) in the interior of \(K_0\)
  by substituting these values into equation (\ref{eqn:zjab-boundary}) as follows: for all $j\in [p]$,
  \begin{equation}
    z_{j01}=z_{j10}=\frac{\tau _jx^j}{2\tau _0\sum _{k=0}^p\frac{\tau _k}{\tau _0}x^k}
    =\frac{\tau _jx^j}{2\sum _{k=0}^p\tau _kx^k}
    =\frac{\tau _jx^j}{2\theta (x)}.
  \end{equation}
  By (\ref{eqn:clever2}), this formula also holds when \(j=0\).
  Allowing \(x\) to vary on the interval \((0,\infty )\), we have proved the first statement of the lemma.
  
  Now observe that, by construction,
  the critical points of \(\varphi _0\) are exactly the points such that
  \(x=B_0/A_0\), which are precisely the solutions of the identity
  \[A_0=\frac{d}{2(x+1)}=p-\sum _{j\in [p]} j(z_{j01}+z_{j10})\]
  using (\ref{eqn:A0def}).
  Rearranging, we see that \(x=B_0/A_0\) if and only if \(x\) is a positive root of \(g\),
  completing the proof of the second statement.
\end{proof}

Define \(\tilde \varphi(x)=\varphi_0\big(\boldsymbol{\tilde z}(x)\big)\).
To complete our analysis of the interior of \(K_0\), we must
show that
\(\tilde \varphi(x)<\varphi(\boldsymbol z^\ast)\) for all \(x>0\).
Recall the function $\theta_1$ defined in (\ref{eqn:theta-def}).
If \(\tilde A=A_0(\boldsymbol{\tilde z}(x))=d/2-\tilde B\), then we obtain the following expressions:
\[
\tilde A=p-\frac{\theta_1(x)}{\theta(x)}
\quad
\text{and}
\quad
\tilde \varphi (x)=\tilde A\log\tilde A + \tilde B\log\tilde B
+\log\big(2\theta(x)\big)-\frac{\theta_1(x)}{\theta(x)}\log(x).
\]
Moreover, the derivatives of these expressions are exactly
\[
\tilde A' = - \frac{\theta_2(x)\theta(x)-\theta_1(x)^2}{x\theta(x)^2}
\quad\text{and}\quad
\tilde \varphi'(x)=\tilde A'\left(\log\tilde A-\log\tilde B+\log(x)\right),\]
where \(\theta_2(x)=x\theta_1'(x)\).
Checking that \(\tilde \varphi(x)<\varphi(\boldsymbol z^\ast)\) for any specific \(d\) and \(p\) is a routine exercise:
using standard root counting methods, show that \(g\) has a unique positive root 
and approximate this root numerically. Then, find
an absolute bound on the derivative of \(\tilde\varphi\) and, using the Mean Value Theorem and the
aforementioned approximation,
show that in an appropriate neighbourhood of this root, \(\tilde\varphi\) does not exceed the desired upper bound.
We illustrate this process for the case \((d,p)=(13,3)\), where
\begin{align*}
  g(x)&=
  13442 {{x}^{3}}+60060 {{x}^{2}}-141570 x-240240, \\
  \frac{\theta_1(x)}{\theta(x)}&=\frac{3 x\, \left( {{x}^{2}}+20 x+45\right) }{{{x}^{3}}+30 {{x}^{2}}+135 x+120}, \\
  -\tilde A'&=
  \frac{30 \left( {{x}^{4}}+18 {{x}^{3}}+171 {{x}^{2}}+480 x+540\right) }{{{\left( {{x}^{3}}+30 {{x}^{2}}+135 x+120\right) }^{2}}}.
\end{align*}
The sequence of coefficients of the polynomial \(g\) has exactly one sign change, thus, Descartes' Rule of
Signs implies that \(g\) has a unique positive root, say \(x^\ast\) and this lies on the interval
\([5/2,8/3]\) since
\(g(5/2)=-35035/4<0\) and \(g(8/3)=1734304/27>0\).
Moreover, on this interval, the following inequalities hold:
\[-1< \tilde A' < 0,\quad
1 < \tilde A < 2,\quad
4 < \tilde B < 5.\]
Hence 
on the interval \([5/2,8/3]\) we have
\[-1 < -\log\left(2\right) < \log\tilde A -\log\tilde B+\log(x) < \log\left(\dfrac{4}{3}\right) < 1,\]
which implies that
\(|\tilde \varphi'(x)|< 1.\)
Thus, the Mean Value Theorem implies that
\[\tilde\varphi(x^\ast)<\tilde\varphi(5/2)+\dfrac{8}{3}-\dfrac{5}{2}=\tilde\varphi(5/2)+\dfrac{1}{6}.\]
However,
\begin{align*}
  20.2398
  &\approx\tilde\varphi(5/2)+\dfrac{1}{6} \\
  &=
  \log{\left( \frac{755755}{2}\right) }+\frac{9829 \log{\left( \frac{9829}{2114}\right) }}{2114}-\frac{1215 \log{\left( \frac{5}{2}\right) }}{1057}+\frac{1956 \log{\left( \frac{1956}{1057}\right) }}{1057}+\dfrac{1}{6} \\
  &< 2 \log{(572)}+\frac{13 \log{\left( \frac{13}{4}\right) }}{2}
  = \varphi(\boldsymbol z^\ast)
  \approx 20.3595
\end{align*}
Hence, when \((d,p)=(13,3)\), the maximum of \(\varphi\) does not occur at boundary points corresponding to the critical points of \(\varphi_0\)
in the interior
of \(K_0\). The other cases of Theorem \ref{thm:small-dp} are resolved similarly: see Appendix~\ref{ss:numerical-unique}.
All that remains is to verify that \(\varphi_0\) does not attain its maximum on the boundary of \(K_0\), since this
implies that
\(x^\ast\) above is the maximum of \(\varphi_0\), and hence corresponds
to the maximum of \(\varphi\) on the boundary of \(K\).


\subsection{The Boundary of $K_0$} \label{ss:boundaryko}

Let $\boldsymbol{a}$ be a point on the boundary of $K_0$.
First we suppose that $a_{p01}, a_{p10}$ are not both equal to $1/2$.
As in Section \ref{ss:Kint-directional-stuff}, we approach different boundary points of \(K_0\)
along line segments contained in the interior of \(K_0\). 
Defining \(a_{j\alpha\beta}\) and \(c_{j\alpha\beta}\)
(with \(j\in[p]\) and \(\alpha\ne\beta\))
as in \ref{ss:Kint-directional-stuff},
we approach
a boundary point \(\boldsymbol{a}\) of \(K_0\) along the line segment parameterised by
\(\boldsymbol{z}(\varepsilon)=\boldsymbol{a}+\varepsilon\boldsymbol{c}\).

Let \(\Delta_0\) be \(\Delta\) after substituting
\(z_{j\alpha\beta}=c_{j\alpha\beta}=0\) for all \(j\in[p]\)
and \(\alpha=\beta\in\{0,1\}\). Then
the directional derivative of \(\varphi_0\) is proportional to \(\Delta_0\).
Since at least one of \(z_{p01}\) and \(z_{p10}\) is bounded away from \(1/2\)
as \(\varepsilon\to0^+\), Lemma \ref{lemma:ABnonzero} implies that \(A_0\) and \(B_0\) are bounded away from
\(0\).
Thus, equation (\ref{eqn:DELTA}) implies that as \(\varepsilon\to0^+\),
\[ \Delta_0=\left(\sum_{(j,\alpha,\beta)\in\mathcal{E}} c_{j\alpha\beta}\right)\log(\varepsilon)+O(1)\]
where \(\mathcal{E}\) is defined as in Section \ref{ss:Kint-directional-stuff}.
Since \(\boldsymbol{a}\) is a boundary point, \(\mathcal{E}\ne\emptyset\), thus,
\(\boldsymbol{a}\) is not a local maximum of \(\varphi_0\).

Suppose now that \(a_{p01}=a_{p10}=1/2\),
so \(a_{j01}=a_{j10}=0\) for $j=0,\ldots p-1$
by (\ref{eqn:implicit-z0ab}). Evaluating \(\varphi_0\) at this point $\hat{\boldsymbol{a}}$ gives
\begin{align*}
  \varphi\big(\boldsymbol{z}^{\ast}\big)-\varphi_0\big(\hat{\boldsymbol{a}}\big)
  &=\left(\frac d2\log\left(\frac d4\right)+2\log\left[2\binom dp\right]\right)
  - \left(\frac d2\log\left(\frac d2\right) + \log\left[2\binom dp\right] \right) \\
  &=\log\left[\binom dp 2^{1-d/2}\right]>0.
\end{align*}
Therefore that \(\varphi \big(\boldsymbol{z}^{\ast}\big)>\varphi _0\big(\hat{\boldsymbol{a}}\big)\), using the fact that
 \(\binom{d}{p}2^{1-d/2} > 1\) for all $(d,p)$ in \eqref{eq:list}.

This proves that the maximum of \(\varphi_0\) does not occur on the boundary of \(K_0\),
which concludes our boundary analysis. In particular, we have proved that the maximum of
\(\varphi\) does not occur on the boundary of \(K\), but at the point \(\boldsymbol{z}^\ast\) from Lemma~\ref{eqn:hard-phi-maximum-lemma},
as required.



\subsection{The omitted calculations for $(d,p)\in \{ (6,1), (10,2), (14,3), (17,4)\}$}\label{ss:numerical-unique}

For completeness we provide all details needed to complete the second moment calculations for the values of $(d,p)$ in (\ref{eq:list}).
Firstly, we establish the condition of Lemma~\ref{lemma:polynomial-criterion}, namely that $f$ has no root in $(1,\infty)$.
Let \(x=w-1\). We have the following Taylor expansions:
{\small
\begin{align*}
  (6,1)&: f(w)/48={{x}^{6}}+6 {{x}^{5}}+15 {{x}^{4}}+20 {{x}^{3}}+14 {{x}^{2}}+4 x, \\
  (10,2)&: f(w)/180=
  11 {{x}^{10}}+110 {{x}^{9}}+487 {{x}^{8}}+1256 {{x}^{7}}+2086 {{x}^{6}}+2324 {{x}^{5}} \\
  &\,\quad\quad\quad\quad\quad+1758 {{x}^{4}}+904 {{x}^{3}}+308 {{x}^{2}}+56 x, \\
  (14,3)&: f(w)/2912=13 {{x}^{14}}+182 {{x}^{13}}+1249 {{x}^{12}}+5524 {{x}^{11}}+17204 {{x}^{10}}+38896 {{x}^{9}}+64284 {{x}^{8}}\\
  &\quad\quad\quad\quad\quad\quad+77088 {{x}^{7}}+65538 {{x}^{6}}+37708 {{x}^{5}}+13442 {{x}^{4}}+2488 {{x}^{3}}+
  180 {{x}^{2}}+24 x, \\
  (17,4)&:f(w)/14280=
  29 {{x}^{17}}+493 {{x}^{16}}+4308 {{x}^{15}}+25180 {{x}^{14}}+107500 {{x}^{13}}+348452 {{x}^{12}} \\
  &\quad\quad\quad\quad\quad\quad\quad+873756 {{x}^{11}}+1708421 {{x}^{10}}+2606175 {{x}^{9}}+3083938 {{x}^{8}}+
  2795936 {{x}^{7}} \\
  &\quad\quad\quad\quad\quad\quad\quad+1903902 {{x}^{6}}+946946 {{x}^{5}}+332816 {{x}^{4}}+80872 {{x}^{3}}+13860 {{x}^{2}}+1540 x.
\end{align*}
}
Since all coefficients are nonnegative, in each case, \(f(w)\) has a unique root on \((1,\infty)\).

\noindent Next, using the notation from
Section~\ref{section:maxima-phi0},
we prove that no root of \(g\) gives a global
maximum of \(\varphi\)
according to the parametrisation 
(\ref{eqn:zjab-irrat}).

\noindent{\bf Case $\boldsymbol{(d,p)=(6,1)}$}.\ Here \(g(x)=24x-120\) and 
  \[5.9269\approx\frac{\log{(140625)}}{2}=\tilde\varphi(4)<\varphi(\boldsymbol{z}^\ast)
  = 2 \log{(12)}+3 \log{\left( \dfrac{3}{2}\right) } \approx 6.1862.\]

\noindent{\bf Case $\boldsymbol{(d,p)=(10,2)}$}.\ Here \(g(x)=990 {{x}^{2}}-720 x-7560\), which has positive root
\[ x^\ast\coloneq\frac{1}{11}\left(2\sqrt{235}+4\right)\approx3.1509\in[3,16/5].\]
  Further, we have
  \[\frac{\theta_1(x)}{\theta(x)}=\frac{2 x( x+8) }{(x+2)( x+14) },\quad
  -\tilde A=\frac{16( {{x}^{2}}+7 x+28) }{( x+2)^2( x+14)^2}.\]
  On the interval \([3,16/5]\), we have $0<-\tilde A<1<\tilde A<2$ and $3<\tilde B<4$, so 
  \[ -1<\log\left(\dfrac34\right)<\tilde\varphi(x)<\log\left(\dfrac{11}6\right)<1.\]
  Thus, by the Mean Value Theorem, $\tilde\varphi(x^\ast)<\dfrac{1}{5}+\tilde\varphi(3)$,
  but 
  \[
    13.5544 \approx
    \tilde\varphi\left(3\right)+\dfrac{1}{5} 
    \varphi(\boldsymbol{z}^\ast)
    \approx13.5811.
  \]

\noindent{\bf Case $\boldsymbol{(d,p)=(14,3)}$}.\
Here \(g(x)=18928 {{x}^{3}}+96096 {{x}^{2}}-240240 x-480480\). Since the sequence
  of coefficients of \(g\) has exactly one
  sign change, there is a unique positive root \(x^\ast\).
  Moreover, \(x^\ast\in [25/9,26/9]\) since
  \(g(25/9)=-465920/729<0\) and \(g(26/9)=61111232/729>0\).
  Further, we have
  \[\frac{\theta_1(x)}{\theta(x)}=\frac{3 x( {{x}^{2}}+22 x+55) }{{{x}^{3}}+33 {{x}^{2}}+165 x+165},\quad
  -\tilde A=
  \frac{33(x^4 + 20x^3 +210x^2 + 660x + 825) }{(x^3 + 33x^2 + 165x + 165)^2}.\]
  On the interval \([25/9,26/9]\), we have $0<-\tilde A'<1<\tilde A<2$ and $5<\tilde B<6$, so 
\[  -1<\log\left(\dfrac{25}{54}\right)<\tilde\varphi'(x)<\log\left(\dfrac{52}{45}\right)<1.\]
  Therefore, by the Mean Value Theorem, $\tilde\varphi(x^\ast)<\tilde\varphi\left(\dfrac{25}9\right)+\dfrac19$.
  However,
  \[
    21.9072 \approx\tilde\varphi\left(\dfrac{25}{9}\right)+\dfrac{1}{9} \\
    <
    \varphi(\boldsymbol{z}^\ast) \approx 21.9499.
  \]
  
\noindent{\bf Case $\boldsymbol{(d,p)=(17,4)}$}.\  In this case, \(g(x)=207060 {{x}^{4}}+2598960 {{x}^{3}}+1856400 {{x}^{2}}-16336320 x-15315300\), and
  by Descartes' Rule of Signs, it has a unique positive root \(x^\ast\) which lies in the interval
  \([7/3,5/2]\). We have
  \begin{align*}
    \frac{\theta_1(x)}{\theta(x)}&=
  \frac{4 x( x^3 + 39x^2 + 234x+286) }{x^4 + 52x^3 + 468x^2 + 1144x + 715}, \\
  -\tilde A'&=
  \frac{52(x^6 + 36x^5 +666x^4 + 4796x^3 + 16731x^2 + 25740x + 15730) }{( x^4 + 52x^3 + 468x^2 + 1144x + 715)^2}.
  \end{align*}
  On the interval \([7/3,5/2]\), we have $0<-\tilde A'<1$,\, $2<A<3$ and  $5<B<7$. Hence
  \[
  -1<\log\left(\dfrac23\right)<\tilde\varphi(x)'<\log\left(\dfrac32\right)<1.\]
  Therefore, by the Mean Value Theorem, $\tilde\varphi(x^\ast)<\tilde\varphi\left(\dfrac{5}{2}\right)+\dfrac{1}{6}$, so
  \[
    29.1992 \approx\tilde\varphi\left(\dfrac{5}{2}\right)+\dfrac{1}{6} 
    <
    \varphi(\boldsymbol{z}^\ast) \approx 29.2348.
  \]

\end{document}